\documentclass[oneside,10pt,final]{amsart}
\makeatletter
\@namedef{subjclassname@2020}{%
  \textup{2020} Mathematics Subject Classification}
\makeatother
\usepackage{tikz-cd}
\usepackage[notcite,notref]{showkeys}
\usepackage[a4paper,width=160mm,top=27mm,bottom=27mm]{geometry}
\usepackage{cases}
\usepackage{amsmath,amssymb}
\usepackage{comment}
\usepackage{xcolor}
\usepackage[hidelinks]{hyperref}

\newtheorem{theorem}{Theorem}[section]
\newtheorem{lemma}[theorem]{Lemma}
\newtheorem{definition}[theorem]{Definiton}
\newtheorem{proposition}[theorem]{Proposition}
\newtheorem{corollary}[theorem]{Corollary}

\theoremstyle{definition}
\newtheorem{remx}[theorem]{Remark}
\newenvironment{remark}
  {\pushQED{\qed}\remx}
  {\popQED\endremx}

\newcommand{\PP}{\mathbb{P}}

\newcommand{\R}{\mathbb{R}}
\newcommand{\C}{\mathbb{C}}

\newcommand{\abs}[1]{\lvert#1\rvert}

\numberwithin{equation}{section}

\begin{document}
\address{Filone G. Longmou-Moffo
\newline \indent African Institute for Mathematical Sciences(AIMS-Rwanda)
\newline \indent 17 KN 16 Ave, Nyarugenge District |Kigali, Rwanda
\newline\indent P.O Box,7150, Kigali, Rwanda.
\newline\indent Université Cheikh Anta Diop (UCAD)
\newline\indent Dakar, Sénégal}
\email{longmoumoffofilonegilson@gmail.com, gilson.longmou@aims.ac.rw}

\address{Mouhamadou Sy
\newline \indent African Institute for Mathematical Sciences(AIMS-Rwanda)
\newline \indent 17 KN 16 Ave, Nyarugenge District |Kigali, Rwanda
\newline\indent P.O Box,7150, Kigali, Rwanda}
\email{mouhamadous314@gmail.com, mouhamadou.sy@aims.ac.rw}

\title[NLS with exponential nonlinearity on compact surfaces]{NLS with exponential nonlinearity on compact surfaces}
\author{Filone G. Longmou-Moffo and Mouhamadou Sy}

\begin{abstract}
In this paper, we establish a probabilistic global theory (existence for all times, uniqueness and continuity) in $H^1$ for the NLS equation with a Moser-Trudinger  nonlinearity posed on compact surfaces. This equation is known to be the two dimensional counterpart to the classical energy-critical Schrödinger equations \cite{CollianderIbrahimMajdoubMasmoudi2009}. The authors of \cite{CollianderIbrahimMajdoubMasmoudi2009} also identified a trichotomy around the criticality of the equation based on the size of the total energy. In particular, for supercritical regimes (large energy), the equation is known to exhibit instabilities : the (uniform) continuity of the flow fails to hold. Large data distributional non unique probabilistic solutions have been obtained in \cite{CasterasMonsaingeon2024}. The setting of \cite{CasterasMonsaingeon2024} does not handle  the uniqueness issue for the $H^1$-data and therefore could not define a flow for this regularity. Our main focus here is to build a single probabilistic framework that provides both existence, uniqueness, and continuity with respect to the initial data in $H^1$. Our uniqueness and continuity are based on the so-called Yudovich argument \cite{Judovic1963}, and the probabilistic estimates are derived through the IID limit procedure \cite{Sy2019}. Beyond the difficulties related to the borderline nature of the context, the major challenge resides in the need to satisfy two features that tend to play against each other : obtaining both continuity property of the flow and large data in the support of the reference measure. This made the design of the dissipation operator inherent in the method, as well as the analysis of the resulting quantities, particularly difficult. Regarding the supercritical regime identified in \cite{CollianderIbrahimMajdoubMasmoudi2009}, we show that a modified energy, with regularity similar to the original total energy, admits values as high as desired, suggesting that the constructed set of data contains supercritical ones.
\end{abstract}
\bigskip

\keywords{Energy supercritical NLS, compact manifolds, invariant measure, GWP, long-time behavior}
\subjclass[2020]{35A01, 35Q55, 35R11, 60H15, 37K06, 37L50.}

\maketitle


\section{Introduction}
\subsection{Context}In this present work, we consider the nonlinear Schr\"odinger equation (NLS) with Moser-Trudinger nonlinearity
\begin{equation}\label{Intro_Trudinger}
    \partial_tu-i\left(\Delta u-(e^{\beta\abs{u}^2}-1)u\right)=0,\quad \quad (u(t,x),t,x)\in \mathbb{C}\times \mathbb{R}\times{M}
\end{equation}
supplemented with an initial condition
\begin{equation}\label{Intro_IC}
    u|_{t=0}=u_0\in H^1(M)
\end{equation}
where $M=(M,g)$ is a compact Riemannian manifold of dimension $2$, and $H^s(M)$ is the Sobolev space of order $s\in\R$ defined on $M$. The operator $\Delta$ is the classical Laplace-Beltrami operator associated to $g$ and $\beta>0$ is a fixed parameter.\\
 When $M$ has boundary, which will be assumed to be smooth enough, we will supplement \eqref{Intro_Trudinger}, \eqref{Intro_IC} with the Dirichlet condition
\begin{equation}
    u|_{\partial M}=0.
\end{equation}    
This equation was introduced in the context of nonlinear optics in \cite{LamLippmannTappert1977} to describe a self-focusing laser beam whose radius is much larger than the vacuum wavelength. This equation also has several applications, notably in quantum mechanics and fluid dynamics, where it is used to describe, respectively, the evolution of self-interacting quantum states in the critical two-dimensional regime and the dynamics of nonlinear waves as well as the concentration of coherent structures such as vortices in two-dimensional flows.
Interpreting the exponential nonlinearity with coefficient $\beta$ as a power series expansion, namely
\[
(e^{\beta |u|^{2}}-1) u = \sum_{k=1}^{\infty} \frac{\beta^{k}}{k!} |u|^{2k} u,
\]
one observes that the equation can be regarded as involving a polynomial nonlinearity of infinite degree. Consequently, the associated critical Sobolev exponent corresponds to the limit case $k = \infty$, leading to the scaling-critical regularity
\[
s_c =\frac{d}{2}.
\] 
From a heuristic point of view, this number indicates the level of regularity above which one typically expects local well-posedness of \eqref{Intro_Trudinger}. In other words, local well-posedness of \eqref{Intro_Trudinger} is generally anticipated for initial data in $H^s, s>s_{c}$ which corresponds to subcritical regime. However for $s=s_c$ (critical regime), irregularities may appear, while for $s<s_c$ (supercritical regime), the equation is expected to be illposed.
Smooth enough solutions of \eqref{Intro_Trudinger} admits the two following conservation laws:
\begin{align}
    M(u) &=\frac{1}{2}\int_{M}|u(t,x)|^2dx, &\text{(Mass)}\\
    E(u) &=\frac{1}{2}\int_{M}|\nabla u(t,x)|^2dx+\frac{1}{2\beta}\int_{M}\left(e^{\beta|u(t,x)|^2}-1-\beta|u(t,x)|^2\right) dx. &\text{(Energy)}
\end{align}
The Cauchy problem \eqref{Intro_Trudinger}, \eqref{Intro_IC} is highly challenging from a mathematical  point of view when the initial data is considered in  $H^1(M)$, where $M$ is of dimension $2$; this indeed corresponds to a double critical regime. Serious obstructions appear in both local and global analysis. Indeed, the critical nature of the regime precludes the control of the exponential nonlinearity in $H^1$, due to the absence of the Sobolev embedding $H^1\hookrightarrow L^\infty$. This lack of embedding obstructs the construction of local solutions to \eqref{Intro_Trudinger} via standard fixed-point arguments and the Duhamel formulation. Moreover, the fact that the nonlinearity is energy-critical prevents the potential part of the energy from being controlled by the kinetic part for large initial data. Consequently, it is impossible to guarantee that any local solution, should it exist, remains bounded in $H^1$
 solely through energy conservation.  Apart from the fact that the notion of local well-posedness becomes extremely delicate to handle in this regime, another major difficulty is that it completely breaks down for large initial data. Indeed, in \cite{CollianderIbrahimMajdoubMasmoudi2009}, Colliander, Ibrahim, Majdoub and Masmoudi investigated this equation on $\mathbb{R}^2$ under the restriction $\beta = 4\pi$ (corresponding to the optimal constant in the Moser-Trudinger inequality). After introducing the notions of subcritical, critical, and supercritical Cauchy problems corresponding respectively to the cases $E(u_0) < \frac{1}{2}$, $E(u_0) = \frac{1}{2}$, and $E(u_0) > \frac{1}{2}$ they established global well-posedness results in $H^1$ for both the subcritical and critical regimes. In contrast, in the supercritical case $E(u_0) > \frac{1}{2}$, they exhibited strong instability via a loss of uniform continuity of the solution, which ultimately leads to the failure of well-posedness theory for large initial data. From a physical standpoint, such instability might indicate that the model loses its predictive power at high energy levels: in this regime, arbitrarily small uncertainties in the initial state can give rise to dramatic deviations in the evolution, suggesting that the equation no longer provides a "computable" description of the underlying dynamics beyond the critical energy threshold. Nonetheless, adopting a probabilistic perspective offers a complementary and powerful insight. By endowing the space of initial data with appropriate probability measures, one can often show that the configurations responsible for such instabilities are of negligible probability. In this sense, the equation may remain well-posed almost surely, thereby restoring a form of predictive validity within a probabilistic framework. This approach not only overcomes the limitations imposed by deterministic instabilities but also highlights the effectiveness of probabilistic methods in validating and extending the applicability of nonlinear models in regimes where classical deterministic analysis fails.\\
 In \cite{CasterasMonsaingeon2024}, Casteras and Monsaingeon  established existence of probabilistic solutions of \eqref{Intro_Trudinger} for large size of data relying on arguments involving invariant measures and the IID limit method (see below). Although this work ensures existence, it left open the uniqueness problem and the continuity issue listed above in \cite{CollianderIbrahimMajdoubMasmoudi2009}, which the authors described as very delicate problem that could potentially fail. This prevented them from being able to properly define a flow. In what follows, we review some recent developments concerning this equation and aim to address the aforementioned issues in order to construct well-behaved global, unique and continuous solutions in $H^1$ while extending over compact surfaces and without usual technical restrictions on the parameter $\beta$ of nonlinearity. 
 
\subsection{Background and earliers results}
The equation of interest has been extensively studied in \cite{CollianderIbrahimMajdoubMasmoudi2009,IbrahimMajdoubMasmoudiNakanishi2012,Nakamura2015,NakamuraOzawa1998,WangHudzik2007,Cazenave1979} in the Euclidean space $\mathbb{R}^2$ with $\beta=4\pi$, particularly in \cite{CollianderIbrahimMajdoubMasmoudi2009}, the authors  Colliander, Ibrahim, Majdoub and Masmoudi have proven the global existence result for small data. Their approach relied on a fixed-point argument, combined with the Moser-Trudinger inequality, Brézis-Gallouët-type logarithmic and Strichartz estimates. In fact the Strichartz estimate \cite{BurqGerardTzvetkov2004,KeelTao1998,Strichartz1977} \[
\| e^{it\Delta}f \|_{L^r_t(\R;L^p_x(\mathbb{R}^2))} \leq C \| f \|_{L^2_x(\mathbb{R}^2)}, \quad \text{with} \quad r \geq 2 \quad\text{and $(r,p)\neq (2,\infty)$}\quad\text{satisfying}\ \ \frac{2}{r}+\frac{2}{p}=1;
\]
help to control the $L^4C^{\frac{1}{2}}$ norm of $e^{it\Delta}f$ by the $H^1$ norm of $f$ which allows to perform the contraction argument in $X(T) = C([0,T]; H^1(\mathbb{R}^2)) \;\cap\; L^4([0,T]; C^{1/2}(\mathbb{R}^2))
$. The difficulty here is to control the $L^{\frac{4}{3}}_TL^4_x$ norm of $e^{(4\pi+\epsilon)|u|^2}-1$ which can be handle using the estimate:
\begin{align}
\left\| e^{4\pi(1+\varepsilon)|u(t,\cdot)|^2} - 1 \right\|_{L^{4/3}_T(L^4_x)}
&\le
\left\| e^{3\pi(1+\varepsilon)||u(t,\cdot)||_{L^\infty}^2} \right\|_{L^\frac{4}{3}_T}
\;
\left\| e^{4\pi(1+\varepsilon)|u(t,\cdot)|^2} - 1 \right\|_{L^\infty_T(L^1_x)}^{1/4}.
\end{align}
The first term can be controlled by using the Brézis–Gallouët-type logarithmic estimate \cite{IbrahimMajdoubMasmoudi2007}:\begin{align}
\|u\|_{L^\infty}^2 \;\le\; \lambda \, \|u\|_{H^1}^2 \,
\log\Biggl( C_\lambda + \frac{\sqrt{8} \, \|u\|_{C^\frac{1}{2}}}{ \, \|u\|_{H^1}} \Biggr) ~\text{for any}~ \lambda>\frac{1}{\pi}, u\in H^1(\mathbb{R}^2)\cap C^\frac{1}{2}(\mathbb{R}^2).
\end{align}
while the second is handled by using the Moser-Trudinger inequality \cite{AdachiTanaka1999,Moser1971,Trudinger1967}:
\begin{align}
 \sup_{u\in H^1,\lVert\nabla u\rVert_{L^2}\leq 1 }\|e^{\beta |u|^2} - 1 \|_{L^1(\mathbb{R}^2)} \;\le\; C_\beta \, \|u\|_{L^2(\mathbb{R}^2)}^2 ~\text{for}~\beta\in[0,4\pi)~
\end{align}
The use of the Moser-Trudinger inequality here conditions this local theory for restricted size data $E(u_0)\leq \frac{1}{2}$. For subcritical initial data, the local theory can be extended globally rather straightforwardly by using energy conservation and the fact that the local existence time depends only on 
$\lVert u_0\rVert_{L^2}^2$ and \(\eta\), with \(\eta\) such that 
\(\|\nabla u_0\|_{L^2}^2 < 1 - \eta\) while
in the critical case, the situation is more delicate, but it can be circumvented by employing a finite-time concentration phenomenon argument.
 For supercritical initial data, it has been proven that the equation is highly unstable: infinitesimal differences in the initial state can lead to large deviations in arbitrarily short times. This demonstrates a fundamental loss of continuity and the breakdown of classical local well-posedness.
The same authors demonstrated in \cite{IbrahimMajdoubMasmoudiNakanishi2012} that scattering holds for global solutions arising from small initial data, with $\beta = 4\pi$. However, in the compact manifold, scattering cannot occur, so we can ask what the long time behavior of the flow looks like. To answer these questions, several studies have been developed using probabilistic approaches.\\\\
The pioneering work in this direction has been done by Bourgain \cite{Bourgain1994,Bourgain1996}
in the context of 1-D NLS.The principal obstacle to establishing global solutions at such low levels of regularity lies in the absence of conventional conservation laws. To overcome this difficulty, Bourgain employed the Gibbs measure constructed by Lebowitz, Rose, and Speer \cite{LebowitzRoseSpeer1988}, which is supported on $H^{1/2-}$, to derive uniform bounds for certain $N$-dimensional Galerkin projections of the equation projections that had already played a role in the construction of the measure. Crucially, these bounds were independent of the dimension $N$. By carefully comparing the full infinite-dimensional problem with its finite-dimensional approximations, Bourgain succeeded in proving global existence of solutions. In essence, the use of invariant measures enables the persistence of conservation laws at low regularity, albeit in a statistical or probabilistic sense.\\
In the fractional version of our model \eqref{Intro_Trudinger}, the Gibbs measure was constructed by Robert \cite{Robert2024InvariantGibbs}. He demonstrated that the measure is well-defined for $\alpha > \frac{d}{2}$ and is supported on $H^{(\alpha-\frac{d}{2})^-}$. Subsequently, this Gibbs measure was employed to construct global solutions. Specifically, for $\alpha > \frac{d}{2}$, $0 < \beta < \beta_0$, and $0 < s < \alpha - \frac{d}{2}$, he established the existence of a random variable taking values in $C(\mathbb{R},H^s)$ that solves the equation. Moreover, in the regime $\alpha > d$, $0 < \beta < \beta_0$, and $\frac{d}{2} < s < \alpha - \frac{d}{2}$, the local well-posedness results allowed him to extend the local flow globally and to prove that the Gibbs measure is invariant under this flow. Naturally, this framework cannot address the case $\alpha = \frac{d}{2}$, which is the focus of the present work. A key limitation of this Gibbs measure-based approach is that its support lies in a rough function space, resulting in a lack of spatial regularity.\\\\
An alternative framework for the construction measures is provided by the fluctuation–dissipation approach, introduced by Kuksin \cite{Kuksin2004Eulerian} and further developed with Shirikyan \cite{KuksinShirikyan2004}. This method relies on adding a suitable stochastic perturbation to the equation designed to enhance dissipation and to admit a stationary measure for each fixed viscosity. Passing to the vanishing viscosity limit via compactness arguments then leads to invariant measures for the original deterministic dynamics which can subsequently be exploited to establish long-time existence results. We refer to \cite{FoldesSy2021,KuksinShirikyan2012,Latocca2022InvariantEuler,Sy2018BenjaminOno} for further developments along these lines. Although this method generally does not encounter regularity issues of the support, it is limited by a lack of time integrability.\\\\
To overcome these challenges, M. S. \cite{Sy2019} developed a method referred to as the IID limit, which combines the fluctuation–dissipation approach with Gibbs measure techniques. The central idea is to apply the fluctuation–dissipation method to finite-dimensional Galerkin approximations in order to construct a stationary measure and subsequently pass to the inviscid limit. The limiting measure is carefully designed to be invariant for the approximated deterministic system, while the dissipation is chosen to provide uniform estimates that enable the implementation of Bourgain’s globalization procedure in regimes where local theory is well-posed. In less favorable regimes, compactness arguments combined with Skorokhod’s theorem are employed to construct probabilistic solutions . We refer to \cite{gueye2025probabilisticglobalwellposednessenergysupercriticalschrodinger,Sy2019,SyYu2020,SyYu2021Fractional,CasterasMonsaingeon2024}
for related results. In \cite{CasterasMonsaingeon2024} Casteras and Monsaingeon have studied the fractionnal case of our model \eqref{Intro_Trudinger} by employing this IID limit method. They constructed invariant measures and established global-in-time solutions in regimes of strong regularity, namely for $\alpha \leq \frac{d}{2}$ and $s > \frac{d}{2}$. In contrast, in less favorable regularity, specifically when $\alpha \leq 1$ and $s \leq \min\big(\frac{d}{2},1+\alpha\big)$, they proved the existence of probabilistic solutions. Although these latter solutions concern large size of data, they lack sufficient regularity belonging only to $C(\mathbb{R},H^{s-\alpha})$ and raise a serious issue of uniqueness, which may in fact fail.\\
In this paper, we address this uniqueness problem for the model \eqref{Intro_Trudinger} and go further by also addressing the continuity issue with respect to the initial data in $H^1$ listed above in \cite{CollianderIbrahimMajdoubMasmoudi2009}. To overcome these problems, we use a valuable argument from Yudovich developed in \cite{Judovic1963} in the context of 2-D Euler (for further use of the method, one can refer to Foldes and Sy \cite{FoldesSy2021} in the context of 2-D SQG and by Burq, Gerard, and Tzvetkov \cite{BurqGerardTzvetkov2004} in the context of 3-D cubic NLS.) Below, we present a detailed exposition of the original Yudovich argument.
\subsection{Yudovich argument for the uniqueness problem of the 2D Euler}
A deep understanding of the behavior of incompressible fluids particularly through the lens of the Euler equations is a cornerstone of both fluid mechanics and the analysis of nonlinear partial differential equations. In two space-dimensions, the structure of these equations becomes especially amenable to mathematical investigation due to the conservation of vorticity. However, once one moves beyond the realm of smooth solutions, the study of weak solutions raises profound questions of well-posedness, and in particular of uniqueness.
The famous Yudovich argument\cite{Judovic1963} allows to achieve this by a quite elegant fashion. This powerful technique plays the role of a substitute for the classical Gronwall inequality, particularly adapted for such borderline situations.\\
Yudowich’s analysis focuses on the two-dimensional Euler equations in stream function form:
\[
\partial_t \Delta\psi + \frac{\partial\psi}{\partial x_2}\frac{\partial \Delta\psi}{\partial x_1}- \frac{\partial\psi}{\partial x_1}\frac{\partial \Delta\psi}{\partial x_2} = -\text{curl}~f(t,x), \quad  \psi|_{\partial\Omega}=0,~~~\psi(0,x)=\psi_0(x),\quad u = \nabla^\perp \psi~~ \text{and}~~ \text{curl}~ u=-\Delta\psi.
\]
where \( \psi \) denotes the stream function, \( u \) is the velocity field, and $w=\text{curl}~ u=-\Delta\psi$ is the vorticity. \\
Let $\psi_1,\psi_2$ be two solutions starting at $\psi_0$, by taking the difference of theses two solutions $\psi=\psi_1-\psi_2$ and the inner product with $\psi$,{while using Green's formula and the homogeneous Dirichlet boundary condition}, we have :
\begin{align}
    -\int_{\Omega}|\nabla\psi(x,t)|^2dx+\int_0^t\int_\Omega\left[-\Delta\psi\frac{\partial\psi}{\partial \tau}+\Delta\psi\left(\frac{\partial\psi_1}{\partial x_1}\frac{\partial\psi}{\partial x_2}-\frac{\partial\psi_1}{\partial x_2}\frac{\partial\psi}{\partial x_1}\right)\right]dxd\tau=0
\end{align}
By using the integration by parts, we obtain:
\begin{align}
    \frac{1}{2}\int_\Omega|\nabla \psi|^2dx=\int_0^t\int_\Omega\left[\frac{\partial\psi_1}{\partial x_1\partial x_2}\left(\left(\frac{\partial\psi}{\partial x_1}\right)^2-\left(\frac{\partial\psi}{\partial x_2}\right)^2\right)+\left(\frac{\partial^2\psi_1}{\partial x_2^2}-\frac{\partial^2\psi_1}{\partial x_1^2}\right)\frac{\partial\psi}{\partial x_1}\frac{\partial\psi}{\partial x_2}\right]dxd\tau
\end{align}
At this stage, with the aim of applying Gronwall’s lemma, we write
\begin{align}\label{Gronwall Euler }
    \frac{1}{2}\int_\Omega|\nabla \psi|^2dx\leq 4\int_0^t\int_\Omega |D^2\psi_1 | |\nabla \psi|^2dxd\tau.
\end{align}
However, in the context of interest,  $D^2\psi_1$ does not belong to $L^1_tL^\infty_{x}$. This not only prevents the nonlinearity  from being Lipschitz monotone but also makes Gronwall’s lemma inapplicable due to this lack of regularity, which constitutes the main difficulty in the analysis. Nevertheless,
by using the $L^\infty$ conservation of the vorticity, one has $\Delta\psi_1\in L^\infty_{t,x}$. This suggests that Gronwall’s lemma could apply if we could expect an estimate of the form $\left\lVert \frac{\partial^2\psi_1}{\partial_ {x_j}\partial_{x_k}}\right\rVert_{L^\infty_x}\leq C \lVert \Delta \psi_1\rVert_{L^\infty_x} \quad\forall j,k=1,2$; which is not the case. However, by the theory of Calderón–Zygmund operators (see lemma 2.2 in \cite{Judovic1963}), one has $\left\lVert \frac{\partial^2\psi_1}{\partial_ {x_j}\partial_{x_k}}\right\rVert_{L^p_x}\leq C ~p\lVert \Delta \psi_1\rVert_{L^\infty_x}$  $\forall p<\infty$ and $\frac{\partial\psi}{\partial x_1},\frac{\partial\psi}{\partial x_2}$ are bounded. \\
Yudovich’s argument precisely consists in bypassing this lack of regularity. Indeed, our situation lies at the boundary between an  available a priori estimate and the kind of estimate we actually need to control.
 The idea is to slightly relax the problem by introducing a small parameter $\varepsilon\to 0$, in order to make it possible to apply the available a priori estimate, even at the cost of losing the linear Gronwall estimate above, and replacing it with a new nonlinear Osgood-type estimate depending on $\varepsilon$, which converges back to the linear Gronwall estimate as $\varepsilon\to 0$. 
 To this end, setting $z^2(t)=\int_\Omega|\nabla\psi|^2dx$ and applying the Hölder inequality while using the boundedness of $\nabla\psi$ yields:
\begin{align*}
    \frac{d\,z^2}{dt}&\leq 8M^{\varepsilon}\int_\Omega\left|D^2\psi_1\right|\left|\nabla\psi\right|^{2-\varepsilon}dx
    \leq 8 M^{\varepsilon} \left\lVert D^2\psi_1\right\rVert_{L^{\frac{2}{\varepsilon}}}z^{2-\varepsilon}
 \leq M^\varepsilon C ~{\tfrac{2}{\varepsilon}}~M_1z^{2-\varepsilon}.
\end{align*}
Hence $z(t)\leq M(2CM_1t)^\frac{1}{\varepsilon}$. Then, letting $\varepsilon\to 0$ for sufficiently small $t\leq t_0$, we get $z(t)=0$ and by repeating the same argument step by step , we obtain $z(t)=0 \quad\forall t\geq 0$ which conclude the uniqueness.

\subsection{Problematic, Methodology and Exposition of the main result}

{
Let us set
\[
u=e^{-it}v.
\]
We see that if \(v\) solves \eqref{Intro_Trudinger}, then \(u\) solves the following equation:
\begin{equation}\label{equation without zero frequency}
\partial_t u
=
i\left((\Delta-1)u-(e^{\beta|u^2|}-1)u\right).
\end{equation}
This formulation is more appropriate for dealing with the zero frequency.
Its energy is given by
\begin{equation}
E(u)
=
\int_{M}
\left(
\frac12 |u|^2
+
\frac12 |\nabla u|^2
+
\frac{1}{2\beta}\left(e^{\beta|u|^2}-1-\beta|u|^2\right)
\right)\,dx.
\end{equation}
}
Let us formally present the major questions involved in the present work :
\begin{enumerate}
    \item Dropping the restriction on the parameter $\beta$ ;
    \item The uniqueness problem in the context of probabilistic settings for $H^1$ data is unknown. A major aspect of our work is to establish such uniqueness ;
    \item {Addressing the continuity issue and the problem about the size of data: As established by deterministic arguments in \cite{CollianderIbrahimMajdoubMasmoudi2009}, the equation displays a significant instability phenomenon. More precisely, the flow map is not uniformly continuous in $H^1$
 for initial data of large energy, namely when $E(u_0)>1$}. Such tension between continuity (not necessarily uniform) and large data property will be at the heart of the design of our dissipation operator. Namely, it has to ensure both properties at the same time. 
\end{enumerate}

We firstly present a general framework of Yudovich's argument for PDEs and we then employ the IID limit method outlined above, highlighting the delicate and technically demanding construction of the dissipation mechanism. As this stage of the work is primarily exploratory, the full details of this construction are generally omitted, but it is one of the most tricky steps  in this method. Once the Galerkin approximation of the equation is considered, once perform the following dissipation that will be used in the fluctuation-dissipation approximation :For any $u\in E_N$
{\begin{align}\label{dissipation}
    \mathcal{L}(u)= Ce^{\gamma C_1\lVert P_N(e^{\beta\abs{u}^2}u)\rVert_{L^2}^2}\left(u+P_N(e^{\beta\abs{u}^2}u)\right)+C_2P_N(e^{\gamma\abs{u}^2}u)+P_N\left((1-\Delta)^{-\frac{1}{2}}\left(|(1-\Delta)^{\frac{1}{2}}u|^\delta(1-\Delta)^{\frac{1}{2}}u    \right)\right).
\end{align}}
The dissipation must provide sufficient regularity and integrability to the system to produce crucial estimates. In a certain extent, these estimates will survive the passage to the limits, and are instrumental for:
\begin{itemize}
    \item 
Constructing solutions using the invariant measure and Skorokhod’s theorem.

\item Controlling the nonlinearity to establish global existence.
\item Ensure that large size of data are included.
\item Allowing the use of Yudovich’s argument to prove uniqueness and continuity with respect to the initial data.
\end{itemize}
{In fact the role of the term $u$ in dissipation is to enable the  construction of an invariant measure supported in $H^1$
 , through the dissipative energy. The second term $P_N(e^{\beta|u|^2}u)$ ensures control of the system's non-linearity, which allows to establish the global existence of the solution. The third term $P_N(e^{\gamma|u|^2}u)$ is crucial in ensuring uniqueness by allowing the use of  Yudovich's argument. Indeed, through the dissipative energy, we have the control $\sum_{p\in\mathbb{N}}\frac{\gamma^p}{p!}\lVert \nabla u^{p+1}\rVert_{L^2}^2$, which is crucial to control\\ $\lVert\nabla(e^{\beta|u|^2}-1)\rVert_{L^2}$. The term $e^{\gamma C_1\lVert P_N(e^{\beta|u|^2}u)}\rVert_{L^2}^2$ manages the interaction between $P_N(e^{\beta|u|^2}u)$  and $P_N(e^{\gamma|u|^2}u)$, in $\langle P_N(e^{\beta|u|^2}u),P_N(e^{\gamma|u|^2}u)  \rangle$. The last term $P_N\left((1-\Delta)^{-\frac{1}{2}}\left(|(1-\Delta)^{\frac{1}{2}}u|^\delta(1-\Delta)^{\frac{1}{2}}u    \right)\right)$ is crucial in producing a control of the term $\lVert u\rVert_{W^{1,2+\delta}}^{2+\delta}$ which allows to bound the term $\lVert \nabla((e^{\beta|u|^2}-1)u)\rVert_{L^2}$ by applying the Kato-Ponce inequality and to show by using the Duhamel's formulation that the solution belongs to $C(\mathbb{R},H^1)$, and is continuous with respect to the initial data in $H^1$ via Yudovich's argument}. 
Moreover, if we focus on the estimate gained from the evolution of the mass functional, one uniformly controls the following finite dimensional quantity (the dissipation of the mass):
\begin{align*}
\mathcal{M}_N(u_N)&={Ce^{\gamma C_1\lVert P_N(e^{\beta\abs{u_N}^2}u_N)\rVert_{L^2}^2}}\lVert u_N\rVert_{L^2}^2+{Ce^{\gamma C_1\lVert P_N(e^{\beta\abs{u_N}^2}u_N)\rVert_{L^2}^2}}\sum_{p\in\mathbb{N}}\frac{\beta^p}{p!}\lVert u_N\rVert_{L^{2p+2}}^{2p+2}\\
&\quad \quad
+C_2\sum_{p\in \mathbb{N}}\frac{\gamma^p}{p!}\lVert u_N\rVert_{L^{2p+2}}^{2p+2}+\left\langle u_N,P_N\left((1-\Delta)^{-\frac{1}{2}}\left(|(1-\Delta)^{\frac{1}{2}}u_N|^\delta(1-\Delta)^{\frac{1}{2}}u_N    \right)\right)\right\rangle.
\end{align*}
The dissipation of the mass can morally be considered as a modified energy, as it involves terms that have similar regularity as the total energy. We then define :
\begin{itemize}
    \item Modified potential energy (finite-dimensional)
    \begin{align}
        \mathcal{V}_N(u_N)&={Ce^{\gamma C_1\lVert P_N(e^{\beta\abs{u_N}^2}u_N)\rVert_{L^2}^2}}\lVert u_N\rVert_{L^2}^2+{Ce^{\gamma C_1\lVert P_N(e^{\beta\abs{u_N}^2}u_N)\rVert_{L^2}^2}}\sum_{p\in\mathbb{N}}\frac{\beta^p}{p!}\lVert u_N\rVert_{L^{2p+2}}^{2p+2}\\
&\quad \quad
+C_2\sum_{p\in \mathbb{N}}\frac{\gamma^p}{p!}\lVert u_N\rVert_{L^{2p+2}}^{2p+2}
\end{align}
    \item  Modified kinetic energy (finite-dimensional)
    \begin{align*}
    \mathcal{K}_N(u_N)&=\left\langle u_N,P_N\left((1-\Delta)^{-\frac{1}{2}}\left(|(1-\Delta)^{\frac{1}{2}}u_N|^\delta(1-\Delta)^{\frac{1}{2}}u_N    \right)\right)\right\rangle=\langle u_N,\, g_{\delta,N}\rangle.
\end{align*}
\end{itemize}
Exploiting the dissipation of the energy, which has a fundamental role to ensure compactness, on can pass to the infinite-dimensional limit to obtain the following quantities :
\begin{itemize}
  \item Modified potential energy (explicit in $u$)
    \begin{align}
        \mathcal{V}(u)&={Ce^{\gamma C_1\lVert e^{\beta|u|^2}u\rVert_{L^2}^2}}\lVert u\rVert_{L^2}^2+{Ce^{\gamma C_1\lVert e^{\beta|u|^2}u\rVert_{L^2}^2}}\sum_{p\in\mathbb{N}}\frac{\beta^p}{p!}\lVert u\rVert_{L^{2p+2}}^{2p+2}
+C_2\sum_{p\in \mathbb{N}}\frac{\gamma^p}{p!}\lVert u\rVert_{L^{2p+2}}^{2p+2}.\label{Modif_Pot_En}
\end{align}
    \item  Modified kinetic energy (defined only as a random variable : $\Omega:\to\R$)
    \begin{align*}
    \mathcal{K}^\omega&=\langle u^\omega,\, g_{\delta}^\omega\rangle,\label{Modif_Kin_En}
\end{align*}
where $g_\delta^\omega$ is a random variable $\Omega\to L^2$, defined as  the limit (w.r.t. $L^2$-norm) of $g_{\delta,N}$ (this limit is guaranteed to exist by estimates on the energy dissipation, although we fail to explicitly express it in terms of $u$).
\item  Modified total energy (as a random variable)
\begin{align}
    \mathcal{M} &= \mathcal{K}+\mathcal{V}.
\end{align}
\end{itemize}

Here is our main result.
\begin{theorem}
    Let $M$ be a compact  surface endowed with a Riemannian metric, let $\beta>0, ~\delta\in(0,1)$ and $\gamma >4\beta$.\\ There exist a probability space $(\Omega,\mathcal{F},\mathbb{P})$, a probability measure $\mu \in P(L^2)$, a set $\Sigma\subset H^1$ and a stochastic process $u$ valued in $C(\mathbb{R},H^1)\cap L^{2+\delta}_{loc}W^{1,2+\delta}$ such that:
    \begin{itemize}
    \item $\mu(\Sigma)=1$.
    \item $\mu$ is a invariant law for $u(t)$.
        \item $u=u^\omega$ is a global strong solution of \eqref{equation without zero frequency} for $\mathbb{P}$ a.a ~~ $\omega\in \Omega$.
        \item $u$ satisfies a uniqueness property in the sense that : for any two realizations $\omega$ and $\omega'$, if $u^\omega(t_0)=u^{\omega'}(t_0)$, then $u^\omega(t)=u^{\omega'}(t)$ for all $t$. This, in particular, defines a flow $\phi_t$ on $\Sigma$:
        \begin{align}
            \phi_t:\Sigma\to \Sigma : \quad u_0\mapsto \phi_t(u_0):=u(t)_{|u(0)=u_0}.
        \end{align}
        \item $\phi_\cdot\, :\Sigma\to \Sigma$ is $H^1$ continuous : $\forall\, T>0$ and all $u_0,\, v_0$ in $\Sigma$,
        \begin{align}
            \lim_{\lVert u_0-v_0\rVert_{H^1}\to 0}\sup_{t\in [-T,T]}\lVert \phi_t(u_0)-\phi_t(v_0)\rVert_{H^1}=0.
        \end{align}
        \item We have
            \begin{align*}
\mathbb{E}\mathcal{M}&=\frac{A^0}{2},
\end{align*}

where $\mathcal{M}$ is the modified total energy defined above, and $A^0$ is an arbitrarily prescribed constant.
        \item The support of $\mu$ contains data of arbitrary modified potential energy level \[\forall R>0,~\mu(\mathcal{V}(u)>R)>0.\] \label{Support property}
 \item The distribution under $\mu$ of the mass functional has density with respect to the Lebesgue measure on $\R$
 \item The measure $\mu$ is of at least two-dimensional nature.
 \end{itemize}
\end{theorem}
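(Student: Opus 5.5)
The plan follows the IID limit scheme of \cite{Sy2019}, in three stages: finite-dimensional stationary measures, uniform estimates, then passage to $N\to\infty$ followed by a Yudovich-type uniqueness and continuity argument.

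\textbf{Finite-dimensional stage.} Let $E_N$ be the span of the first $N$ eigenfunctions of $-\Delta$, with Dirichlet conditions if $\partial M\neq\emptyset$, and let $P_N$ be the orthogonal projection onto $E_N$. On $E_N$ consider the fluctuation--dissipation system
\begin{align*}
du_N = i\bigl((\Delta-1)u_N - P_N((e^{\beta|u_N|^2}-1)u_N)\bigr)dt - \alpha\,\mathcal{L}(u_N)\,dt + \sqrt{\alpha}\,\sum_{m\le N} a_m e_m\, dW_m ,
\end{align*}
with $\mathcal{L}$ as in \eqref{dissipation} and $\sum_m a_m^2 = A^0$.
\begin{itemize}
\item Global existence for fixed $N$ and $\alpha$, and a stationary measure $\mu_N^\alpha$, follow from It\^o's formula applied to the mass together with Krylov--Bogoliubov.
\item The Hamiltonian part preserves mass, and $\langle u,\mathcal{L}(u)\rangle=\mathcal{M}_N(u)$. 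Hence stationarity gives the identity $\mathbb{E}_{\mu_N^\alpha}\mathcal{M}_N = A^0/2$, independent of $\alpha$ and $N$.
\item It\^o's formula for the energy $E$ gives $\mathbb{E}\langle E'(u_N),\mathcal{L}(u_N)\rangle \le C(A^0,N)$. The choice $\gamma>4\beta$ and the prefactor $e^{\gamma C_1\|\cdot\|^2}$ are used here to absorb the cross term $\langle P_N(e^{\beta|u|^2}u),P_N(e^{\gamma|u|^2}u)\rangle$, making the bound uniform in $N$.
\item The resulting uniform controls are $\mathbb{E}\|u_N\|_{W^{1,2+\delta}}^{2+\delta}$, $\mathbb{E}\sum_p \frac{\gamma^p}{p!}\|\nabla u_N^{p+1}\|_{L^2}^2$, and $\mathbb{E}\|u_N\|_{H^1}^2$.
\end{itemize}
Letting $\alpha\to0$, by tightness from these bounds and the compact embeddings on $M$, yields a measure $\mu_N$ that is invariant for the Galerkin flow and still satisfies the bounds and the mass identity.

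\textbf{Limit $N\to\infty$.} I would use Bourgain-type invariance to transfer the static bounds into time-integrated bounds such as $\mathbb{E}\int_{-T}^T \|u_N\|_{W^{1,2+\delta}}^{2+\delta}\,dt \le CT$. Then:
\begin{itemize}
\item Aubin--Lions compactness and Skorokhod's theorem give a limit process $u$ with law $\mu$ at each time, solving \eqref{equation without zero frequency}.
\item The $H^1$ continuity in time should follow from the Duhamel formula, using Kato--Ponce to bound $\|\nabla((e^{\beta|u|^2}-1)u)\|_{L^2}$ through the $W^{1,2+\delta}$ norm (hence $L^\infty$ by Sobolev) and the $\gamma$-exponential gradient controls.
\item Convergence of $g_{\delta,N}$ in $L^2$ follows from the dissipation bounds and defines $\mathcal{K}$. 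Fatou's lemma, upgraded to equality by uniform integrability, passes the identity to $\mathbb{E}\mathcal{M}=A^0/2$.
\item $\Sigma$ is taken as the set of data whose trajectories enjoy the integrated bounds, so that $\mu(\Sigma)=1$.
\end{itemize}

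\textbf{Uniqueness and continuity.} This is the main obstacle. For two solutions $u,v$ in $\Sigma$, set $w=u-v$ and $z(t)=\|w\|_{H^1}^2$. The difference of the nonlinearities is controlled by
\[
\|e^{\beta|u|^2}+e^{\beta|v|^2}\|_{L^{2/\varepsilon}}\,\|w\|_{L^{p}}
\]
together with its gradient analogue. Two estimates enter:
\begin{itemize}
\item The sharp estimate $\|w\|_{L^p}\lesssim\sqrt p\,\|w\|_{H^1}$ on surfaces.
\item Exponential integrability of $|u|^2$ at any rate, available because $u\in W^{1,2+\delta}\subset L^\infty$ on time-integrable sets.
\end{itemize}
These give an Osgood-type inequality $z'\le C(t)\,p\,z^{1-1/p}$ with $C\in L^1_t$. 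Yudovich's argument then yields $z(t)\le (z(0)^{1/p}+\int_0^t C)^p$; letting $p\to\infty$ on short intervals gives uniqueness, and the same bound gives continuity of $\phi_t$ on $\Sigma$. The delicate point is the term $\nabla w$ against $(e^{\beta|u|^2}-1)$, which I would handle through the $L^\infty$ bound of $u$ coming from $W^{1,2+\delta}$.

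\textbf{Support properties.} Positivity of $\mu(\mathcal{V}>R)$ comes from $\mathbb{E}\mathcal{M}=A^0/2$ with $A^0$ arbitrary, combined with a uniform bound on $\mathbb{E}\mathcal{K}$ relative to $\mathcal{V}$ obtained from the energy dissipation. Alternatively, taking $A^0$ large forces $\mathcal{V}$ to exceed any $R$ with positive probability. Absolute continuity of the law of the mass and two-dimensionality follow from Krylov's local time estimates for the semimartingale $\|u_N\|_{L^2}^2$, as in \cite{Sy2019}; these estimates are uniform in $\alpha$ and $N$ because the mass dissipation is bounded below.
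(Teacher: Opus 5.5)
\textbf{Main gap: uniqueness and continuity.} Your finite-dimensional stage and the Aubin--Lions/Skorokhod passage $N\to\infty$ follow the paper's scheme. The uniqueness and continuity step, which you rightly call the main obstacle, has a genuine gap.

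You control the exponential factor through $u\in W^{1,2+\delta}\subset L^\infty$. But all that is available is $u\in L^{2+\delta}_{loc}W^{1,2+\delta}$, i.e.\ $\|u(t)\|_{L^\infty}\in L^{2+\delta}_{loc}$ in time. Bounding $\|e^{\beta|u|^2}\|_{L^{2/\varepsilon}}$ by the sup norm produces $e^{\beta\|u(t)\|_{L^\infty}^2}$, which has no reason to be locally integrable in time. So your coefficient $C(t)$ is not known to lie in $L^1_t$. If $e^{\beta^+\|u\|_{L^\infty}^2}$ were in $L^1_{loc}$, plain Gronwall would already give uniqueness; this is exactly the obstruction the paper singles out.

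Nor does ``exponential integrability at any rate'' coming from $C_tH^1$ help. It is only qualitative, whereas Yudovich needs $\|e^{\beta^+|u|^2}-1\|_{L^q}$ to grow at most linearly in $q$, with a time-integrable coefficient. For a general $H^1$ function the growth in $q$ is super-polynomial.

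The missing ingredient is Lemma \ref{uniqueness lemma}:
\begin{itemize}
\item The term $C_2P_N(e^{\gamma|u|^2}u)$ in the dissipation yields, via the energy dissipation and invariance, $\int_0^t\sum_p\frac{\gamma^p}{p!}\|u^{p+1}\|_{H^1}^2\,d\tau<\infty$ a.s.
\item A comparison of series, which is where $\gamma>2\beta$ is used, then gives $e^{\beta^+|u|^2}-1\in L^2_{loc}H^1$. (The Galerkin cross term is absorbed by the choice of $C,C_1,C_3$, not by $\gamma>4\beta$ as you suggest.)
\item The sharp Sobolev constant gives $\|e^{\beta^+|u|^2}-1\|_{L^q}\lesssim\sqrt q\,\|e^{\beta^+|u|^2}-1\|_{H^1}$, a loss that Yudovich tolerates.
\end{itemize}
The paper feeds this into Theorem \ref{yudowich theorem} at the $L^2$ level for uniqueness. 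For $H^1$ continuity it pairs with $-\Delta w$ and places $e^{\beta^+|u_i|^2}-1$ and $w$ in $L^{f(\epsilon)}$ with $\sqrt{f(\epsilon)}$ constants. It uses $W^{1,2+\delta}$ only through the vanishing power $\theta$ in $\|\nabla w\|_{L^{2+\epsilon}}\le\|\nabla w\|_{L^2}^{1-\theta}\|\nabla w\|_{L^{2+\delta}}^{\theta}$, with $\theta\sqrt{f(\epsilon)}\to0$. The $L^\infty$ bound never sits inside an exponential.

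\textbf{Further gaps: the remaining properties.} These also need ideas you do not supply.
\begin{itemize}
\item \emph{Equality $\mathbb{E}\mathcal{M}=A^0/2$.} Upgrading Fatou to equality requires an actual source of uniform integrability. The paper derives $\int E(u)\mathcal{E}_0(u)\,\mu_N(du)\le C_8$ from It\^o's formula for $E^2$, $ME$ and $M^2$, plus an $L^r$ bound, $r>1$, on $\mathcal{K}_N$.
\item \emph{Large $\mathcal{V}$ under one measure.} A single identity $\mathbb{E}\mathcal{M}=A^0/2$ cannot give $\mu(\mathcal{V}>R)>0$ for \emph{every} $R$ under one fixed $\mu$, and taking $A^0$ large produces a different measure for each $R$. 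The paper rescales the noise $a_j\mapsto\lambda a_j$, gets $\mathbb{E}\mathcal{M}^\lambda=\lambda^2A^0/2$, and sets $\mu=\sum_n2^{-n}\mu^{\lambda_n}$ with $\lambda_n\to\infty$. It then passes from $\mathcal{M}$ to $\mathcal{V}$ by a pathwise finite-dimensional comparison of $\mathcal{K}_N$ with $\mathcal{V}_N$. A bound on $\mathbb{E}\mathcal{K}$ from the energy dissipation grows with the noise intensity, so it cannot by itself force $\mathcal{V}$ to be large.
\item \emph{Absolute continuity of the mass.} Local-time estimates for the scalar semimartingale $\|u_N\|_{L^2}^2$ give only one-dimensional information, namely absolute continuity of the law of the mass. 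Even this uses that all $a_m\neq0$ and the non-atomicity bound $\mathbb{E}[\mathcal{M}_N/\|u\|_{L^2}]\le C$.
\item \emph{Dimension at least two.} To get $\mu(K)=0$ when $\dim_H K<2$, the paper applies the two-dimensional Krylov estimate to $X(u)=(M(u),E(u))$, with $E$ replaced by $\frac14\|u\|_{L^4}^4$ on $\{\langle i\Delta u,|u|^2u\rangle=0\}$. It shows $\det(\sigma_X\sigma_X^*)>0$ for nonconstant $u$ using $a_m\neq0$, and concludes because $X$ does not increase Hausdorff dimension.
\end{itemize}
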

\begin{remark}
    As the modified potential energy has similar regularity as the modified potential energy, the large modified potential  energy property suggests that the support of the measure contains supercritical data in the sense of \cite{CollianderIbrahimMajdoubMasmoudi2009}. Further properties of the support (absolute continuity of the mass and lower bound on the dimension) establish nontriviality of the measure.
\end{remark}
\begin{remark}
It has been established that the support of $\mu$ is neither concentrated at $0$ nor on the set of nontrivial stationary solutions, since the latter do not exist for the present equation. Indeed, if $\phi(x)$ were a nontrivial stationary solution, then it would satisfy
 $(\Delta-1) \phi=(e^{\beta|\phi|^2} -1)\phi$. Multiplying this equation by $\overline{\phi}$, integrating over $M$, and using Green's formula together with the homogeneous Dirichlet boundary condition in the case of a compact manifold with boundary, one immediately arrives at a contradiction.
   It would also be of considerable interest to determine whether the support of $\mu$ consists exclusively of nontrivial standing waves or not. However, this issue is substantially more delicate in the present setting, as even the existence of nontrivial standing waves remains an open problem. To the best of our knowledge, there is currently no result establishing the existence of nontrivial standing waves for the nonlinear Schrödinger equation with Moser Trudinger type nonlinearity in the regime of arbitrary values of $\beta$ and arbitrarily large data. More precisely, a nontrivial standing wave of the form $u=e^{iwt}\phi(x)$ solution to the equation, if it existed, would necessarily satisfy $ -\Delta\phi+(1+w)\phi+(e^{\beta|\phi|^2}-1)\phi=0$\quad with $w<-1$.  The existing constructions of standing waves rely therefore on variational methods for the associated elliptic problem, which fundamentally use Moser Trudinger type inequality and typically require restrictions on $\beta$ and the data to guarantee the coercivity of the potential energy.\\
    In this sense, our results reveal the existence of a genuinely new global dynamical regime, corresponding to arbitrary values of $\beta$ and arbitrarily large initial data, for which the existence of nontrivial standing waves is presently unknown, since the standard approaches based on variational methods to the associated elliptic problem appear to be insufficient to describe this regime.
\end{remark}
\subsection{Difficulties}
    It is worth mentioning the following noticeable difficulties :
    \begin{itemize}
        \item (The dissipation building) The main focus in this article is to construct a probabilistic framework which ensures both global regularity (global existence, uniqueness and continuity) and some large data property. These two properties appear to play against each other (as can be expected from the trichotomy of \cite{CollianderIbrahimMajdoubMasmoudi2009}). This makes to dissipation operator - which is the source of estimates - very tricky to build. The analysis of quantities that it produces introduces several difficulties throughout the paper.
        \item (The uniqueness challenge) Combining the estimates obtained from the dissipation operator and the 'pointwise' boundedness property of the energy, we show that the solution has regularity $C_tH^1\cap L^{2+\delta}_tW^{1,2+\delta}_x$ (alongside some $L^p_{t,x}$). However, to prove uniqueness, one nearly requires  $L^\infty_{t,x}$ bound (as a matter of fact, exponential integrability in time of the $L^\infty_x$ norm). While the $L^p_{t,x}$ does not reach any of them, $C_tH^1$ reaches only $L^\infty_t$ but not $L^\infty_x$ and $L^{2+\delta}_tW^{1,2+\delta}_x$ reaches only $L^\infty_x$  but is far from $L^\infty_t$.
        \item $H^1$ continuity will also face similar challenges.
        \item Nontriviality, from which follows the large data property, requires a delicate passage to the limit procedure to properly define all terms involved in the modified energy $\mathcal{M}(u)$ and to prevent a loss of mass. 
    \end{itemize}

\subsection{General notations}
\leavevmode\par
We denote by $M:=(M,g)$ a compact Riemannian manifold of dimension $2$ with or without boundary, $\Delta$  the Laplace Beltrami operator on $M$ and $(e_n)_{n\in \mathbb{N}}$ a normalized eigenfunctions basis of $-\Delta$, orthonormal in $L^2(M)$. The associated eigenvalues are noted $(\lambda_n)_{n\in \mathbb{N}}$ such that $0=\lambda_0 <\lambda_1\leq \lambda_2 \leq .....\leq \lambda_n \leq ...$.\\
For any $u\in L^2(M)$, we have $u(x)=\sum_{n\in \mathbb{N}}u_ne_n(x)$ and,
by Parseval identity, $$\lVert u \rVert^2_{L^2}:=\int_{M}{\abs{u(x)}^2 dx}=\sum_{n\in \mathbb{N}}\abs{u_n}^2.$$
We denote by $\lVert~~ \rVert_{L^p}$ the norm of the Lebesgue space $L^p(M),\ p\in[1,\infty]$, $\lVert \quad \rVert_{H^s}$ the norm of Sobolev space $H^s(M),\ s\in\R$ and  $|~|$ the modulus of a complex number.\\
The Sobolev norm of $H^s$ is defined by $$\lVert u \rVert_{H^s}^2:=\lVert{(1-\Delta)^{\frac{s}{2}}u}\rVert_{L^2}^2={\sum_{n\in \mathbb{N}}(1+\lambda_n)^s\abs{u_n}^2}.$$
For a Banach space \( X \) and an interval \( I \subset \mathbb{R} \), we denote by \( C(I,X) = C_I^tX \) the space of continuous functions \( f : I \to X \). The corresponding norm is 
$\|f\|_{C^t_IX} = \sup_{t \in I} \|f(t)\|_X.$\\
For \( p \in [1,\infty) \), we also denote by \( L^p(I,X) = L^{p}_IX \), the spaces given by the norm
\[
\|f\|_{L^{p}_IX} = \left( \int_I \|f(t)\|_X^p \, dt \right)^{\frac{1}{p}}.
\]
We denote by $E_N$ the subspace of $L^2$ generated by the finitely many family $\{e_n ;n\leq N\}$, the operator $P_N$ is the projector onto $E_N$ and $E_{\infty}$ refers to $L^2$, and the projector $P_{\infty}$ to the identity operator.\\
The inequality $A \lesssim B$ between two positive quantities $A$ and $B$ means
$A \le C B$ for some $C > 0$.
\section{On a general framework of the Yudovich argument}
In this section, we formulate a framework for Yudovich’s argument for the uniqueness of PDEs posed in $H^{\frac{d}{2}}$, where $d$ is the dimension of the compact manifold $M$. Here, the lack of embedding $H^{\frac{d}{2}}\hookrightarrow L^\infty$
 may render the nonlinearity non-Lipschitz monotone, which prevents the use of standard tools to prove the uniqueness. Nevertheless, using Yudovich’s argument, we are able to overcome this problem of regularity.
\begin{theorem}\label{yudowich theorem}
   Let $M$ be a compact manifold of dimension $d$; let us consider the general abstract PDE of the form: \begin{equation}\label{abstract equation}
\left\{
\begin{aligned}
    \frac{du}{dt} + Au(t) &= F(u(t)) + f(t,x), \\
    u(0) &= u_0\in H^{\frac{d}{2}}
\end{aligned}
\right.
\end{equation}

       where  $u:\mathbb{R}\to L^2(M,\mathbb{K})$, $A:V\to V^*$ is a linear and coercive or positive semi-definite operator such that we have the Gelfand triple $\left(V\hookrightarrow L^2\hookrightarrow V^*\right)  $ and $F$ is a non-linear function or operator such that for all $u_1, u_2$ 
       \begin{align*}
           \langle F(u_1)-F(u_2),u_1-u_2\rangle\lesssim \langle H(u_1-u_2)G(u_1,u_2),u_1-u_2\rangle.
           \end{align*}
           Let us assume that $\forall r\in (1,2),~ \lVert H(u_1-u_2)\rVert_{L^{2r}}\lesssim \lVert u_1-u_2\rVert_{L^{2r}}$ and  there exist $p\geq 1$ such that  
           \begin{align}\label{Yudovich estimate}
           \forall t_0>0, ~~\int_0^{t_0}\left(1+\lVert u_1(\tau)\rVert_{\dot{H}^{\frac{d}{2}}}^p+\lVert u_2(\tau)\rVert_{\dot{H}^\frac{d}{2}}^p\right)\lVert G(u_1(\tau),u_2(\tau))\rVert_{\dot{H}^\frac{d}{2}}d\tau<\infty.
           \end{align}
       Then the solution of \eqref{abstract equation} is unique.
\end{theorem}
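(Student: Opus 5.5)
Take two solutions $u_1,u_2$ of \eqref{abstract equation} with the same initial datum $u_0$, both in the class where \eqref{Yudovich estimate} holds, and set $w=u_1-u_2$ and $z(t)=\lVert w(t)\rVert_{L^2}^2$. Since $w(0)=0$, we must show $z\equiv0$. Pair the difference equation with $w$ in the Gelfand triple and take real parts. Coercivity or semi-definiteness of $A$ gives $\mathrm{Re}\langle Aw,w\rangle\ge0$, and the forcing $f$ cancels. The structural hypothesis on $F$ then gives
\begin{align*}
\frac12\frac{d}{dt}z(t)\lesssim \langle H(w)G(u_1,u_2),w\rangle\le \int_M |H(w)|\,|G(u_1,u_2)|\,|w|\,dx .
\end{align*}
In the Schrödinger application the term $\mathrm{Re}\langle iAw,w\rangle$ vanishes, so this step is harmless.

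Next I introduce the Yudovich relaxation. Fix $r\in(1,2)$ close to $1$ and write $r=1+\varepsilon$ with small $\varepsilon$. Apply Hölder with exponents $(2r,\,2r,\,r')$, where $r'=r/(r-1)=1+1/\varepsilon$:
\begin{align*}
\int_M |H(w)||G||w|\le \lVert H(w)\rVert_{L^{2r}}\lVert w\rVert_{L^{2r}}\lVert G\rVert_{L^{r'}}\lesssim \lVert w\rVert_{L^{2r}}^2\lVert G\rVert_{L^{r'}} .
\end{align*}
The last inequality uses the assumption on $H$.

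Two inequalities on $M$ then do the work.

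The first is the sharp Sobolev bound $\lVert G\rVert_{L^{q}}\le C\sqrt{q}\,\lVert G\rVert_{H^{d/2}}$ for large $q$, which follows from Moser–Trudinger and exponential integrability. Applying it with $q=r'$ gives a factor $C\varepsilon^{-1/2}\lVert G\rVert_{H^{d/2}}$.

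The second is an interpolation for $w$ between $L^2$ and $L^{q}$ with $q$ large:
\begin{align*}
\lVert w\rVert_{L^{2r}}^2\le \lVert w\rVert_{L^2}^{2(1-\theta)}\lVert w\rVert_{L^{q}}^{2\theta},\qquad \theta=O(\varepsilon).
\end{align*}
I take $q=4r/(2-r)$ or simply a fixed large $q$, so that $\lVert w\rVert_{L^q}\lesssim \lVert u_1\rVert_{H^{d/2}}+\lVert u_2\rVert_{H^{d/2}}+1$. The inhomogeneous $H^{d/2}$ norm splits into the $L^2$ part plus the $\dot H^{d/2}$ part, and the $L^2$ part is locally bounded by continuity.

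Combining, and arranging the exponents so that $2\theta\le p$ (enlarging $p$ and absorbing powers with $1+x^{a}\le 2(1+x^p)$ for $a\le p$), I expect
\begin{align*}
z'(t)\le C\,\varepsilon^{-1/2}\,\Phi(t)\,z(t)^{1-\varepsilon'},\qquad \Phi(t)=\bigl(1+\lVert u_1\rVert_{\dot H^{d/2}}^p+\lVert u_2\rVert_{\dot H^{d/2}}^p\bigr)\lVert G(u_1,u_2)\rVert_{\dot H^{d/2}},
\end{align*}
where $\varepsilon'=\theta\sim\varepsilon$ and $\Phi\in L^1(0,t_0)$ by \eqref{Yudovich estimate}. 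The nonhomogeneous parts of the norms are again absorbed into the constant.

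Integrate the differential inequality from $z(0)=0$:
\begin{align*}
z(t)\le \Bigl(C\,\varepsilon'\varepsilon^{-1/2}\int_0^t\Phi\Bigr)^{1/\varepsilon'} .
\end{align*}
Since $\varepsilon'\varepsilon^{-1/2}\sim\varepsilon^{1/2}\to0$, the base is less than $1/2$ for small $\varepsilon$, and letting $\varepsilon\to0$ gives $z(t)=0$ on all of $[0,t_0]$. There is no need to restrict to short times, because $\varepsilon'\varepsilon^{-1/2}\to0$ regardless of how large $\int_0^t\Phi$ is. If instead the exponents only produce a base $\sim C\int_0^t\Phi$, I will argue on a short interval where $C\int_0^{t_1}\Phi<1$ and iterate, using the absolute continuity of the integral of $\Phi$. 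Negative times are handled the same way by time reversal.

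The main obstacle is the exponent bookkeeping in the Hölder/interpolation step. The power $\varepsilon^{-1/2}$ coming from the sharp constant must be beaten by the $1/\varepsilon'$ root, and the power of $\lVert u_i\rVert_{H^{d/2}}$ that appears must not exceed $p$. If the $\sqrt q$ growth is too weak for this, I would fall back on the cruder bound $\lVert G\rVert_{L^q}\le Cq\lVert G\rVert_{H^{d/2}}$ together with the short-time iteration, which still closes the argument.
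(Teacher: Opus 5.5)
Your proposal is essentially the paper's proof. Both arguments use Hölder with an exponent $q\sim (r-1)^{-1}$ on $G$, the sharp bound $\lVert G\rVert_{L^q}\le C\sqrt{q}\,\lVert G\rVert_{\dot H^{d/2}}$, and interpolation of the difference against $L^2$ to get $z'\lesssim (1-\lambda)^{-1/2}\Phi\,z^{\lambda}$ with $1-\lambda\sim r-1$; integrating gives $z(t)\le \bigl(C\sqrt{1-\lambda}\int_0^{t_0}\Phi\bigr)^{1/(1-\lambda)}\to 0$ on any $[0,t_0]$ with no short-time iteration, and the differences are cosmetic: you put $w$ in $L^{2r}$ twice and interpolate against a fixed $L^q$, which avoids the paper's remark that $(2r/(r-1))^{(r-1)/2}$ stays bounded, and your claim that the inhomogeneous part of $\lVert G\rVert_{H^{d/2}}$ is ``absorbed into the constant'' is exactly as unjustified as the paper's homogeneous sharp Sobolev bound on a closed manifold, which fails for constants.
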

\begin{proof}
   Let $u_1, u_2$ two solutions of \eqref{abstract equation} starting at $u_0$  then we have:
   \begin{equation}
\left\{
\begin{aligned}
    \frac{du_1}{dt} + Au_1(t) &= F(u_1(t)) + f(t,x), \\
     \frac{du_2}{dt} + Au_2(t) &= F(u_2(t)) + f(t,x).
\end{aligned}
\right.
\end{equation}
Let $u=u_1-u_2$; by talking the equation in $u$ and performing the scalar product with $u$, we have:
\begin{align*}
    \langle \frac{du}{dt},u(t)\rangle+\langle Au(t),u(t)\rangle=\langle F(u_1)-F(u_2),u_1-u_2\rangle\leq C\langle H(u_1-u_2)G(u_1,u_2),u_1-u_2\rangle
\end{align*}

 $\forall r\in (1,2),~2<2r<\frac{2r}{r-1}$ and by applying the Holder inequality with $1=\frac{1}{2}+\frac{1}{2r}+\frac{r-1}{2r}$, we have:
 \begin{align*}
    \frac{1}{2}\frac{d}{dt}\lVert u\rVert_{L^2}^2\leq C\lVert H(u)\rVert_{L^{2r}}\lVert G(u_1,u_2)\rVert_{L^{\frac{2r}{r-1}}}\lVert u\rVert_{L^2}\leq C\lVert u\rVert_{L^{2r}}\lVert G(u_1,u_2)\rVert_{L^{\frac{2r}{r-1}}}\lVert u\rVert_{L^2}.
\end{align*}
Since
$
    \lVert u\rVert_{L^{2r}}\leq C'\lVert u\rVert_{L^{2}}^{2-r}\lVert u\rVert_{L^{\frac{2r}{r-1}}}^{r-1},
$
then we have 
$
    \frac{d}{dt}\lVert u\rVert_{L^2}^2\leq C_1\lVert u\rVert_{L^2}^{2(\frac{3-r}{2})}  \lVert u\rVert_{L^{\frac{2r}{r-1}}}^{r-1}\lVert G(u_1,u_2)\rVert_{L^{\frac{2r}{r-1}}}
    $.\\
    Let us denote by $\lambda=\frac{3-r}{2}<1$.\\ We then have by using the fact that in dimension d, the norm $H^\frac{d}{2}$controls all the $L^q$ norm with $q<\infty$, the Sobolev inequality with precise constant $\lVert v\rVert_{L^\delta}\leq C\sqrt{\delta}\lVert v\rVert_{\dot{H}^\frac{d}{2}}$ and the fact $\left(\frac{2r}{r-1}\right)^\frac{r-1}{2}$ is bounded \\for $1<r<2$, $\sqrt{\frac{2r}{r-1}}=\frac{\sqrt{r}}{\sqrt{1-\lambda}}$: 
\begin{align}
\frac{d}{dt}\lVert u\rVert_{L^2}^2&\leq C_2\lVert u\rVert_{L^2}^{2\lambda} \left( \lVert u_1\rVert_{\dot{H}^{\frac{d}{2}}}+\lVert u_2\rVert_{\dot{H}^{\frac{d}{2}}}\right)^{r-1} \lVert G(u_1,u_2)\rVert_{L^{\frac{2r}{r-1}}} 
\end{align}
Therefore we have for all $p\geq 1$, the Yudovich type estimate for abstract PDE: 
\begin{align}\label{Yudowich type estimate}
\frac{d}{dt}\lVert u\rVert_{L^2}^2&\leq C_3\frac{1}{\sqrt{1-\lambda}}\lVert u\rVert_{L^2}^{2\lambda} \left( 1+\lVert u_1\rVert_{\dot{H}^{\frac{d}{2}}}^p+\lVert u_2\rVert_{\dot{H}^{\frac{d}{2}}}^p\right) \lVert G(u_1,u_2)\rVert_{\dot{H}^\frac{d}{2}}\quad \quad\text{with} ~~\lambda\xrightarrow[<]{}{1}.
\end{align}

Thus
$$
    \frac{1}{1-\lambda} \lVert u(t) \rVert_{L^2}^{2(1-\lambda)}\leq C_3\frac{1}{\sqrt{1-\lambda}}  \int_0^t \left(1+\lVert u_1\rVert_{\dot{H}^\frac{d}{2}}^p+\lVert u_2\rVert_{\dot{H}^\frac{d}{2}}^p\right) \lVert G(u_1,u_2) \rVert_{{\dot{H}^\frac{d}{2}}} d\tau.
$$

\begin{align}\label{estimate}
     \lVert u(t) \rVert_{L^2}^{2} &\leq \left(C_3\sqrt{1-\lambda}  \int_0^t \left(1+\lVert u_1\rVert_{\dot{H}^\frac{d}{2}}^p+\lVert u_2\rVert_{\dot{H}^\frac{d}{2}}^p \right) \lVert G(u_1,u_2) \rVert_{\dot{H}^\frac{d}{2}}  d\tau \right)^{\frac{1}{1-\lambda}}.
\end{align}

     According the estimate \eqref{Yudovich estimate}, since there exist $p\geq 1$, such that $$\forall t_0>0, \int_0^{t_0}\left(1+\lVert u_1(\tau)\rVert_{\dot{H}^{\frac{d}{2}}}^p+\lVert u_2(\tau)\rVert_{\dot{H}^\frac{d}{2}}^p\right)\lVert G(u_1(\tau),u_2(\tau))\rVert_{\dot{H}^\frac{d}{2}}d\tau<\infty,$$
     Therefore $\forall t_0>0$, one has $C_3\sqrt{1-\lambda}\int_0^{t_0}\left(1+\lVert u_1(\tau)\rVert_{\dot{H}^{\frac{d}{2}}}^p+\lVert u_2(\tau)\rVert_{\dot{H}^\frac{d}{2}}^p\right)\lVert G(u_1(\tau),u_2(\tau))\rVert_{\dot{H}^\frac{d}{2}}d\tau<1$ for $\lambda<1$ sufficently close to 1. By letting now $r\to 1$(or $\lambda \to 1$) in \eqref{estimate}, we arrive at $\lVert u(t)\rVert_{L^2}=0 ~~\forall t\leq t_0$.\\
Since $t_0$ is arbitrary, $\lVert u(t)\rVert_{L^2}=0 ~~\forall t\geq 0$ which gives the uniqueness of the solution.
\end{proof}

\begin{remark}
 Let $X$ be a Banach (or Hilbert) space and $u_1, u_2 ~:\mathbb{R^+}\to X$ be two solutions of a PDE with the initial condition $u_1(0)=u_2(0)=u_0$. Let $y(t)=\lVert u_1(t)-u_2(t)\rVert_X$, $f(t)=:F(t,u_1(t),u_2(t))$ with $F:\mathbb{R^+}\times X\times X \to \mathbb{R^+}$ and $g:\mathbb{R^+}\to \mathbb{R^+}$ be a continuous function that is strictly positive except at $0$\\ where $g(0)=0$. Let $h$ be any function. The summary table showing the different scenarios and the appropriate lemmas to demonstrate the uniqueness.
\begin{align}\label{tableau}
\begin{array}{|c||c|c|c|}
\hline
\text{\bfseries } & \text{Form of the estimate} & \text{Condition} & \text{Conclusion} \\
\hline
\text{Gronwall's lemma} & y'(t) \le f(t) y(t) & f\in L^1_{\text{loc},t} & y(t)=0 \\
\hline
\text{Osgood's lemma} & y'(t) \le g(y(t)) & \exists t_0>0,~\int_0^{t_0} \frac{1}{g(s)} ds = +\infty & y(t)=0 \\
\hline
\text{Yudovich's argument} & y'(t) \le h(\lambda) f(t) y(t)^\lambda & \lambda\xrightarrow[<]{}{1},~h(\lambda)(1-\lambda) \to 0,~f\in L^1_{\text{loc},t} & y(t)=0 \\
\hline
\end{array}
\end{align}
\end{remark}

\section{Stochastic dynamic and Probabilistic estimates}
Let us start by introducing some settings and notation that will be used throughout this section.
We denote by $(\Omega, \mathcal{F},\mathbb{P})$ a complete probability space. For a Banach space $E$, we consider random variables $X:\Omega \to E$~~being Bochner measurable functions with respect to $\mathcal{F}$ and $\mathcal{B}(E)$, where $\mathcal{B}(E)$ is Borel $\sigma$-algebra of $E$.
 For any positive integer $N$, we define the $N$-dimension Brownian motion by 
 \begin{align*}
\mathcal{W}_N(t,x)=\sum_{n=1}^Na_ne_n(x)\mathcal{B}_n(t).
\end{align*}
where $\mathcal{B}_n(t)$ is a one-dimensional independent Brownian motion with filtration $(\mathcal{F}_t)_{t\geq 0}$.\\
 The numbers $(a_n)_{n\geq 1}$ are such that $A_0=\sum_{n\geq 1}a_n^2<\infty$. \\
 We denote by $A_N^s=\sum_{n=1}^N \lambda_n^sa_n^2$ and $A^s=\sum_{n\geq1} \lambda_n^sa_n^2$ where $(\lambda_n)_{n\geq 1}$ are the eigenvalues of $-\Delta_g$ such that $A^{1}<\infty$ and $(e_n)_{n\in \mathbb{N}}$ a normalized eigenfunctions basis of $-\Delta$, orthonormal in $L^2(M)$.\\
 Recall that $\langle,\rangle$ denotes the real inner product in $L^2(M;\C)$ define by $\langle u,v\rangle = \text{Re}\left( \int_M u(x) \overline{v(x)} \, dx \right).$ \\ We have the important property $(u,iu)=0$. We denote by $s^+=s+\varepsilon$ for some $\varepsilon>0$ enough to $0$(we use $s^-$ in similar way).
 Let $F: E \to \mathbb{R}, u\mapsto F (u) $ be a smooth function. We denote by $F'(u;v)$ and $F''(u;v,w)$, respectively, the first and second derivative of $F$ at $u$.\\ 
Let us consider the Galerkin projected NLS with Moser Trudinger nonlinearity 
\begin{equation}\label{Galerkin_Trudinger}
\left\{
\begin{aligned}
    \partial_t P_Nu&=i\left((\Delta-1) P_Nu-P_N\left(\left(e^{\beta\abs{P_Nu}^2}-1\right)P_Nu\right)\right),\\
    P_Nu(0) &= P_N u_0 \in E^N.
\end{aligned}
\right.    
\end{equation}
\\
In this section, our goal is to employ the fluctuation–dissipation method to construct global solutions to the Trudinger model via invariant measure arguments and the Skorokhod representation theorem.

The key and essential tool in this method lies in identifying a suitable and effective dissipation operator that can provide both global existence, uniqueness and continuity of the solution with respect to the initial data. In other words, the chosen dissipation must yield sufficient control over the solution in order to successfully implement the Yudovich-type argument to prove uniqueness and continuity of the solution with respect to the initial data.
\subsection{Analysis of Dissipation} We introduce the following dissipation function
\begin{align}\label{dissipation}
    \mathcal{L}(u)= Ce^{\gamma C_1\lVert P_N(e^{\beta\abs{u}^2}u)\rVert_{L^2}^2}\left(u+P_N(e^{\beta\abs{u}^2}u)\right)+C_2P_N(e^{\gamma\abs{u}^2}u)+P_N\left((1-\Delta)^{-\frac{1}{2}}\left(|(1-\Delta)^{\frac{1}{2}}u|^\delta(1-\Delta)^{\frac{1}{2}}u    \right)\right).
\end{align}

Here $C,C_1,C_2,\gamma $ are positive constants  that will be determined later and $\gamma$ will be taken so that uniqueness works; also $\delta \in(0,1)$ is a positive constant.
We denote by $\mathcal{M}(u)$ and $\mathcal{E}(u)$, the dissipation rate of the mass and energy under $\mathcal{L}(u)$.
Let us recall the mass and energy quantities:
\begin{align*}
M(u)&=\frac{1}{2}\lVert u\rVert_{L^2}^2,\\
E(u)&=\frac{1}{2}\lVert u\rVert_{L^2}^2+\frac{1}{2}\lVert \nabla u\rVert_{L^2}^2+\frac{1}{2\beta}\int_M\left(e^{\beta|u|^2}-1-\beta|u|^2\right)dx=\frac{1}{2}\lVert u\rVert_{L^2}^2+\frac{1}{2}\lVert \nabla u\rVert_{L^2}^2+\frac{1}{2\beta}\sum_{p\in \mathbb{N}^*}\frac{\beta^{p+1}}{(p+1)!}\lVert u\rVert_{L^{2p+2}}^{2p+2}.
\end{align*}
We can easily see that $M'(u,v)=(u,v)$ and $E'(u,v)=\left((1- \Delta) u +(e^{\beta\abs{u}^2}-1)u,v\right)$.\\
The dissipation rate of the mass is given by
\begin{align*}
\mathcal{M}(u)&:=\left(u ~,~ Ce^{\gamma C_1\lVert P_N(e^{\beta\abs{u}^2}u)\rVert_{L^2}^2}\left(u+P_N(e^{\beta\abs{u}^2}u)\right)+C_2P_N(e^{\gamma\abs{u}^2}u)+P_N\left((1-\Delta)^{-\frac{1}{2}}\left(|(1-\Delta)^{\frac{1}{2}}u|^\delta(1-\Delta)^{\frac{1}{2}}u    \right)\right)\right)\\
&={Ce^{\gamma C_1\lVert P_N(e^{\beta\abs{u}^2}u)\rVert_{L^2}^2}}\lVert u\rVert_{L^2}^2+{Ce^{\gamma C_1\lVert P_N(e^{\beta\abs{u}^2}u)\rVert_{L^2}^2}}\sum_{p\in\mathbb{N}}\frac{\beta^p}{p!}\lVert u\rVert_{L^{2p+2}}^{2p+2}\\
&\quad \quad
+C_2\sum_{p\in \mathbb{N}}\frac{\gamma^p}{p!}\lVert u\rVert_{L^{2p+2}}^{2p+2}+\left\langle u,P_N\left((1-\Delta)^{-\frac{1}{2}}\left(|(1-\Delta)^{\frac{1}{2}}u|^\delta(1-\Delta)^{\frac{1}{2}}u    \right)\right)\right\rangle.
\end{align*}

In the finite-dimensional step below, we will need the following technical inequalities
\begin{align}\label{3}
    \left|\left\langle u,P_N\left((1-\Delta)^{-\frac{1}{2}}\left(|(1-\Delta)^{\frac{1}{2}}u|^\delta(1-\Delta)^{\frac{1}{2}}u    \right)\right)\right\rangle\right| \leq \lVert u\rVert_{L^2}\lVert (1-\Delta)^\frac{1}{2}u\rVert_{L^2}^{1+\delta}\leq \frac{C}{2}\beta\lVert u\rVert_{L^4}^4+C_N.
\end{align}
Thus
\begin{align}\label{4}
    \mathcal{M}(u)
&\geq \frac{1}{2}\bigg[C{e^{\gamma C_1\lVert P_N(e^{\beta\abs{u}^2}u)\rVert_{L^2}^2}}\left(\lVert u\rVert_{L^2}^2+\sum_{p\in\mathbb{N}}\frac{\beta^p}{p!}\lVert u\rVert_{L^{2p+2}}^{2p+2}\right)
+C_2\sum_{p\in \mathbb{N}}\frac{\gamma^p}{p!}\lVert u\rVert_{L^{2p+2}}^{2p+2}\bigg]-C_N=\mathcal{M}_0(u)-C_N.
\end{align}
\begin{remark}
 The estimate  \eqref{3} could be misinterpreted, as one might naturally expect uniform controls with a view to passing to the infinite-dimensional limit. However, this is not the purpose of this inequality. Indeed, the estimate \eqref{3} is strictly employed at the finite-dimensional level, and only for the construction of the finite-dimensional invariant measure $\mu_N^\alpha$. This construction is based on the classical Bogolyubov–Krylov argument combined with Ito’s formula $$
\mathbb{E} M(u) + \alpha \int_0^t \mathbb{E}\,\mathcal{M}(u)\,d\tau
=
\mathbb{E} M(u_{0,N}) + \alpha \frac{A_{0,N}}{2}\, t.$$ The role of \eqref{3} is to guaranty that the dissipation functional $\mathcal{M}(u)$ provides sufficient control of the $L^2$–mass $\lVert u\rVert_{L^2}^2$ at the finite dimension level, as shown in estimate \eqref{4}. Below, we will establish uniform estimates on relevant quantities that will be used in the passage to infinite-dimensional limit.
\end{remark}
Let us now turn to the dissipation rate of the energy. We have {by using Green's formula and the homogeneous Dirichlet boundary condition for the case with boundary}: 
\begin{align*}
    \mathcal{E}&(u)=\bigg\langle (1- \Delta) u +(e^{\beta\abs{u}^2}-1)u~; \quad {Ce^{\gamma C_1\lVert P_N(e^{\beta\abs{u}^2}u)\rVert_{L^2}^2}}\left(u+P_N(e^{\beta\abs{u}^2}u)\right)+C_2P_N(e^{\gamma\abs{u}^2}u)\\
    &\quad \quad \quad \quad \quad \quad \quad \quad \quad \quad \quad \quad \quad\quad+P_N\left((1-\Delta)^{-\frac{1}{2}}\left(|(1-\Delta)^{\frac{1}{2}}u|^\delta(1-\Delta)^{\frac{1}{2}}u    \right)\right)   \bigg\rangle  \\
    &= {Ce^{\gamma C_1\lVert P_N(e^{\beta\abs{u}^2}u)\rVert_{L^2}^2}}\lVert u\rVert_{H^1}^2+{Ce^{\gamma C_1\lVert P_N(e^{\beta\abs{u}^2}u)\rVert_{L^2}^2}}\sum_{p\in\mathbb{N}}\frac{\beta^p}{p!}\langle -\Delta u,\abs{u}^{2p}u\rangle+C_2\sum_{p\in \mathbb{N}}\frac{\gamma^p}{p!}\langle -\Delta u,\abs{u}^{2p}u\rangle\\
    &\quad+\lVert u\rVert_{W^{1,2+\delta}}^{2+\delta}+2{Ce^{\gamma C_1\lVert P_N(e^{\beta\abs{u}^2}u)\rVert_{L^2}^2}}\sum_{p\in \mathbb{N}}\frac{\beta^p}{p!}\lVert u\rVert_{L^{2p+2}}^{2p+2}+C_2\sum_{p\in \mathbb{N}}\frac{\gamma^p}{p!}\lVert u\rVert_{L^{2p+2}}^{2p+2}+{Ce^{\gamma C_1\lVert P_N(e^{\beta\abs{u}^2}u)\rVert_{L^2}^2}}{\lVert P_N(e^{\beta\abs{u}^2}u)\rVert_{L^2}^2}\\
    &+C_2\langle P_N(e^{\beta\abs{u}^2}u),P_N(e^{\gamma\abs{u}^2}u)\rangle
     + \left\langle P_N(e^{\beta|u|^2}u), P_N\left((1-\Delta)^{-\frac{1}{2}}\left(|(1-\Delta)^{\frac{1}{2}}u|^\delta(1-\Delta)^{\frac{1}{2}}u    \right)\right) \right\rangle -\mathcal{M}(u).
\end{align*}
Let us remark that
\begin{align*}
\langle -\Delta u, |u|^{2p}u\rangle&=\langle |\nabla u|^2,|u|^{2p}\rangle+\langle u \nabla ({\bar u}^p\, u^p), \nabla u \rangle\\
&= \langle |\nabla u|^2,|u|^{2p}\rangle+p \langle u\, u^{p-1} (\nabla u)\, \bar u^p, \nabla u \rangle
+ p \langle u\, \bar u^{p-1} (\nabla \bar u)\, u^p, \nabla u \rangle\\
&\ge \langle |\nabla u|^2,|u|^{2p}\rangle+ p \langle |u|^{2p}, |\nabla u|^2 \rangle
- p \langle |u|^{2p}, |\nabla u|^2 \rangle =\langle |\nabla u|^2,|u|^{2p}\rangle.
\end{align*}
Thus 
\begin{align*}
    \mathcal{E}&(u)\geq {Ce^{\gamma C_1\lVert P_N(e^{\beta\abs{u}^2}u)\rVert_{L^2}^2}}\lVert u\rVert_{H^1}^2+{Ce^{\gamma C_1\lVert P_N(e^{\beta\abs{u}^2}u)\rVert_{L^2}^2}}\sum_{p\in\mathbb{N}}\frac{\beta^p}{p!}\langle \abs{\nabla u}^2,\abs{u}^{2p}\rangle+C_2\sum_{p\in \mathbb{N}}\frac{\gamma^p}{p!}\langle\abs{\nabla u}^2,\abs{u}^{2p}\rangle\\
    &+\lVert u\rVert_{W^{1,2+\delta}}^{2+\delta} +{2Ce^{\gamma C_1\lVert P_N(e^{\beta\abs{u}^2}u)\rVert_{L^2}^2}}\sum_{p\in \mathbb{N}}\frac{\beta^p}{p!}\lVert u\rVert_{L^{2p+2}}^{2p+2}+C_2\sum_{p\in \mathbb{N}}\frac{\gamma^p}{p!}\lVert u\rVert_{L^{2p+2}}^{2p+2}+{Ce^{\gamma C_1\lVert P_N(e^{\beta\abs{u}^2}u)\rVert_{L^2}^2}}{\lVert P_N(e^{\beta\abs{u}^2}u)\rVert_{L^2}^2}\\
    &+C_2\langle P_N(e^{\beta\abs{u}^2}u),P_N(e^{\gamma\abs{u}^2}u)\rangle
    + \left\langle P_N(e^{\beta|u|^2}u), P_N\left((1-\Delta)^{-\frac{1}{2}}\left(|(1-\Delta)^{\frac{1}{2}}u|^\delta(1-\Delta)^{\frac{1}{2}}u    \right)\right) \right\rangle -\mathcal{M}(u).
\end{align*}
Let us observe that $\forall \lambda>0,~~ \sum_{p\in\mathbb{N}}\frac{\lambda^p}{p!}\lVert \nabla u^{p+1}\rVert_{L^2}^2=\sum_{p\in \mathbb{N}}\frac{\lambda^p}{p!}\langle\abs{\nabla u}^2,\abs{u}^{2p}\rangle$.\\
In order to establish a lower bound for 
$\mathcal{E}$, we want to absorb the term $\langle P_N(e^{\beta\abs{u}^2}u),P_N(e^{\gamma\abs{u}^2}u)\rangle$ by $e^{\gamma C_1\lVert P_N(e^{\beta|u|^2}u)\rVert_{L^2}^2}\lVert P_N(e^{\beta|u|^2}u)\rVert_{L^2}^2$ and $\sum_{p\in \mathbb{N}}\frac{\gamma^p}{p!}\lVert \nabla u^{p+1}\rVert_{L^2}^2$. To achieve this, we write the following estimates by introducing a new constant $C_3$ that will be determined later:
\begin{align}
    \langle P_N(e^{\beta\abs{u}^2}u),P_N(e^{\gamma\abs{u}^2}u)\rangle &=\sum_{p\in N}\frac{\gamma^p}{p!}\left\langle |u|^{2p}u, P_N(e^{\beta|u|^2}u)\right\rangle \label{Non linearity dissipation}\\
    &\leq \sum_{p\in N}\frac{\gamma^p}{p!}2(C_3)^{\frac{2p+1}{2p+2}}\lVert P_N(e^{\beta|u|^2}u)\rVert_{L^2}\frac{1}{2}(C_3)^{-\frac{2p+1}{2p+2}}\lVert u\rVert_{L^{4p+2}}^{2p+1}\nonumber. 
\end{align}  
By applying the Young's inequality with $1=\frac{1}{2p+2}+\frac{2p+1}{2p+2}$, we obtain:
\begin{align*}
     \langle P_N(e^{\beta\abs{u}^2}u),P_N(e^{\gamma\abs{u}^2}u)\rangle &\leq \sum_{p\in \mathbb{N}}\frac{\gamma^p}{p!}\left(\frac{2^{2p+2}}{2p+2}(C_3)^{2p+1}\lVert P_N(e^{\beta|u|^2}u)\rVert_{L^2}^{2p+2}+\frac{1}{2^{\frac{2p+2}{2p+1}}}\frac{1}{C_3}\lVert u\rVert_{L^{4p+2}}^{2p+2}\right) \\
     &\leq 2C_3e^{\gamma (4C_3^2)\lVert P_N(e^{\beta|u|^2}u)\rVert_{L^2}^2}\lVert P_N(e^{\beta|u|^2}u)\rVert_{L^2}^2+\frac{1}{2}  \sum_{p\in \mathbb{N}} \frac{\gamma^p}{p!}\frac{1}{C_3}\lVert u\rVert_{L^{4p+2}}^{2p+2}.
\end{align*}
We just need to show that $\frac{1}{C_3}\lVert u\rVert_{L^{4p+2}}^{2p+2}\leq \lVert \nabla u^{p+1}\rVert_{L^2}^2$
to have the claim.\\

Let $q\geq 4$ and let us set $\alpha(p)=\frac{2p+2}{4p+2}-\frac{2p+2}{(p+1)q}$, we have by applying the Hölder's inequality and Sobolev embedding $H^1\hookrightarrow L^q$ with a precise constant: $$\lVert u\rVert_{L^{4p+2}}^{2p+2}\leq |M|^{\alpha(p)}\lVert u\rVert_{L^{(p+1)q}}^{2p+2}=|M|^{\alpha(p)}\lVert u^{p+1}\rVert_{L^q}^2\leq |M|^{\alpha(p)} C'^2q\lVert \nabla u^{p+1}\rVert_{L^2}^2$$
Let us set
$C_M=\sup_{p\geq 0}|M|^{\alpha(p)}$, Since $\alpha(0)=1-\frac{2}{q}$, $ \lim_{p\to \infty}\alpha(p)=\frac{1}{2}-\frac{2}{q}$ and $\alpha(p)\geq 0, \alpha'(p)<0$; It is easy to see that $C_M=\max(1, |M|^{1-\frac{2}{q}})$ which does not depend on $p$.
Therefore by taking\\ $    C_MC'^2q\leq C_3\leq \frac{C}{8C_2}$ where $C$ is large enough and  $C_1=4C_3^2$; we have $\frac{1}{C_3}\lVert u\rVert_{L^{4p+2}}^{2p+2}\leq \lVert \nabla u^{p+1}\rVert_{L^2}^2$ and obtain the following: 

\begin{align*}
    \langle P_N(e^{\beta\abs{u}^2}u),P_N(e^{\gamma\abs{u}^2}u)\rangle 
    &\leq \frac{C}{4C_2}e^{\gamma C_1\lVert P_N(e^{\beta|u|^2}u)\rVert_{L^2}^2}\lVert P_N(e^{\beta|u|^2}u)\rVert_{L^2}^2+\frac{1}{2}\sum_{p\in \mathbb{N}}\frac{\gamma^p}{p!}\lVert \nabla u^{p+1}\rVert_{L^2}^2.
\end{align*}
Also, 
\begin{align*}
  \bigg| \bigg\langle P_N(e^{\beta|u|^2}u), P_N\left((1-\Delta)^{-\frac{1}{2}}\left(|(1-\Delta)^{\frac{1}{2}}u|^\delta(1-\Delta)^{\frac{1}{2}}u    \right)\right) \bigg\rangle \bigg|
  &\leq \bigg\lVert P_N(e^{\beta|u|^2}u)\bigg\rVert_{L^2}\bigg\lVert |(1-\Delta)^{\frac{1}{2}}u|^\delta(1-\Delta)^{\frac{1}{2}}u \bigg\rVert_{H^{-1}}.
\end{align*}
Since we are in two-space dimension, $L^{1+\frac{1-\delta}{1+\delta}}\hookrightarrow H^{-1}$, we have by using Holder and Young's inequalities:
\begin{align*}
\bigg| \bigg\langle P_N(e^{\beta|u|^2}u), P_N\left((1-\Delta)^{-\frac{1}{2}}\left(|(1-\Delta)^{\frac{1}{2}}u|^\delta(1-\Delta)^{\frac{1}{2}}u    \right)\right) \bigg\rangle \bigg|&\leq\lVert P_N(e^{\beta|u|^2}u)\rVert_{L^2}\lVert u\rVert_{H^1}^{1+\delta}
\\
&\leq \frac{C}{2}e^{\gamma C_1\lVert P_N(e^{\beta|u|^2}u)\rVert_{L^2}^2}\lVert u\rVert_{H^1}^2+\tilde{C}.
\end{align*}

Therefore, we obtain 

\begin{align*}
\mathcal{E}(u) \geq \frac{1}{2} \bigg[  
& Ce^{\gamma C_1\lVert P_N(e^{\beta |u|^2} u) \rVert_{L^2}^2}\bigg(\lVert u \rVert_{H^1}^2 
  +  
\sum_{p \in \mathbb{N}} \frac{\beta^p}{p!} \lVert \nabla u^{p+1} \rVert_{L^2}^2  + \sum_{p \in \mathbb{N}} \frac{\beta^p}{p!} \lVert  u \rVert_{L^{2p+2}}^{2p+2} 
+  \lVert P_N(e^{\beta |u|^2} u) \rVert_{L^2}^2 \bigg)\\
&+ C_2\sum_{p \in \mathbb{N}} \frac{\gamma^p}{p!} \lVert u^{p+1} \rVert_{H^1}^2 +\lVert u\rVert_{W^{1,2+\delta}}^{2+\delta}
\bigg] -\mathcal{M}(u)-\tilde{C}=\mathcal{E}_0(u)-\mathcal{M}(u)-\tilde{C}.
\end{align*}
\subsection{Stochastic Global well-posedness}
Let us now consider the following-fluctuation dissipation model.
\begin{equation}\label{eq3}
\left\{
\begin{aligned}
du &= \bigg[i \left((\Delta-1) u - P_N ((e^{\beta\abs{u}^2}-1)u)\right)-\alpha \left( Ce^{\gamma C_1\lVert P_N(e^{\beta\abs{u}^2}u)\rVert_{L^2}^2}\left(u+P_N(e^{\beta\abs{u}^2}u)\right)+C_2P_N(e^{\gamma\abs{u}^2}u)\right)\\
&\quad\quad\quad \quad\quad\quad-\alpha P_N\left((1-\Delta)^{-\frac{1}{2}}\left(|(1-\Delta)^{\frac{1}{2}}u|^\delta(1-\Delta)^{\frac{1}{2}}u    \right)\right)\bigg]dt+\sqrt{\alpha} \sum_{n=1}^N a_ne_n(x)d\mathcal{B}_n(t),\\
u(0) &= P_N u_0:=u_{0,N} \in E_N.
\end{aligned}
\right.
 \end{equation}
 Let us start by defining what we mean by a stochastically globally well-posed problem.
  \begin{definition}(Stochastic global well-posedness).
  \leavevmode\\
  We will say that the problem(\ref{eq3}) is globally stochastically well posed on $E_N$if the following properties hold: \begin{itemize}
    \item For any $E_N$-valued random variable $u_{0,N}$ independent of $\mathcal{F}_t$, 
    \begin{itemize}
        \item for $\mathbb{P}$-a.e. $\omega$, there exists a solution $u = u^\omega \in C(\mathbb{R}^+; E_N)$ of \eqref{eq3} with initial datum $u_{0,N} = u_{0,N}^\omega$ in the integral sense, i.e.,
        \begin{align*}
            u(t) &= u_{0,N} + \int_0^t i \big((\Delta-1) u - P_N ((e^{\beta\abs{u}^2}-1)u) \big)d\tau\\
            &\quad\quad- \alpha\int_0^t \left( Ce^{\gamma C_1\lVert P_N(e^{\beta\abs{u}^2}u)\rVert_{L^2}^2}\left(u+P_N(e^{\beta\abs{u}^2}u)\right)+C_2P_N(e^{\gamma\abs{u}^2}u)\right) \, d\tau\\ &\quad \quad  - \alpha\int_0^t P_N\left((1-\Delta)^{-\frac{1}{2}}\left(|(1-\Delta)^{\frac{1}{2}}u|^\delta(1-\Delta)^{\frac{1}{2}}u    \right)\right)d\tau + \sqrt{\alpha} \mathcal{W}_N(t).
        \end{align*}
        with equality as elements of $C(\mathbb{R}^+; E_N)$.
    \end{itemize}
    \item For any $u_1^\omega, u_2^\omega \in C(\mathbb{R}^+; E_N)$ two solutions with respective initial data $u_{1,0,N}^\omega, u_{2,0,N}^\omega$, then\\ \[\lim_{u_{1,0,N}\to u_{2,0,N}} u_1^\omega(\cdot,u_{1,0,N}) = u_2^\omega(\cdot,u_{2,0,N}) ~\text{and if}~ u_{1,0,N}^{\omega}=u_{2,0,N}^{\omega} ~\text{then}~ u_1^\omega \equiv u_2^\omega.\]
    \item The process $(\omega, t) \mapsto u^\omega(t)$ is adapted to the filtration $\sigma(u_{0,N}, \mathcal{F}_t)$.
    \end{itemize}
    \end{definition}
    To solve this stochastic equation, we use a Da-Prato-Debussche's trick which consists of splitting the solution as follows $u=v+z$, where 
    \begin{equation} \label{linear-stoch-eq}
\left\{
\begin{aligned}
dz &= i~\Delta z~dt+\sqrt{\alpha}\sum_{n=1}^N a_ne_n(x)d\mathcal{B}_n(t),\\
z(0)&=0.
\end{aligned}
\right.
\end{equation}

\begin{equation}
\left\{
\begin{aligned}\label{eq in v}
    \partial_t v &=F(v)= i \left( \Delta v - P_N\left( e^{\beta |v + z|^2} (v + z) \right) \right)- \alpha P_N\left((1-\Delta)^{-\frac{1}{2}}\left(|(1-\Delta)^{\frac{1}{2}}(v+z)|^\delta(1-\Delta)^{\frac{1}{2}}(v+z)    \right)\right) \\
    &\quad\quad \quad\quad - \alpha \left( Ce^{\gamma C_1\lVert P_N(e^{\beta\abs{v+z}^2}(v+z))\rVert_{L^2}^2}\left((v+z)+P_N(e^{\beta\abs{v+z}^2}(v+z))\right) 
     +C_2P_N(e^{\gamma\abs{v+z}^2}(v+z))\right), \\
    v(0) &= P_N u_0.
\end{aligned}
\right.
\end{equation}

The equation(\ref{linear-stoch-eq}) is a linear stochastic equation whose unique solution is given by the stochastic convolution 
\begin{align}
z_{\alpha}(t)=\sqrt{\alpha}\int_0^te^{(t-\tau)i\Delta}d\mathcal{W}_N(\tau).
\end{align}
A basic application of Ito's formula and Doob's inequality gives us that
 for $\mathbb{P}-$almost sure $w\in \Omega$,~~$\forall T>0$,
\begin{align}\label{estimate on z}
\sup_{t\in[0,T]}\lVert z^w_{\alpha}(t)\rVert_{L^2}^2\leq C_{\alpha}(w,T).
\end{align}
Hence, for $\mathbb{P}-$ almost all $w\in \Omega, z_\alpha^w\in C_t(\mathbb{R}_t,C^\infty(E_N)).$\\
 Let us now consider the second problem \eqref{eq in v}.
Since we are in finite dimension ,it is easy to see that $F\in C^{\infty}(E_N,E_N)$, then thanks to the Cauchy Lipschitz theorem, the problem has a local-time smooth solution.
Now, the next task is to show that our solution $v$ is global in time $\mathbb{P}$-almost surely.
\begin{proposition}\label{Gwp of v}
    The local solution $v$ constructed above exists globally in time, $\mathbb{P}-$almost surely.
\end{proposition}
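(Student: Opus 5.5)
We argue by contradiction on the maximal existence time. Fix $\omega$ in the full-measure set where \eqref{estimate on z} holds for every $T>0$, so that $z=z_\alpha^\omega$ is bounded in $E_N$ on every $[0,T]$; in $E_N$ all norms are equivalent, so $z$ is bounded in $L^\infty_x$ and $H^1$ there as well. Let $T^*=T^*(\omega)$ be the maximal existence time of the Cauchy--Lipschitz solution $v$ of \eqref{eq in v}. By the blow-up alternative in finite dimension, $T^*<\infty$ forces $\limsup_{t\to T^*}\lVert v(t)\rVert_{L^2}=\infty$. It therefore suffices to show that $\lVert v(t)\rVert_{L^2}$ stays bounded on $[0,T^*)\cap[0,T]$ for every finite $T$.

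To get this bound, take the real inner product of \eqref{eq in v} with $v$. The linear Schr\"odinger part drops out, since $\langle i\Delta v,v\rangle=0$. We then write $v=u-z$ with $u=v+z$ inside the remaining terms. The nonlinear Hamiltonian term gives $\langle iP_N(e^{\beta|u|^2}u),v\rangle=-\langle iP_N(e^{\beta|u|^2}u),z\rangle$, because $\langle i e^{\beta|u|^2}u,u\rangle=0$. For the dissipative part, pairing $-\alpha\mathcal L(u)$ with $u$ produces exactly $-\alpha\mathcal M(u)$, and \eqref{4} gives $\mathcal M(u)\ge \mathcal M_0(u)-C_N$. The cross terms $\alpha\langle \mathcal L(u),z\rangle$ and $\langle iP_N(e^{\beta|u|^2}u),z\rangle$ are bounded using $\lVert z\rVert_{L^\infty}\le C_N\lVert z\rVert_{L^2}\le C(\omega,T)$. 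This leaves quantities of the form $e^{\gamma C_1\lVert P_N(e^{\beta|u|^2}u)\rVert^2}\lVert u+P_N(e^{\beta|u|^2}u)\rVert_{L^1}$, $\lVert e^{\gamma|u|^2}u\rVert_{L^1}$ and $\lVert u\rVert_{H^1}^{1+\delta}$, each multiplied by $C(\omega,T)$.

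The key step, and the main obstacle, is showing these cross terms can be absorbed by $\frac12\alpha\mathcal M_0(u)$ plus a constant depending on $(\omega,T,N)$. For the $C_2$ term we use pointwise $|e^{\gamma|u|^2}u|\le \varepsilon\sum_p\frac{\gamma^p}{p!}|u|^{2p+2}+C_\varepsilon$, since $|u|^{2p+1}\le \varepsilon|u|^{2p+2}+\varepsilon^{-(2p+1)}$ is too lossy summed naively. We handle this instead by splitting $\{|u|\le K\}$ and $\{|u|>K\}$ with $K=K(\omega,T)$ large, so that on the latter set $|z|\,|u|^{2p+1}\le K^{-1}\lVert z\rVert_\infty|u|^{2p+2}$. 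The exponentially weighted term is treated the same way: its prefactor is common to both sides, and $\lVert P_N(e^{\beta|u|^2}u)\rVert_{L^1}\lesssim_N\lVert P_N(e^{\beta|u|^2}u)\rVert_{L^2}\le \lVert e^{\beta|u|^2}u\rVert_{L^2}$, which is controlled by $\sum_p\frac{(2\beta)^p}{p!}\lVert u\rVert^{2p+2}_{L^{2p+2}}$. If $2\beta$ is not $\le\gamma$ this is dominated by the $C_2$ sum, since $\gamma>4\beta$; the remaining lower-order pieces go into $C_N$. The $\delta$-term $\lVert u\rVert_{H^1}^{1+\delta}\lVert z\rVert$ is controlled in $E_N$ by $C_N\lVert u\rVert_{L^2}^{1+\delta}$, which is absorbed by $\lVert u\rVert_{L^4}^4$ exactly as in \eqref{3}.

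Putting everything together yields $\frac{d}{dt}\lVert v\rVert_{L^2}^2\le C(\omega,T,N)$ on $[0,T^*)\cap[0,T]$. Integrating gives $\lVert v(t)\rVert_{L^2}^2\le \lVert P_Nu_0\rVert^2_{L^2}+C(\omega,T,N)\,T$, which contradicts blow-up. Since $T$ is arbitrary, $T^*=\infty$ for $\mathbb P$-almost every $\omega$.
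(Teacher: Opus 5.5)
Your skeleton is the paper's. You pair \eqref{eq in v} with $v$, use $\langle i\Delta v,v\rangle=0$ and $\langle ie^{\beta|u|^2}u,u\rangle=0$, write $v=u-z$ to bring out $-\alpha\mathcal{M}(u)\le-\alpha\mathcal{M}_0(u)+\alpha C_N$, absorb the $z$-cross terms into the dissipation, and integrate. Your level-set splitting amounts to $|z|\,e^{\lambda|u|^2}|u|\le\varepsilon e^{\lambda|u|^2}|u|^2+C(\varepsilon,\lVert z\rVert_{L^\infty})$. It is a more elementary substitute for the paper's generalized Young inequality built from the Legendre transform of $\Phi^\lambda(x)=xf_\lambda^{-1}(x)$. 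It correctly handles the cross terms that carry no exponential prefactor: the Hamiltonian one, the $C_2$ one, and the $\delta$-term (with constants also depending on the fixed $\alpha$).

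The gap is the cross term with the prefactor $P(u):=e^{\gamma C_1\lVert P_N(e^{\beta|u|^2}u)\rVert_{L^2}^2}$, namely $\alpha C P(u)\langle u+P_N(e^{\beta|u|^2}u),z\rangle$. Any Young or splitting inequality leaves a remainder $\alpha C P(u)\,A$ with $A=A(\omega,T,N)$. These \emph{lower-order pieces} inherit the prefactor, which is unbounded in $u$, so they cannot be put into $C_N$. As written, $\frac{d}{dt}\lVert v\rVert_{L^2}^2\le C(\omega,T,N)$ does not follow. Your fallback of dominating $\lVert e^{\beta|u|^2}u\rVert_{L^2}$ by the $C_2$-sum also fails for this term, because that sum carries no factor $P(u)$.

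What is missing is the paper's case distinction. Start from the pointwise bound $|z|\bigl(|u|+e^{\beta|u|^2}|u|\bigr)\le\frac14\bigl(|u|^2+e^{\beta|u|^2}|u|^2\bigr)+c(\lVert z\rVert_{L^\infty})$. Adding the prefactored part of $-\alpha\mathcal{M}_0(u)$, the prefactored contributions combine into at most $\alpha C P(u)\bigl(A-\frac14\lVert u\rVert_{L^2}^2\bigr)$ with $A=|M|\,c$. This is nonpositive when $\lVert u\rVert_{L^2}^2\ge 4A$. When $\lVert u\rVert_{L^2}^2<4A$, finite dimensionality gives $\lVert u\rVert_{L^\infty}\le C_N\lVert u\rVert_{L^2}$, so $P(u)$ is bounded by a constant depending on $(\omega,T,N)$. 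The paper phrases the same dichotomy as $\lVert v\rVert_{L^2}^2\le\lVert z\rVert_{L^2}^2+C'$ or not. With this step added, your argument closes with $\frac{d}{dt}\lVert v\rVert_{L^2}^2\le C(\omega,T,N,\alpha)$, and the blow-up alternative finishes the proof.
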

\begin{proof}
    By taking the inner product with $v$, we have :
    \begin{align*}
        \frac{1}{2}\frac{d}{dt}\lVert v\rVert^2_{L^2}&= \langle v,i\Delta v\rangle-\langle v,i(v+z)e^{\beta\abs{v+z}^2}\rangle-\alpha~ Ce^{\gamma C_1\lVert P_N(e^{\beta\abs{v+z}^2}(v+z))\rVert_{L^2}^2}\langle v,v+z\rangle\\&\quad-\alpha ~Ce^{\gamma C_1\lVert P_N(e^{\beta\abs{v+z}^2}(v+z))\rVert_{L^2}^2}\langle v,(v+z)e^{\beta\abs{v+z}^2}\rangle-\alpha C_2\langle v,(v+z)e^{\gamma\abs{v+z}^2}\rangle 
        \\&\quad-\alpha\left\langle v,P_N\left((1-\Delta)^{-\frac{1}{2}}\left(|(1-\Delta)^{\frac{1}{2}}(v+z)|^\delta(1-\Delta)^{\frac{1}{2}}(v+z)    \right)\right) \right\rangle.
        \end{align*}
        By using the fact that $\langle v, i\Delta v\rangle=0$ and $\langle (v+z), i ~(v+z)e^{\beta\abs{v+z}^2}\rangle=0$, we can substitute $-z$ in the second term on the right and by taking $\gamma >\beta$; we have 
    \begin{align*}
        \frac{1}{2}\frac{d}{dt}\lVert v\rVert^2_{L^2}
        &\leq \frac{\alpha}{2}Ce^{\gamma C_1\lVert P_N(e^{\beta\abs{v+z}^2}(v+z))\rVert_{L^2}^2}\left(\lVert z\rVert_{L^2}^2-\lVert v\rVert_{L^2}^2\right)\\&\quad+ (1+\alpha C_2)\int_M|z||v+z|e^{\gamma\abs{v+z}^2}
        -\alpha C_2\int_M|v+z|^2e^{\gamma\abs{v+z}^2}\\
        &\quad
         +\alpha ~Ce^{\gamma C_1\lVert P_N(e^{\beta\abs{v+z}^2}(v+z))\rVert_{L^2}^2}\left( 
    {\int_M |z| |v+z| e^{\beta \abs{v+z}^2} - \frac{1}{2}\int_M |v+z|^2 e^{\beta \abs{v+z}^2}} 
\right)\\
&\quad-\frac{\alpha}{2}Ce^{\gamma C_1\lVert P_N(e^{\beta|v+z|^2}(v+z))\rVert_{L^2}^2}\sum_{p\in \mathbb{N}}\frac{\beta^p}{p!}\lVert v+z\rVert_{L^{2p+2}}^{2p+2}+\alpha C_N\lVert v\rVert_{L^2}^2+\alpha\lVert v+z\rVert_{L^{2+2\delta}}^{2+2\delta}.
    \end{align*}
    
    In order to absorb the term ${\int_M |z| |v+z| e^{\beta \abs{v+z}^2} ~~\text{by}~~ \int_M |v+z|^2 e^{\beta \abs{v+z}^2}} $, we use the generalized Young inequality. To achieve this, we consider the function  $f_\lambda:
        \mathbb{R}^+\to \mathbb{R}^+$ such that $f_\lambda(x)=xe^{\lambda{x}^2}$, $\lambda>0$. The function $f_\lambda$ is continuous and strictly increasing, therefore  it defines a bijection. We want to define a strictly convex application $\Phi:\mathbb{R}^+\to\mathbb{R}^+$ such that $\Phi^\lambda(f_\lambda(x))=x^2e^{\lambda x^2}$. \\ It is easy to see that there exists a unique $C^1$ and strictly convex application $\Phi^\lambda:\mathbb{R}^+\to \mathbb{R}^+$ solution of the functional equation $\Phi^\lambda(f_\lambda({x}))=x^2e^{\lambda{x}^2}$; In fact, since $f_\lambda$ defines a bijection, by defining the application $\Phi^\lambda$ such that
        $\Phi^\lambda(x)=xf_\lambda^{-1}(x)$, It is easy to see that $\Phi^\lambda(f_\lambda({x}))=x^2e^{\lambda{x}^2}$ and a basis computation of $(\Phi^\lambda)^{''}$ show that $(\Phi^\lambda)^{''}>0$ which ensure the strict convexity. \\
         Let us set $\Phi_{\alpha C_2}^\lambda=\alpha C_2\Phi^\lambda$ and $\Phi_{\alpha C_2}^{\lambda^*}$ the Legendre transformation of $\Phi_{\alpha C_2}^\lambda$. By using the fact that $\alpha\in(0,1)$ and the generalized Young inequality $(xy \leq \Phi_{\alpha C_2}^\lambda(x) + \Phi_{\alpha C_2}^{\lambda^*}(y))$, we will then have:
        \begin{align*}
\int_M ( 1+ \alpha C_2) |z| \cdot |v + z| e^{\gamma |v + z|^2} &\leq \int_M  (1+C_2)|z| \cdot |v + z| e^{\gamma |v + z|^2}\\ 
&\leq \int_M \Phi_{\alpha C_2}^{\gamma^*} \left( (1+C_2) |z| \right) + \int_M \Phi_{\alpha C_2}^\gamma \left( |v + z| e^{\gamma |v + z|^2} \right)\\
&= \int_M \Phi_{\alpha C_2}^{\gamma^{*}} \left( (1+C_2) |z| \right) + \alpha C_2\int_M |v + z|^2 e^{\gamma |v + z|^2}.
\end{align*}
Now, since we have $\sup_{t\in[0,T]}\lVert z^w_{\alpha}(t)\rVert_{L^2}^2\leq C_{\alpha}(w,T)$ for $\mathbb{P}-$ almost sure $w\in \Omega$ and $\forall T>0$, using the fact that we are in finite dimensionality, we obtain \[
\| z^w(t) \|_{L^\infty(M)} \leq C_N \| z^w(t) \|_{L^2} \leq C_{N, \omega, T,\alpha}. 
\]
Therefore we have using the growth of $\Phi^\lambda$ and the fact $$-\frac{\alpha}{2}Ce^{\gamma C_1\lVert P_N(e^{\beta|v+z|^2}(v+z))\rVert_{L^2}^2}\sum_{p\in \mathbb{N}}\frac{\beta^p}{p!}\lVert v+z\rVert_{L^{2p+2}}^{2p+2}+\alpha\lVert v+z\rVert_{L^{2+2\delta}}^{2+2\delta}\leq 0.$$
\begin{align*}
\frac{1}{2}\frac{d}{dt}  \| v \|^2 &\leq \quad\frac{\alpha}{2}Ce^{\gamma C_1\lVert P_N(e^{\beta\abs{v+z}^2}(v+z))\rVert_{L^2}^2} \left( \| z \|_{L^2}^2 - \| v \|_{L^2}^2\right)\\
&\quad \quad+\int_M \Phi^{\gamma*}_{\alpha C_2}((1+C_2) |z|) 
+\frac{\alpha}{2}~ Ce^{\gamma C_1\lVert P_N(e^{\beta\abs{v+z}^2}(v+z))\rVert_{L^2}^2}\int_M \Phi_1^{\beta*}(2 |z|)dx+\alpha C_N\lVert v\rVert_{L^2}^2\\
& \leq \frac{\alpha}{2}Ce^{\gamma C_1\lVert P_N(e^{\beta\abs{v+z}^2}(v+z))\rVert_{L^2}^2} \left( \underbrace{ \text{Vol}(M) \Phi_1^{\beta*}(2C_{N, \omega, T,\alpha})}_{{C'_{N, \omega, T,\alpha}}}+ \| z \|_{L^2}^2 - \| v \|_{L^2}^2\right)\\
& \quad+\text{Vol}(M) \Phi^{\gamma*}_{\alpha C_2}((1+C_2)C_{N, \omega, T,\alpha})+\alpha C_N\lVert v\rVert_{L^2}^2 .
\end{align*}
We then have
\begin{itemize}
    \item either $\lVert v^w(t)\rVert_{L^2}^2\leq \lVert z^w(t)\rVert_{L^2}^2+C'_{N, \omega, T,\alpha},$\\
    
    \item Or $\lVert v^w(t)\rVert_{L^2}^2> \lVert z^w(t)\rVert_{L^2}^2+C'_{N, \omega, T,\alpha}.$
\end{itemize}
For the first case, using the estimate on the boundedness of $z$, we  have the claim.\\
For the second case, we have
\begin{align*}
     \frac{\alpha}{2}Ce^{\gamma C_1\lVert P_N(e^{\beta\abs{v+z}^2}(v+z))\rVert_{L^2}^2} \left( \underbrace{ \text{Vol}(M) \Phi_1^{\beta*}(2C_{N, \omega, T,\alpha})}_{{C'_{N, \omega, T,\alpha}}}+ \| z \|_{L^2}^2 - \| v \|_{L^2}^2\right)<0.
\end{align*}

Thus, we obtain 
\begin{align}\label{estimate on v}
\frac{d}{dt}\lVert v\rVert_{L^2}^2\leq  2\text{Vol}(M) \Phi^{\gamma*}_{\alpha C_2}(C_{N, \omega, T,\alpha})+2\alpha C_N\lVert v\rVert_{L^2}^2.
\end{align}
Then by applying the Gronwall's lemma, we obtain
\[
\|v(t)\|_{L^2}^2 
\le \|v(0)\|_{L^2}^2 e^{2\alpha C_Nt}
+ \frac{2\text{Vol}(M) \Phi^{\gamma*}_{\alpha C_2}(C_{N, \omega, T,\alpha})}{2\alpha C_N}\left(e^{2\alpha C_Nt}-1\right).
\]

Hence, \(\|v(t)\|_{L^2}\) remains finite for every finite time,
which concludes the proof.
\end{proof}
Therefore, we have the existence of the global solution $u_{\alpha}\in C_t(\mathbb{R}^+;E_N)$ of the problem (\ref{eq3}).
\subsection*{Uniqueness and continuity} By using the fact that $F\in C^{\infty}(E_N,E_N)$ and the mean value theorem, it is easy to prove the uniqueness and continuity with respect to the initial data. 
\subsection*{Adaptability} We have that the solution $z_{\alpha}(t)$ is adapted to $\mathcal{F}_t$ and since $v$ is constructed by using the fixed point theorem, then $v$ is also adapted, therefore $u$ is adapted to $\sigma(u_{0,N},\mathcal{F}_t)$.\\
Let us denote by $u_{\alpha}(t,u_{0,N})$ the unique stochastic solution to (\ref{eq3}).

\subsection{The Markov semi-group and stationnary measure}
Let us define the transition probability $P_t^{\alpha,N}(u_0,\Gamma)=\mathbb{P}(u_{\alpha}(t,P_Nu_0)\in \Gamma)$ with $u_0\in L^2$ and $\Gamma \in \text{Bor}(L^2)$ .\\
The Markov semi groups are given by:
\begin{align*}
    \mathcal{B}_t^{\alpha,N} f(u_0) &= \int_{L^2} f(v) P_t^{\alpha,N}(u_0, dv) \quad C_b(L^2)\to C_b(L^2);\quad 
    \mathcal{B}_t^{\alpha,N *} \lambda(\Gamma) = \int_{L^2} \lambda(du_0) P_t^{\alpha,N}(u_0, \Gamma) \quad P(L^2) \to P(L^2).
\end{align*}
From continuity of the solution with respect to initial data, Feller property holds: $
\mathcal{B}_{t}^{\alpha,N} C_b(L^2) \subset C_b(L^2).$

Let us define the associated global flow by
$
\phi_N^t: 
P_Nu_0 \to \phi_N^t P_Nu_0 \quad E_N\to E_N
,$
where \(\phi_N^t(P_Nu_0) := u(t,u_{0,N})\) represents the solution to (\ref{Galerkin_Trudinger}) starting at \(P_Nu_0\) and  set the corresponding Markov groups:       
$\phi_N^t f(v) = f(\phi_N^t(P_N v)) \quad C_b(L^2) \to C_b(L^2);\quad
\phi_N^{t*} \lambda(\Gamma) = \lambda(\phi_N^{-t}(\Gamma)) \quad P(L^2) \to P(L^2).
$
\begin{proposition}
    Let $u_{0,N}$ be a random variable in $E_N$ independent of $\mathcal{F}_t$ such that $\mathbb{E}M(u_{0,N})<\infty$ and $\mathbb{E}E(u_{0,N})<\infty$. Let u be the solution to (\ref{eq3}) starting at $u_{0,N}$. Then we have
    \begin{align}\label{estimate1}
      \mathbb{E}M(u)+\alpha \int_0^t\mathbb{E}\mathcal{M}(u)d\tau &= \mathbb{E}M(u_{0,N})+\alpha\frac{A^0_N}{2}t,\\ \label{estimate2}
      \mathbb{E}E(u)+\alpha \int_0^t\mathbb{E}\mathcal{E}(u)d\tau &\leq \mathbb{E}E(u_{0,N})+\frac{\alpha}{2}A^0_Nt+\frac{\alpha}{2}A^1_Nt+\frac{\alpha}{2}C_M(2\beta+1)A_N^{\frac{1}{2}}\int_0^t\mathbb{E}\left(\int_M(1+|u|^2)e^{\beta|u|^2}dx\right).
    \end{align}
\end{proposition}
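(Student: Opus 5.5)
The plan is to apply Itô's formula in the finite-dimensional space $E_N$ to the two functionals $M$ and $E$ along the solution $u=u_\alpha(t,u_{0,N})$ of \eqref{eq3}. Since $E_N$ is finite-dimensional, $M$ and $E$ are smooth on $E_N$, and \eqref{eq3} is a genuine finite-dimensional SDE with smooth drift and constant diffusion $\sqrt{\alpha}\,\mathrm{diag}(a_n)$. For a smooth $F$, Itô gives
\begin{align*}
dF(u)=F'(u;\mathrm{drift})\,dt+\frac{\alpha}{2}\sum_{n=1}^N a_n^2F''(u;e_n,e_n)\,dt+\sqrt{\alpha}\sum_{n=1}^N a_nF'(u;e_n)\,d\mathcal{B}_n .
\end{align*}
The Hamiltonian part of the drift contributes nothing: $M'(u;i(\Delta-1)u-iP_N((e^{\beta|u|^2}-1)u))=0$ by $\langle u,iw\rangle$-type cancellations, and similarly $E'(u;\cdot)=0$ because $E'(u)$ is exactly the Hamiltonian gradient, so $\langle E'(u), iE'(u)\rangle=0$. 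The dissipative part contributes $-\alpha\mathcal{M}(u)$ and $-\alpha\mathcal{E}(u)$ respectively, by the very definition of the dissipation rates computed above.

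For the mass, $M''(u;e_n,e_n)=\|e_n\|_{L^2}^2=1$. The Itô correction is therefore exactly $\frac{\alpha}{2}A^0_N$, which gives the identity \eqref{estimate1} once we take expectations. For the energy, I would compute
\begin{align*}
E''(u;e_n,e_n)=1+\lambda_n+\int_M\big(e^{\beta|u|^2}-1\big)|e_n|^2+2\beta\int_M e^{\beta|u|^2}(\mathrm{Re}(\bar u e_n))^2 .
\end{align*}
The first two terms sum to $A^0_N+A^1_N$. The rest is bounded by $(2\beta+1)\int_M(1+|u|^2)e^{\beta|u|^2}|e_n|^2$. Here I would use an eigenfunction sup bound of the form $\|e_n\|_{L^\infty}^2\le C_M\lambda_n^{1/2}$ (H\"ormander's bound on a surface, $\|e_n\|_\infty\lesssim\lambda_n^{(d-1)/4}$). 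Summing against $a_n^2$ then yields the term $\frac{\alpha}{2}C_M(2\beta+1)A^{1/2}_N\int_M(1+|u|^2)e^{\beta|u|^2}$, and we obtain \eqref{estimate2} as an inequality. One caveat: the low modes, in particular $\lambda_0=0$, are excluded from the noise since the sum starts at $n=1$, and $C_M$ absorbs the constants.

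The main obstacle is justifying that the martingale terms $\sqrt{\alpha}\int_0^t\sum a_nF'(u;e_n)\,d\mathcal{B}_n$ have zero expectation, given the exponential growth of $E$ and of the drift. I would localize with stopping times $\tau_R=\inf\{t:\|u(t)\|_{L^2}\ge R\}$. On $E_N$ all quantities are bounded on balls, so the stopped stochastic integrals are true martingales and the stopped identities hold. Global existence (Proposition \ref{Gwp of v}) gives $\tau_R\to\infty$ a.s. Then I would pass $R\to\infty$: for \eqref{estimate1}, use monotone convergence on the nonnegative part of $\mathcal{M}$ together with the lower bound \eqref{4}, which makes $\mathcal{M}+C_N$ nonnegative. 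For the $M$ and $E$ terms at time $t\wedge\tau_R$, use Fatou's lemma together with the integrability hypotheses on $u_{0,N}$. Obtaining equality rather than inequality in \eqref{estimate1} requires uniform integrability of $M(u(t\wedge\tau_R))$, which follows from the same Itô computation applied to $M^2$ and Gronwall. For \eqref{estimate2}, the Fatou inequality suffices, with $\mathcal{E}+\mathcal{M}+\tilde C\ge\mathcal{E}_0\ge0$ handled by moving $\mathcal{M}$ via \eqref{estimate1}.
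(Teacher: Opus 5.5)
Your proposal is correct and follows the paper's proof. You apply Itô's formula to $M$ and $E$, use the cancellation of the Hamiltonian drift, obtain the exact correction $\frac{\alpha}{2}A^0_N$ for the mass, and bound the second derivative of the exponential potential with the eigenfunction estimate $\|e_n\|_{L^\infty}^2\lesssim\lambda_n^{1/2}$. The only addition is your stopping-time localization, which rigorously justifies the vanishing of the martingale terms that the paper simply asserts; the second-moment bound your uniform-integrability step needs is available, because $E(u)\gtrsim\|u\|_{L^2}^4$ implies $\mathbb{E}M(u_{0,N})^2<\infty$ whenever $\mathbb{E}E(u_{0,N})<\infty$.
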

\begin{proof}
    Since $u\in C_t(\mathbb{R}^+;E_N)$ , then by applying the Ito's formula we have for $F(u)=M(u)$ or $E(u)$:
    
$$dF(u) = F'(u; du) +\frac{1}{2}\sum_{|n| \leq N} F''(u;\sqrt{\alpha}~ a_ne_n, \sqrt{\alpha}~a_ne_n) \, dt.$$ 
For $F=M(u)=\frac{1}{2}\lVert u\rVert_{L^2}^2$, since\\ $M'\left(u;i \big((\Delta-1) u - P_N ((e^{\beta\abs{u}^2}-1)u)\right)=\left\langle u,i \big((\Delta-1) u - P_N ((e^{\beta\abs{u}^2}-1)u)\right\rangle=0.$
\begin{align*}
\text{Thus}~~~M'(u; du) = -\alpha \mathcal{M}(u) \, dt + \sqrt{\alpha} \sum_{|n| \leq N} a_n(u, e_n) \, d\beta_n ~~\text{and}~~M''(u;\sqrt{\alpha}~a_ne_n,\sqrt{\alpha}~a_ne_n)=\alpha|a_n|^2.
\end{align*}
Therefore, integrating and taking the expectation while using the vanishing property of the martingale term, we obtain $$\mathbb{E}M(u)+\alpha \int_0^t\mathbb{E}\mathcal{M}(u)d\tau = \mathbb{E}M(u_{0,N})+\alpha\frac{A^0_N}{2}t.$$
For $F(u)=E(u)=\frac{1}{2}\lVert u\rVert_{L^2}^2+\frac{1}{2}\lVert \nabla u\rVert_{L^2}^2+\frac{1}{2\beta}\sum_{p\in \mathbb{N}^*}\frac{\beta^{p+1}}{(p+1)!}\lVert u\rVert_{L^{2p+2}}^{2p+2}$, since
\begin{align*}
E'\left(u~;i((\Delta-1) u - P_N ((e^{\beta\abs{u}^2}-1)u)\right)&=\left\langle (1-\Delta) u +(e^{\beta\abs{u}^2}-1)u~;~i\left((\Delta-1) u - P_N ((e^{\beta\abs{u}^2}-1)u)\right)\right\rangle \\
&=0
\end{align*}
we then have
\begin{align}\label{0}
    E'(u,du)&=-\alpha\mathcal{E}(u)dt+ \sqrt{\alpha} \sum_{|n| \leq N} a_n\left((1-\Delta) u +(e^{\beta\abs{u}^2}-1)u, e_n\right) \, d\beta_n \quad \text{and}\\ \label{1}
    E''(u;\sqrt{\alpha}~a_ne_n,\sqrt{\alpha}~a_ne_n)&\leq \alpha|a_n|^2\left( \lVert e_n\rVert^2_{L^2}+\lVert e_n\rVert^2_{\dot{H}^1}+\int_M(2\beta|u|^2+1)e^{\beta|u|^2}|e_n|^2dx\right).
\end{align} 
Now by using the Sogge's inequality \cite{Sogge1988} $\left(\lVert e_n\rVert_{L^\infty(M)}\leq C_M'\lambda_n^{\frac{1}{4}}\right)$, we obtain
\begin{align*}
E''(u;\sqrt{\alpha}~a_ne_n,\sqrt{\alpha}~a_ne_n)\leq \alpha|a_n|^2\left(1+\lambda_n+C_M(2\beta+1)\lambda_n^{\frac{1}{2}}\int_M(1+|u|^2)e^{\beta|u|^2}dx\right).
\end{align*}
Therefore by integrating and taking the expectation, while using the vanishing property of the martingale term, we obtain
\begin{align*}
\mathbb{E}E(u)+\alpha \int_0^t\mathbb{E}\mathcal{E}(u)d\tau \leq \mathbb{E}E(u_0)+\frac{\alpha}{2}A^0_Nt+\frac{\alpha}{2}A^1_Nt+\frac{\alpha}{2}C_M(2\beta+1)A_N^{\frac{1}{2}}\int_0^t\mathbb{E}\left(\int_M(1+|u|^2)e^{\beta|u|^2}dx\right).
\end{align*}
\end{proof}
\subsection*{Existence of Stationnary measure}
\leavevmode
\begin{theorem}
For any \( N \geq 2 \) and \( \alpha \in (0,1) \), there is a stationary measure $\mu_N^{\alpha}$
and we have the following estimates
\begin{align}\label{estimate3}
\int_{L^2} \mathcal{M}(u) \mu_{N}^{\alpha}(du) &=\frac{A_N^0}{2}\leq \frac{A^0}{2}\\ \label{estimate4}
\int_{L^2} \mathcal{E}(u) \mu_{N}^{\alpha}(du) &\leq C_5, 
\end{align}
where $C$ does not depend on $\alpha$ and $N$.

\end{theorem}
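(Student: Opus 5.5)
The plan is the classical Bogolyubov--Krylov argument applied to the Feller semigroup $\mathcal{B}_t^{\alpha,N}$, with tightness coming from the mass identity \eqref{estimate1}. The energy bound \eqref{estimate4} will then follow from \eqref{estimate2}, after the Itô correction term is absorbed into the energy dissipation.

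\textbf{Step 1: existence of $\mu_N^\alpha$.} Start the dynamics \eqref{eq3} from $u_{0,N}=0$. Define the time averages
\[
\bar\mu_T=\frac1T\int_0^T \mathcal{B}_t^{\alpha,N*}\delta_0\,dt .
\]
Apply \eqref{estimate1} and use the lower bound \eqref{4}, $\mathcal{M}(u)\ge \mathcal{M}_0(u)-C_N$. Since $\mathcal{M}_0(u)\ge \frac C2\lVert u\rVert_{L^2}^2$, this gives
\[
\frac{1}{T}\int_0^T\mathbb{E}\lVert u\rVert_{L^2}^2\,dt\le C(N,\alpha)
\]
uniformly in $T$. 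Because $E_N$ is finite dimensional, $L^2$ balls in $E_N$ are compact, so Chebyshev's inequality makes $(\bar\mu_T)$ tight. Prokhorov's theorem gives a weak limit $\mu_N^\alpha$ along a subsequence. The Feller property then shows that $\mu_N^\alpha$ is stationary in the usual way: $\mathcal{B}_s^{\alpha,N*}\bar\mu_T-\bar\mu_T$ has total variation $O(s/T)$.

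\textbf{Step 2: the mass identity \eqref{estimate3}.} Run the dynamics from an initial datum distributed according to $\mu_N^\alpha$, independent of the noise. First I need $\int\lVert u\rVert_{L^2}^2\,d\mu_N^\alpha<\infty$, which is where the Itô formula is legitimate. I get it by truncation: pass to the limit in the time average of $\min(\mathcal{M}_0,R)$, which is lower semicontinuous, then let $R\to\infty$. By stationarity $\mathbb{E}M(u(t))$ is constant in $t$, so \eqref{estimate1} becomes $\alpha t\int\mathcal{M}\,d\mu_N^\alpha=\alpha t A^0_N/2$. This gives equality with $A^0_N/2$, and the bound $A^0_N\le A^0$ is immediate.

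\textbf{Step 3: the energy bound \eqref{estimate4}.} Apply \eqref{estimate2} under stationarity, again after truncation of $E$ to justify finiteness. This gives
\[
\int\mathcal{E}\,d\mu_N^\alpha\le \frac{A^0+A^1}{2}+\frac{C_M(2\beta+1)A^{1/2}}{2}\int\!\!\int_M(1+|u|^2)e^{\beta|u|^2}\,dx\,d\mu_N^\alpha .
\]
The crux, and the step I expect to be the main obstacle, is to bound the last integral uniformly in $N$ and $\alpha$. I would control it by the mass dissipation, which is already pinned down by \eqref{estimate3}. Pointwise,
\[
(1+|u|^2)e^{\beta|u|^2}\le c_\gamma\bigl(1+|u|^2e^{\gamma|u|^2}\bigr),
\]
since $\gamma>\beta$. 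Integrating over $M$ gives $\lesssim \mathrm{Vol}(M)+\sum_p\frac{\gamma^p}{p!}\lVert u\rVert_{L^{2p+2}}^{2p+2}$, and this sum is at most $\frac{2}{C_2}\mathcal{M}_0(u)$. Then I control $\mathcal{M}_0$ by $\mathcal{M}+C_N$.

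That last move is dangerous, because $C_N$ is not uniform in $N$. I would therefore avoid \eqref{3} here and instead use that $\mathcal{K}_N(u)\ge0$. Indeed, using self-adjointness of $P_N$ and $(1-\Delta)^{-1/2}$,
\[
\langle u,(1-\Delta)^{-1/2}(|w|^\delta w)\rangle=\int|w|^{2+\delta}\ge0,\qquad w=(1-\Delta)^{1/2}u .
\]
Hence $\mathcal{M}\ge \mathcal{V}_N\ge \frac{C_2}{1}\sum_p\frac{\gamma^p}{p!}\lVert u\rVert_{L^{2p+2}}^{2p+2}$, and the double integral is at most $c_\gamma\bigl(\mathrm{Vol}(M)+\frac{A^0}{2C_2}\bigr)$.

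Altogether, $\int\mathcal{E}\,d\mu_N^\alpha\le C_5$ with
\[
C_5=\frac{A^0+A^1}{2}+\frac{C_M(2\beta+1)A^{1/2}c_\gamma}{2}\Bigl(\mathrm{Vol}(M)+\frac{A^0}{2C_2}\Bigr),
\]
which is independent of $\alpha$ and $N$. One could also pass through the lower bound $\mathcal{E}\ge\mathcal{E}_0-\mathcal{M}-\tilde C$ to control $\mathcal{E}_0$, but this only adds further constants, $A^0/2$ and $\tilde C$, and these too are independent of $\alpha$ and $N$.
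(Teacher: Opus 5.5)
Your Steps 1 and 2 are fine; they are the paper's Bogolyubov--Krylov and truncation argument. The crux of Step 3, however, rests on a miscomputation. Self-adjointness moves $(1-\Delta)^{-1/2}$ onto $u$, which produces $(1-\Delta)^{-1/2}u=(1-\Delta)^{-1}w$, not $w$. So
\[
\mathcal{K}_N(u)=\bigl\langle (1-\Delta)^{-1}w,\,|w|^\delta w\bigr\rangle,\qquad w=(1-\Delta)^{1/2}u .
\]
The quantity $\int_M|w|^{2+\delta}=\lVert u\rVert_{W^{1,2+\delta}}^{2+\delta}$ you wrote is the pairing of this dissipation term with $(1-\Delta)u$. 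That is its contribution to $\mathcal{E}$, not to $\mathcal{M}$; at $\delta=0$ your identity would read $\lVert u\rVert_{L^2}^2=\lVert u\rVert_{H^1}^2$.

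The correct pairing has no sign, and it is not controlled by $\mathcal{V}_N$ uniformly in $N$. On $\mathbb{T}^2$, take $w=c+A\,g(kx_1)$ with $c>0$ and $g$ a mean-zero trigonometric polynomial with $\int_0^1|g|^\delta g\,ds<0$. For example, let $g\approx 1$ on a proportion $p$ of the period and $g\approx -a$ elsewhere, with $a>1$ and $p=a(1-p)$. Then $u=c+O(A/k)$ in $L^\infty$, so $\mathcal{V}_N(u)$ stays bounded when $k\gg A$ and $N$ is large. Meanwhile $\mathcal{K}_N(u)=c\int_M|w|^\delta w\,dx+O(A^{2+\delta}k^{-2})\approx c\,A^{1+\delta}\,|\mathbb{T}^2|\int_0^1|g|^\delta g\,ds$, which tends to $-\infty$ as $A\to\infty$.

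Hence $\mathcal{M}\ge\mathcal{V}_N$ is false. The identity $\int\mathcal{M}\,d\mu_N^\alpha=A^0_N/2$ alone gives no $N$-uniform bound on $\int\sum_p\frac{\gamma^p}{p!}\lVert u\rVert_{L^{2p+2}}^{2p+2}\,d\mu_N^\alpha$. The only $N$-uniform bounds on $|\mathcal{K}_N|$ are of the form $\lVert u\rVert_{L^2}\lVert u\rVert_{W^{1,2+\delta}}^{1+\delta}$, and these involve energy-dissipation quantities.

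The missing idea is that the Itô correction must be absorbed by the energy dissipation, not by the mass dissipation; this is the paper's route.
\begin{itemize}
\item Use $\mathcal{E}\ge\mathcal{E}_0-\mathcal{M}-\tilde C$ together with the exact value $\int\mathcal{M}\,d\mu_N^\alpha=A^0_N/2$, which holds whatever the sign of $\mathcal{K}_N$.
\item Bound $\int_M(1+|u|^2)e^{\beta|u|^2}dx\le 2e^\beta|M|+2\sum_{p\ge1}\frac{\beta^p}{p!}\lVert u\rVert_{L^{2p+2}}^{2p+2}$.
\item Observe that $\mathcal{E}_0\ge\frac C2\sum_{p}\frac{\beta^p}{p!}\lVert u\rVert_{L^{2p+2}}^{2p+2}$, since the exponential prefactor is at least $1$.
\end{itemize}
Choosing the free constant $C$ large compared with $C_M(2\beta+1)A^{1/2}$ then absorbs the correction into $\int\mathcal{E}_0\,d\mu_N^\alpha$. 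This gives $\int\mathcal{E}_0\,d\mu_N^\alpha\le C_5'$, and then $\int\mathcal{E}\,d\mu_N^\alpha\le C_5$ uniformly in $(\alpha,N)$.

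Your closing remark mentions $\mathcal{E}\ge\mathcal{E}_0-\mathcal{M}-\tilde C$ only as an optional extra. It is in fact the step that carries the argument, together with the large-$C$ absorption, which your proposal never uses.
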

\begin{proof}
Let $B_R$ be the closed ball of $E_N$ with center $0$ and raduis $R.$ By using the Bogoliubov-Krylov argument, let us find the measure $\lambda$ such that the sequence $\{\Bar{\lambda_t}\}_{t\geq 0}$ is tight where $\Bar{\lambda_t}=\frac{1}{t}\int_0^t\mathcal{B}_{\tau}^{\alpha,N,*}\lambda ~d\tau.$\\
Let $R>0$, since we are in finite dimensional $E_N$, all the norm are equivalents and the ball $B_R(E_N)$ is compact in $E_N$, It is easy to see by using the Chebyshev's inequality, the estimates (\ref{estimate1}~-~\ref{4}) and by choosing  
 $\lambda=\delta_0$ the dirac measure concentrated at 0; that $\bar{\lambda}_t(B_R^c)\leq \frac{C_N}{R^2}$.\\
Therefore, we have the tightness of $\{\Bar{\lambda_t}\}_{t\geq 0}$, then by Prokhorov theorem there is subsequence $ \{t_n\}_{n\in \mathbb{N}}\subset \tau_+$ and measure $\mu_N^{\alpha}$ supported on $E_N$  such that $\bar{\lambda}_{t_n} \rightharpoonup \mu_N^{\alpha}$ weakly in $P(L^2)$ and $\mu_N^{\alpha}$ is invariant.\\
Let us now prove the estimates (\ref{estimate3}) and (\ref{estimate4})

\subsection*{Estimates} Let $\chi \in C^{\infty}$ be a smooth function such that
$\chi(x) = 1 ~\text{for } x \in [0,1], \text{and}\\
0  ~\text{for } x \in [2,\infty)$
 and let $\chi_R(x)=\chi(\frac{x}{R})$.\\
Since the map $u \to \lVert u\rVert_{L^2}^2\chi_R(\lVert{u}\rVert_{L^2})$ is continuous and bounded on $L^2$, then we have by using the fact $\lVert u\rVert_{L^2}^2\leq \mathcal{M}(u)+C_N$
\begin{align*}
\int_{L^2}\lVert u\rVert_{L^2}^2\chi_R(\lVert{u}\rVert_{L^2}) \bar{\lambda}_{t_n}(du)\leq \int_{L^2}\lVert u\rVert_{L^2}^2\bar{\lambda}_{t_n}(du)\leq C_N+\frac{1}{t_n}\int_0^{t_n}\mathbb{E}\mathcal{M}(u) d\tau\leq C_N+\frac{A^0_N}{2}.
\end{align*}
By passing to the limit $t_n \to \infty,R\to \infty$ and using the Fatou's lemma, we obtain $\mathbb{E}M(u_{0,N})<\infty.$
Therefore using (\ref{estimate1}), and the fact that $\mu_{N}^{\alpha}$ is invariant, we obtain: $\int_{L^2} \mathcal{M}(u) \mu_{N}^{\alpha}(du) =\frac{A_N^0}{2}\leq \frac{A^0}{2}.$\\
On the other hand, we have by using the fact that we are in finite dimensionality\\ $E(u)=\frac{1}{2}\lVert u\rVert_{L^2}^2+\frac{1}{2}\lVert \nabla u\rVert_{L^2}^2+\frac{1}{2\beta}\sum_{p\in \mathbb{N}^*}\frac{\beta^{p+1}}{(p+1)!}\lVert u\rVert_{L^{2p+2}}^{2p+2}\leq C_N\mathcal{M}(u)+C'_N$ and then 
\begin{align*}
\int_{L^2}E(u) \mu_{N}^{\alpha}(du)=\int_{L^2}\left(\frac{1}{2}\lVert u\rVert_{L^2}^2+\frac{1}{2}\lVert \nabla u\rVert_{L^2}^2+\frac{1}{2\beta}\sum_{p\in \mathbb{N}^*}\frac{\beta^{p+1}}{(p+1)!}\lVert u\rVert_{L^{2p+2}}^{2p+2}\right)\mu_{N}^{\alpha}(du)<\infty,  ~~\text{thus}~~\mathbb{E}E(u_{0,N})<\infty.
\end{align*}
Let us remark also that 
\begin{align}\label{2}
    \int_M(1+|u|^2)e^{\beta|u|^2}dx&
    \leq 2e^\beta |M|+2\int_Me^{\beta|u|^2}|u|^2dx
    \leq 2e^\beta|M|+2\sum_{p\in\mathbb{N}^*}\frac{\beta^p}{p!}\lVert u\rVert_{L^{2p+2}}^{2p+2}.
\end{align}
Since the term $ C_M(2\beta+1)A^{\frac{1}{2}}\sum_{p\in\mathbb{N}^*}\frac{\beta^p}{p!}\lVert u\rVert_{L^{2p+2}}^{2p+2}$ can easily be reabsorbed by the term $\frac{C}{2}\sum_{p\in\mathbb{N}^*}\frac{\beta^p}{p!}\lVert u\rVert_{L^{2p+2}}^{2p+2}$ because the constant $C$ is large enough;
then, by using (\ref{estimate2}) and the invariant measure, we have:\\$\int_{L^2} \mathcal{E}_0(u) \mu_{N}^{\alpha}(du) \leq C'_5$ and it follows that $\int_{L^2}\mathcal{E}(u)\mu_{N}^\alpha(du)\leq C_5.$
\end{proof}
On the other hand, a simple application of Ito's formula to $F_R(u)=\lVert{u}\rVert_{L^2}^2\left(1-\chi_R(\lVert{u}\rVert_{L^2}^2)\right)$ by using the estimate \eqref{estimate3} gives us the estimate:
    
    \begin{align}\label{estimate5}
   \forall R>0,\quad \int_{L^2} \mathcal{M}(u)(1-\chi_R(\lVert{u}\rVert_{L^2}^2)) \mu_{N}^{\alpha}(du) \leq C_6R^{-1} \quad \text{ where $C_6$ is independent of $\alpha,N.$}
    \end{align}
In the following, we pass the inviscid limit $\alpha \to 0$.
\subsection{Inviscid Limit}
\leavevmode
\begin{proposition}
    Let $N\geq 2$, there is a measure $\mu_N$ that is invariant under the flow $\phi_N^t$ and satisfies the following estimates:
    \begin{align}\label{estimate6}
     \int_{L^2} \mathcal{M}(u) \mu_{N}(du) &=\frac{A^0_N}{2}\\
     \label{estimate8}
\int_{L^2} \mathcal{E}_0(u) \mu_{N}(du) &\leq C_7, 
\end{align} 
\text{which gives the crucial estimate}
\begin{align}\label{crucial estimate}
\int_{L^2} \bigg(e^{\gamma C_1\lVert P_N(e^{\beta\abs{u}^2}u)\rVert_{L^2}^2}\left( \lVert u\rVert_{H^1}^2+\|P_N( u e^{\beta |u|^2}) \|_{L^2}^2 \right) +\sum_{p \in \mathbb{N}} \frac{\gamma^p}{p!} \lVert  u^{p+1} \rVert_{H^1}^2+\lVert u\rVert_{W^{1,2+\delta}}^{2+\delta}   \bigg) \mu_N(du) \leq C_7'.
\end{align}
Furthermore, we have also the estimate
\begin{equation}\label{estimate7}
   \int_{L^2}E(u)\mathcal{E}_0(u)\mu_N(du)\leq C_8 \quad \text{where $C_8$ does not depend on $N$}.
\end{equation}
\end{proposition}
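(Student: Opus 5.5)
Fix $N\geq 2$. The plan is to obtain $\mu_N$ as a weak limit of the stationary measures $\mu_N^\alpha$ as $\alpha\to 0$, and then pass the uniform bounds \eqref{estimate3}--\eqref{estimate4} and \eqref{estimate5} to the limit.

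\textbf{Compactness.} Since $E_N$ is finite dimensional and $\mathcal{M}(u)\geq \mathcal{M}_0(u)-C_N\gtrsim \lVert u\rVert_{L^2}^2-C_N$ by \eqref{4}, estimate \eqref{estimate3} and Chebyshev give $\mu_N^\alpha(B_R^c)\lesssim_N R^{-2}$ uniformly in $\alpha$. Prokhorov then yields $\alpha_k\to0$ and $\mu_N$ supported on $E_N$ with $\mu_N^{\alpha_k}\rightharpoonup\mu_N$.

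\textbf{Invariance.} Invariance of $\mu_N$ under $\phi_N^t$ follows the standard route. For $f\in C_b(L^2)$ Lipschitz, I compare $\mathcal{B}_t^{\alpha,N}f(u_0)$ with $f(\phi_N^t P_Nu_0)$. For $u_0$ in a ball $B_R$ and $t\le T$, the solution of \eqref{eq3} stays close to the Galerkin flow, with error $O(\alpha t\sup\lvert\mathcal{L}\rvert+\sqrt{\alpha}\sup_{[0,T]}\lVert\mathcal{W}_N\rVert)$. This holds because the drift $\mathcal{L}$ is smooth, hence bounded and Lipschitz, on bounded sets of $E_N$, and the stochastic part is controlled by Doob's inequality. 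A Gronwall argument on the exit time from a larger ball then shows $\mathbb{E}\lvert f(u_\alpha(t,u_0))-f(\phi_N^tu_0)\rvert\to0$ uniformly on $B_R$. The tail of $\mu_N^\alpha$ is handled by the uniform tightness. Writing $\int \mathcal{B}_t^{\alpha,N}f\,d\mu_N^\alpha=\int f\,d\mu_N^\alpha$ and letting $\alpha_k\to0$ gives $\int f\circ\phi_N^t\,d\mu_N=\int f\,d\mu_N$, using continuity of $\phi_N^t$ on $E_N$.

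\textbf{Estimates.} The bound \eqref{estimate8} follows from lower semicontinuity. Since $\mathcal{E}_0$ is continuous and nonnegative on $E_N$, Fatou's lemma (or weak convergence applied to the truncations $\min(\mathcal{E}_0,K)$) gives $\int\mathcal{E}_0\,d\mu_N\le\liminf\int\mathcal{E}_0\,d\mu_N^{\alpha}$. This is bounded because $\mathcal{E}=\mathcal{E}_0-\mathcal{M}-\tilde C$ is bounded in mean by \eqref{estimate4} and $\mathcal{M}$ by \eqref{estimate3}, uniformly in $\alpha,N$. Since every term in \eqref{crucial estimate} appears in $2\mathcal{E}_0/\min(C,C_2,1)$, the crucial estimate \eqref{crucial estimate} follows directly.

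The \emph{equality} \eqref{estimate6} is the delicate part, because Fatou only gives $\le$. I would use \eqref{estimate5}. Split $\mathcal{M}=\mathcal{M}\chi_R+\mathcal{M}(1-\chi_R)$. The first piece is bounded and continuous on $E_N$, so it passes to the limit exactly. The second has $\mu_N^\alpha$-integral at most $C_6R^{-1}$ uniformly in $\alpha$, and $\mu_N$-integral at most $C_6R^{-1}$ by Fatou, so it costs $O(R^{-1})$ on both sides. Letting $R\to\infty$ gives $\int\mathcal{M}\,d\mu_N=A_N^0/2$. Here $\mathcal{M}$ may be negative because of the $\mathcal{K}_N$ term, but $\mathcal{M}\ge -C_N$ on $E_N$, which is enough for Fatou.

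\textbf{Main obstacle: \eqref{estimate7}.} I expect the product bound \eqref{estimate7} with $C_8$ independent of $N$ to be the hardest step. The plan is to apply Itô's formula at the level $\alpha>0$ to the functional $E(u)^2/2$. The flow part vanishes by conservation of $E$. The dissipative part gives $-\alpha E(u)\mathcal{E}(u)$. The Itô correction contributes $\alpha\bigl(E\sum a_n^2E''(u;e_n,e_n)+\sum a_n^2\langle E'(u),e_n\rangle^2\bigr)/2$. Stationarity of $\mu_N^\alpha$ makes $\int E\,\mathcal{E}\,d\mu_N^\alpha$ equal to the mean of this correction, after cancelling $\alpha$. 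The term $\sum a_n^2\langle E'(u),e_n\rangle^2\le A^1\lVert u\rVert_{H^1}^2+A^0\lVert (e^{\beta|u|^2}-1)u\rVert_{L^2}^2$ is controlled by $\mathcal{E}_0$. The term $E\sum a_n^2E''$ is bounded, via Sogge's inequality and \eqref{2}, by $E\cdot(A^0+A^1+C A^{1/2}\sum_p\frac{\beta^p}{p!}\lVert u\rVert_{L^{2p+2}}^{2p+2})$. This must be absorbed into $\frac12 E\,\mathcal{E}_0$, using that $C$ is large, exactly as in the proof of \eqref{estimate4}. The remaining cross terms $E\,\mathcal{M}$ and $E\tilde C$ are absorbed by Young's inequality into $\epsilon E\mathcal{E}_0+C_\epsilon\mathcal{E}_0$, since $E\lesssim\mathcal{E}_0$. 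Finally I pass $\alpha\to0$ by Fatou as above. The delicate point, which I would check first, is justifying finiteness of $\int E^2\,d\mu_N^\alpha$ so the martingale term vanishes. I would handle this by localizing with $\chi_R$ as in \eqref{estimate5}.
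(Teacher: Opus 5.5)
Your compactness, invariance, $\mathcal{E}_0$-bound and mass-identity steps follow the paper. That covers Prokhorov on $E_N$, the comparison of stochastic and deterministic Galerkin flows on balls (your exit-time bootstrap is an equivalent substitute for the paper's event $S_r$), lower semicontinuity for \eqref{estimate8} and \eqref{crucial estimate}, and the $\chi_R$ truncation with \eqref{estimate5} for \eqref{estimate6}.

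The gap is in \eqref{estimate7}. After stationarity for $E^2/2$ and the lower bound $\mathcal{E}\geq\mathcal{E}_0-\mathcal{M}-\tilde C$, the good term $E\,\mathcal{E}_0$ and the bad term $E\,\mathcal{M}$ both enter with coefficient one. You claim $E\mathcal{M}$ is absorbed ``by Young's inequality into $\epsilon E\mathcal{E}_0+C_\epsilon\mathcal{E}_0$, since $E\lesssim\mathcal{E}_0$''. This is unfounded. A pointwise bound $E\mathcal{M}\le\epsilon E\mathcal{E}_0+C_\epsilon\mathcal{E}_0$ forces $\mathcal{M}\le 2\epsilon\mathcal{E}_0$ wherever $E\geq C_\epsilon/\epsilon$, so it requires $\mathcal{M}$ to be small relative to $\mathcal{E}_0$, and nothing at hand gives that. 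For instance, $C_2\sum_p\frac{\gamma^p}{p!}\lVert u\rVert_{L^{2p+2}}^{2p+2}$ enters $\mathcal{M}$ with coefficient $C_2$, but enters $\mathcal{E}_0$ (through $\lVert u^{p+1}\rVert_{H^1}^2$) only with $C_2/2$. The available comparison is therefore of the form $\mathcal{M}\lesssim\mathcal{E}_0+C$ with a constant that is not small. Then $\mathbb{E}\,E\mathcal{M}$ is of the same size as the quantity you want to bound, and the estimate does not close.

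The missing idea is that $E\mathcal{M}$ must itself appear as a dissipation term with a good sign. The paper does this by also using Itô's formula under stationarity for $ME$ and for $M^2$.
\begin{itemize}
\item In the $ME$ identity, $E\mathcal{M}+M\mathcal{E}$ sits on the dissipative side. You write $M\mathcal{E}\geq M\mathcal{E}_0-M\mathcal{M}-\tilde C M$ and absorb the correction $M\sum_n a_n^2E''(u;e_n,e_n)$ into $M\mathcal{E}_0$, using that $C$ is large.
\item The remaining $\mathbb{E}\,M\mathcal{M}\lesssim A^0\,\mathbb{E}M$ comes from the $M^2$ identity.
\end{itemize}
This gives $\int E\mathcal{M}\,d\mu_N^\alpha\le C$ uniformly in $(\alpha,N)$, which is then fed into the $E^2$ computation.

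Equivalently, run your argument with $(E+M)^2$ in place of $E^2$. The functional $E+M$ is conserved by \eqref{Galerkin_Trudinger}, and its dissipation rate is $\mathcal{E}+\mathcal{M}\geq\mathcal{E}_0-\tilde C$, so no $\mathcal{M}$ cross term arises. The Itô corrections are handled exactly as you describe, and $E\,\mathcal{E}_0\le(E+M)\,\mathcal{E}_0$ then yields \eqref{estimate7}.
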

\begin{proof}
\leavevmode
\subsection*{Existence}
Thanks to the estimate (\ref{estimate3} or
 \ref{estimate4}), we have the weak compactness of any sequence $(\mu_N^{\alpha} )_{\alpha\in (0,1)}$ with respect to the topology of $L^2$, therefore there exists a subsequence $(\mu_N^{\alpha_j})_j$ and a measure $\mu_N$ such that $(\mu_N^{\alpha_j})_j$ converges to $\mu_N$. 
\subsection*{Estimates}
Since $\mu_N^\alpha$ and $ \mu_N $ are supported on $E_N$ and we are actually working in a finite dimensional space, the functions $\mathcal{M}, \mathcal{E}_0$ are continuous. Hence, the estimate \eqref{estimate8} follow from \eqref{estimate3},\eqref{estimate4}, the lower semicontinuity and boundary from below of $\mathcal{E}_0(u)$.\\Let us prove the estimate \eqref{estimate6}. Since we are in finite dimensionality, $\chi_R(\lVert u\rVert_{L^2}^2)\mathcal{M}$ is continuous and bounded, we have 
\begin{align*}
\frac{A_N^0}{2} - \int_{L^2} (1 - \chi_R(\lVert{u}\rVert_{L^2}^2)) \mathcal{M}(u) \mu_{N}^{\alpha_j} (du) = \int_{L^2} \chi_R(\lVert{u}\rVert_{L^2}^2) \mathcal{M}(u) \mu_{N}^{\alpha_j} (du).
\end{align*}
Now, by using \eqref{estimate5}, we will have
\begin{align*}
\frac{A_N^0}{2} - C_7 R^{-1} \leq \int_{L^2} \chi_R(\lVert{u}\rVert_{L^2}^2) \mathcal{M}(u) \mu_{N}^{\alpha_j} (du) .
\end{align*}
Leaving $j\to \infty$ and $R\to \infty$, we arrive at $\frac{A_N^0}{2}\leq \int_{L^2} \mathcal{M}(u) \mu_{N}(du) $.\\
On the other hand, since $\mathcal{M}$ is lower semi continuous and bounded from below, we have
\begin{align*}
    \int_{L^2}\mathcal{M}(u)\mu_N(du)\leq \int_{L^2}\mathcal{M}(u)\mu_N^{\alpha_j}(du)=\frac{A_N^0}{2}.
\end{align*}
 Let us now prove the estimate \eqref{estimate7}. By applying the Ito's formula to $F.G $ where $F ~\text{or}~ G=E(u)$ or $M(u)$, and $u$ is the stationary solution to \eqref{eq3}, we have:
\begin{align}
    dFG &= F(u) G'(u;du)+G(u)F'(u,du)+ \frac{1}{2} \sum_{m=0}^{N}2F'(u,\sqrt{\alpha}~a_me_m)G'(u,\sqrt{\alpha}~a_me_m)\label{Ito's formula}\\
    &\quad+ \frac{1}{2} \sum_{m=0}^{N} \left(  F(u) G''(u,\sqrt{\alpha}~ a_me_m, \sqrt{\alpha}~a_me_m)+G(u) F''(u, \sqrt{\alpha}~a_me_m, \sqrt{\alpha}a_me_m) \right).\nonumber
  \end{align}  
    \begin{align}
   dE^2&= -2\alpha E(u)\mathcal{E}(u)+ 2\sqrt{\alpha} \sum_{m=0}^{N} E(u)E'(u, a_m e_m) d\beta_m + \frac{\alpha}{2} \sum_{m=0}^{N}  2 |a_m|^2\left( E(u) E''(u, e_m, e_m) +  (E'(u, e_m))^2\right) .\nonumber
\end{align}
Now, by using the estimate \eqref{1}and \eqref{2} combined with the Sogge's inequality\cite{Sogge1988}, we have:
\begin{align*}
    \mathbb{E}E^2(u)+2\alpha\int_0^t&\mathbb{E}E(u)\mathcal{E}(u)d\tau\leq \mathbb{E}E^2(u_0)+\alpha\left( A_N^0+ A_N^1+C_{M,\beta} A_N^{\frac{1}{2}}\right)\int_0^t\mathbb{E}E(u)d\tau\\&+\alpha A_N^{\frac{1}{2}}C'_{M,\beta}\int_0^t\mathbb{E}\left(E(u)\left(\int_Me^{\beta|u|^2}|u|^2dx\right)\right)d\tau+\alpha(A^1_N+A^0_N)\int_0^t\mathbb{E}\mathcal{E}_0(u)d\tau.
\end{align*}
Since $\mathcal{E}(u)\geq \mathcal{E}_0(u)-\mathcal{M}(u)-\tilde{C} $, we then obtain
\begin{align}\label{estimate energy-energy}
  &\mathbb{E}E^2(u)+2\alpha\int_0^t\mathbb{E}E(u)\mathcal{E}_0(u)d\tau\leq \mathbb{E}E^2(u_0)+\alpha\left( A_N^0+ A_N^1+C_{M,\beta} A_N^{\frac{1}{2}}\right)\int_0^t\mathbb{E}E(u)d\tau\\
  &+\alpha A_N^{\frac{1}{2}}C'_{M,\beta}\int_0^t\mathbb{E}\bigg(E(u)\bigg(\int_Me^{\beta|u|^2}|u|^2\bigg)\bigg)d\tau+2\alpha\int_0^t\mathbb{E}E(u)\mathcal{M}(u)d\tau+\alpha(A^1_N+A^0_N)\int_0^t\mathbb{E}\mathcal{E}_0(u)d\tau+2\alpha \tilde{C}t\nonumber.  
\end{align}
In order the control $\mathbb{E}E(u)\mathcal{M}(u)$, let us apply the Ito's formula \eqref{Ito's formula} with $F=M(u)$ and $G=E(u)$.
\begin{align}\label{estimate mass-energy}
    \mathbb{E}M(u)E(u)&+\alpha\int_0^t\mathbb{E}E(u)\mathcal{M}(u)d\tau+\alpha\int_0^t\mathbb{E}M(u)\mathcal{E}(u)d\tau\leq \mathbb{E}M(u_0)E(u_0)+\alpha C\int_0^t\mathbb{E}\mathcal{E}_0(u)d\tau \\&+\frac{\alpha}{2}\int_0^t(A_N^0+A_N^1+C_{\beta,M}A_N^{\frac{1}{2}})\mathbb{E}M(u)d\tau
    +\frac{\alpha}{2}C'_{\beta,M}A_N^{\frac{1}{2}}\int_0^t\mathbb{E}M(u)\left(\int_Me^{\beta|u|^2}|u|^2dx\right)d\tau\nonumber.
\end{align}
Since $\mathcal{E}(u)\geq \mathcal{E}_0(u)-\mathcal{M}(u)-\tilde{C} $, we then obtain:
\begin{align}\label{estimate mass-energy'}
    &\mathbb{E}M(u)E(u)+\alpha\int_0^t\mathbb{E}E(u)\mathcal{M}(u)d\tau+\alpha\int_0^t\mathbb{E}M(u)\mathcal{E}_0(u)d\tau\leq \mathbb{E}M(u_0)E(u_0)+\alpha C\int_0^t\mathbb{E}\mathcal{E}_0(u)d\tau \\&+\frac{\alpha}{2}\int_0^t(A_N^0+A_N^1+C_{\beta,M}A_N^{\frac{1}{2}})\mathbb{E}M(u)d\tau
    +\frac{\alpha}{2}C'_{\beta,M}A_N^{\frac{1}{2}}\int_0^t\mathbb{E}M(u)\left(\int_Me^{\beta|u|^2}|u|^2dx\right)d\tau \nonumber\\
&+\alpha\int_{0}^t\mathbb{E}M(u)\mathcal{M}(u)d\tau+\alpha \tilde{C}t\nonumber.
\end{align}

We can easily control the term $\mathbb{E}M(u)\mathcal{M}(u)$ by applying the Ito's formula with $F=G=M(u)$,
\begin{align}\label{estimte mass-mass}
    \mathbb{E}M^2(u)+2\alpha\int_0^t\mathbb{E}M(u)\mathcal{M}(u)d\tau\leq \mathbb{E}M^2(u_0)+3\alpha A_N^0\int_0^t\mathbb{E}M(u)d\tau.
\end{align}
Using the invariance measure $\mu_N^\alpha$ and replacing \eqref{estimte mass-mass} in \eqref{estimate mass-energy'} then \eqref{estimate mass-energy'} in \eqref{estimate energy-energy} while using the fact $\mathbb{E}_{\mu_N^\alpha}\mathcal{E}_0(u)\leq C_7$ and noticing that the term $C'_{\beta,M}A^{\frac{1}{2}}~\mathbb{E}(M(u)+E(u))\left(\int_Me^{\beta|u|^2}|u|^2dx\right)$ can be reabsorbed by $\mathbb{E}E(u)\mathcal{E}_0(u)$, we obtain 
\begin{align}
   \int_{L^2}E(u)\mathcal{E}_0(u)\mu_N^\alpha(du)\leq C_8.\label{Est_E.Cal_E} 
\end{align}
 
The lower semi-continuity and bounded below property of $E(u)\mathcal{E}_0(u)$ and the convergence $\mu_N^{\alpha_j}\to \mu_N$ give us the claim.
\subsection*{Invariance}
It is enough to show only the invariance under $\phi^t_N$ for $t>0$; because for $t<0$, by using the invariance for positives times, 
$\mu_N(\Gamma)=\mu_N(\phi^t_N\Gamma)=\mu_N(\phi^{2t}_N\phi_N^{-t}\Gamma)=\mu_N(\phi_N^{-t}\Gamma)$ which is the aim.\\
Now the proof of the invariance for positives times is summarized in the following diagram
\[
\begin{tikzcd}
\mathcal{B}^{\alpha_j,N*}_t \mu_{N}^{\alpha_j} \arrow[r,equal, "\text{(I)}"] \arrow[d, "\text{(III)}"'] & \mu_{N}^{\alpha_j} \arrow[d, "\text{(II)}"] \\
\Phi_N^{t*} \mu_N \arrow[r,equal, "\text{(IV)}"'] & \mu_N
\end{tikzcd}
\]
Our goal is to prove the equality (IV) and to achieve this, we just need to prove the convergence (III). For that, let $f: L^2 \to \mathbb{R}$ be a lipschitz function that is also bounded by $1$, we have 
$$
    (\mathcal{B}_t^{\alpha_j,N*}\mu_N^{\alpha_j},f)-(\phi_N^{t*}\mu_N,f)=(\mu_N^{\alpha_j},\mathcal{B}_t^{\alpha_j,N}f)-(\mu_N,\phi_N^tf)
    =(\mu_N^{\alpha_j},\mathcal{B}_t^{\alpha_j,N}f-\phi_N^tf)-(\mu_N-\mu_N^{\alpha_j},\phi_N^tf)
    = A_1-A_2.
$$
By using the Feller property of $\phi_N^t$ and the boundedness of $f$, we see that $A_2 \to 0$ as $j\to \infty$ and
$$\abs{A_1}\leq \int_{B_R(L^2)}\abs{\phi_N^tf(u_0)-\mathcal{B}_t^{\alpha_j,N}f(u_0)}\mu_N^{\alpha_j}(du_0)+2\mu_N^{\alpha_j}(L^2\setminus B_R(L^2))=A_3+A_4.$$
We have 
\begin{align*}
    A_3&= \int_{B_R(L^2)}\left|\left(\int_{L^2}f(v)P_t^{\alpha_j,N}(u_0,dv)\right)-\phi_N^tf(u_0) \right|\mu_N^{\alpha_j}(du_0)\\
    &=\int_{B_R(L^2)}\left|\left( \int_{\Omega}f(u_{\alpha_j}(t,P_Nu_0))d\mathbb{P}\right)- f(\phi_N^tP_Nu_0) \right|\mu_N^{\alpha_j}(du_0)\\
    &\leq \int_{B_R(L^2)}\left( \int_{\Omega}\left|(f(u_{\alpha_j}(t,P_Nu_0))- f(\phi_N^tP_Nu_0))\right|d\mathbb{P}\right) \mu_N^{\alpha_j}(du_0)\\
    &\leq  \int_{B_R(L^2)}\left( \int_{S_r}\left|(f(u_{\alpha_j}(t,P_Nu_0))- f(\phi_N^tP_Nu_0))\right|d\mathbb{P}\right) \mu_N^{\alpha_j}(du_0)\\
    &\quad \quad+ \int_{B_R(L^2)}\left( \int_{S_r^c}\left|(f(u_{\alpha_j}(t,P_Nu_0))- f(\phi_N^tP_Nu_0))\right|d\mathbb{P}\right) \mu_N^{\alpha_j}(du_0)\\
    &\leq C_f\int_{B_R(L^2)}\mathbb{E}\left( \lVert u_{\alpha_j}(t,P_Nu_0)-\phi_N^tP_Nu_0\rVert_{L^2}\mathbf{1}_{S_r}\right)\mu_N^{\alpha_j}(du_0)+2\int_{B_R(L^2)}\mathbb{E}\mathbf{1}_{S_r^c}~~\mu_N^{\alpha_j}(du_0)
\end{align*}
by using the Lipschitz property.\\
Now let us consider for $r>0$,
$$ S_r=\left\{ \omega\in \Omega|\max\left(\sup_{\tau\leq t}\left|\sqrt{\alpha_j}\sum_{n=0}^Na_n\int_0^\tau(u,e_n)d\mathcal{B}_n(\tau') \right|,\sup_{\tau\leq t}\lVert z_{\alpha_j}(\tau)\rVert_{L^2}\right)\leq r\sqrt{\alpha_j t} \right\}.$$
Let us compute $\mathbb{E} \mathbf{1}_{S_r^c}$. We have by using the Doob's inequality \\
$$\mathbb{E}\sup_{\tau\leq t}\left|\sqrt{\alpha_j}\sum_{n=0}^Na_n\int_0^\tau(u,e_n)d\mathcal{B}_n(\tau) \right|^2\leq 4\alpha_j\sum_{n=0}^Na_n^2\int_0^t\mathbb{E} (u,e_n)^2d\tau\leq \alpha_jt A_0\mathbb{E}\lVert u\rVert_{L^2}^2\leq C\alpha_jt .$$\\
We have also $\mathbb{E}\sup_{\tau\leq t}\lVert z_{\alpha_j}(\tau)\rVert^2_{L^2}\leq C\alpha_jt$. \\
According to Chebyshev's inequality, we have:\\
$$\mathbb{E} \mathbf{1}_{S_r^c}=\mathbb{P}\left\{ w| \max\left(\sup_{\tau\leq t}\left|\sqrt{\alpha_j}\sum_{n=0}^Na_n\int_0^t(u,e_n)d\mathcal{B}_n(\tau') \right|,\sup_{\tau\leq t}\lVert z_{\alpha_j}(\tau)\rVert_{L^2}\right)\geq r\sqrt{\alpha_j t}\right\}\leq \frac{C\alpha_j t}{r^2\alpha_jt}=\frac{C}{r^2}.$$
We need to prove now the following statement 
\begin{lemma}
    We have for any $R>0$ and $r>0$ 
    \begin{align}
\sup_{u_0 \in B_R(L^2)} \mathbb{E}\left( \lVert \phi_N^t P_N u_0 - u_{\alpha_j}(t, P_N u_0)\rVert_{L^2}\mathbf{1}_{S_r} \right) \to 0 \text{ as } j \to \infty.
\end{align}
\end{lemma}
\begin{proof}
    Let us set $w_{j}=u-v_{j}=\phi_N^tP_Nu_0-v_{j}(t,P_Nu_0)$ with $w_j=w_{\alpha_j}$. We have 
    \begin{align*}
  \partial_t &w_{j}=i\left[\Delta w_{j}-P_N\left( e^{\beta|u|^2} u- e^{\beta|v_{j} + z_{j}|^2}(v_{j} + z_{j})\right)  \right]+\alpha_jP_N\left((1-\Delta)^{-\frac{1}{2}}\left(|(1-\Delta)^{\frac{1}{2}}(v_j+z_j)|^\delta(1-\Delta)^{\frac{1}{2}}(v_j+z_j)    \right)\right)\\
  &+\alpha_j\left( Ce^{\gamma C_1\lVert P_N(e^{\beta\abs{v_j+z_j}^2}(v_j+z_j))\rVert_{L^2}^2}\left((v_{j}+z_{j})+P_N(e^{\beta\abs{v_{j}+z_{j}}^2}(v_{j}+z_{j})\right)+C_2P_N\left( e^{\gamma |v_{j} + z_{j}|^2} (v_{j} + z_{j}) \right) 
     \right)
     \end{align*}
Since $\mathbb{E}\lVert z_{j}\rVert_{L^2}\to 0 ~~\text{as}~~ j \to \infty,$ we just need to prove that $\sup_{u_0 \in B_R(L^2)} \mathbb{E}\left( \lVert w_{j}\rVert_{L^2}\mathbf{1}_{S_r} \right) \to 0 \text{ as } j \to \infty.$\\
By using the fact that:
\[
u\,e^{\beta |u|^{2}} - u_j\,e^{\beta |u_j|^{2}}
= \sum_{p\ge 0} \frac{\beta^{p}}{p!}\Big( |u|^{2p}u - |v_j+z_j|^{2p}(v_j+z_j) \Big)
= w_j\sum_{p\ge 0} f_{2p}(u,v_j) \;+\; z_j \sum_{p\ge 0}  g_{2p}(v_j,z_j)
\]
where $f_{2p}$ and $g_{2p}$ are polynomials of degree $2p$ in the given variables and the series $F(u,v_j)=\sum_{p\ge 0} f_{2p}(u,v_j)$ and $ G(v_j,z_j)=\sum_{p\ge 0}  g_{2p}(v_j,z_j)$ are convergent and we obtain by taking the inner product with $w_j$,
\begin{align}\label{equation in w_j}
    \frac{1}{2}\partial_\tau \lVert w_j\rVert_{L^2}^2&\leq \lVert w_j\rVert_{L^2}^2\lVert F(u,v_j)\rVert_{L^{\infty}_{t,x}}+\lVert w_j\rVert_{L^2}\lVert z_j\rVert_{L^2}\lVert G(v_j,z_j)\rVert_{L^{\infty}_{t,x}}+\alpha_j \tilde{C}_N\lVert w_j\rVert_{L^2}\lVert u_j\rVert_{L^2}^{1+\delta}\\ \nonumber
    & +\alpha_j\lVert w_j\rVert_{L^2}\lVert u_j\rVert_{L^2}\left[C\exp{\left(\gamma C_1\exp{\left(2\beta\lVert u_j\rVert_{L^\infty_{t,x}}^2\right)}\lVert u_j\rVert_{L^2}^2\right)}\left(1+e^{\beta\lVert u_j\rVert_{L^\infty_{t,x}}^2}\right)+C_2e^{\gamma\lVert u_j\rVert_{L^\infty_{t,x}}^2}\right].
\end{align}
Thanks to the definition of $S_r$, we have $\sup_{\tau\leq t}\lVert z_j(\tau)\rVert_{L^2}\leq r\sqrt{\alpha_j t}$ with $\alpha_j\in (0,1)$ and $\alpha_j\to 0$ which shows that $z_j$ is uniformly bounded in $w,j,\tau\in[0,t]$, but on the other hand the estimate of $v_j$ given by \eqref{estimate on v}  is not uniform in $j$, because of the blow ups as $j\to \infty$ due to the Legendre transformation $\Phi_{\alpha_j C_2}^{\gamma*}(|z_j|)=\alpha_jC_2\Phi^{\gamma*}(\frac{|z_j|}{\alpha_j C_2})$ and the super exponential growth of $\Phi^{\gamma*}$. However, by applying the Ito's formula to $M=\frac{1}{2}\lVert u_j\rVert_{L^2}^2$ with $u_j=z_j+v_j$, we have 
\[
dM = \alpha_j\left( \frac{A^0_N}{2} - \mathcal{M}(u_j)\right)\,d\tau 
   + \sqrt{\alpha_j} \sum_{|n|\leq N} a_n (u_j, e_n) \, d\beta_n.
\]
Since $\mathcal{M}\geq -C_N$ and by definition of $S_r$, the martingale term is controlled uniformly in 
$\omega, \alpha_j$, we then obtain: 
$$
\frac{1}{2} \|u_j(\tau)\|_{L^2}^2  \leq \frac{1}{2}\lVert u_0\rVert_{L^2}^2 
+ \alpha_j \frac{A^0_N}{2}t +\alpha_jC_N t+ r \sqrt{\alpha_j t} 
\leq C^N_{R,t,r} \quad \text{uniformly in}~ j, w,\tau \in[0,t],u_0\in B_R(L^2).
$$
Therefore since $z_j,v_j$ and $u_j$ are bounded uniformly in $j, w,\tau \in[0,t],u_0\in B_R(L^2)$, a simple Young inequality to \eqref{equation in w_j} gives $\partial_\tau \lVert w_j\rVert_{L^2}^2\leq C_6 \lVert w_j\rVert_{L^2}^2+C_7 \alpha_j$ and by Gronwall's lemma with $w_j(0)=0$, we have:
$$
\sup_{u_0 \in B_R} \| w_j(t) \|_{L^2}^21_{S_r} 
\leq C_7 t e^{C_6 t} \alpha_j 
\xrightarrow[j \to \infty]{} 0,
~\text{hence}~ 
\sup_{u_0 \in B_R} \mathbb{E}\, \| w_j(t;u_0) \|_{L^2}1_{S_r} 
\leq \mathbb{E} \sup_{\tilde{u}_0 \in B_R} \| w_j(t;\tilde{u}_0) \|_{L^2}1_{S_r}
\to 0.
$$

\end{proof}
Which completes the proof.
\end{proof}

\section{Infinite-dimensional limit and Global theory}

Our goal now is to exploit the result established in the inviscid limit. We will use them to take the infinite-dimensional limit $N\to \infty$ and establish the global theory claimed in our main theorem. In what follow we assume that $\gamma >4\beta$.
\subsection{Existence of global solution} For any $k\in \mathbb{N} $, let us set the following spaces :
\begin{align}
X_k = L^2([-k, k], H^1) \cap \left\{H^1([-k, k], H^{ - 1}) + H^1([-k, k], L^2)\right\} = X_k^1 + X_k^2;
\end{align}
\begin{align}\label{space Y}
Y_k = L^2([-k, k], H^{1^ -}) \cap C([-k, k], H^{(- 1)^-}+H^{0^-}).
\end{align}
By using the classical Sobolev embedding, we see that \( H^1([-k, k], H^{-1}) \hookrightarrow C^{\frac{1-}{2}}([-k, k], H^{-1}) \), hence\\ \( X^1_k \hookrightarrow C^\frac{1-}{2}([-k, k], H^{-1}) \).  
We also see that \( X^2_k \hookrightarrow C^{\frac{1-}{2}}([-k, k], L^2) \). Thus, we have that \( X_k^1 \) and \( X_k^2 \) are compactly embedded in \( C([-k, k], H^{(- 1)^-}) \) and \( C([-k, k], H^{0^-}) \), respectively.  
We see also that both \( X_k^1 \) and \( X_k^2 \) are also compactly embedded in \( L^2([-k, k], H^{1^-}) \) (Aubin-Lions), therefore, we obtain that \( X_k \) is compactly embedded in \( Y_k \). We will need to split the space $Y_k$ as follows:
\begin{align}
    Y_k^1 &= L^2([-k, k], H^{1^ -}) \cap C([-k, k], H^{(- 1)^-}),\\
    Y_k^2 &= L^2([-k, k], H^{1^ -}) \cap C([-k, k], H^{0^-}).
\end{align}
As we will see it below, $Y_k^1$ will manage the linear part of equation (more precisely, the Laplace term), and $Y_k^2$ the nonlinear one, in the convergence procedure.

Let us denote by \( \lambda_N^k \) the distributions of the processes \( (u_N(t))_{t \in [-k, k]} \), and these are seen as random variables valued in \( C([-k, k], H^{({-1})^-}) \). Since, the measure $\mu_N$ is invariant, then we have:
\begin{equation}\label{lifthing}
\mu_N = \lambda^k_N|_{t=t_0} \quad \text{for any} \quad t_0 \in [-k, k].
\end{equation}
\begin{proposition}
    Let $\theta \in (0,\frac{\gamma}{2})$. We have that  
\begin{align}
\int_{Y_k} \left(\| u \|^2_{X_k} +\lVert e^{\theta|u|^2}\rVert^2_{L^2_tH^1_x}+\lVert e^{\theta|u|^2}u\rVert^2_{L^2_tH^1_x}\right)\lambda^k_N(du) &\leq C'k, \label{estimate9}\\
    \mathbb{E}\,\|\partial_t u_N^k\|_{L^2([-k,k];H^{-1})}^2 &\leq C''k.\label{dt in L2}
\end{align}
where $C'$ and $C''$ do not depend on \( N \). 
\end{proposition}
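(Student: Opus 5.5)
The plan is to reduce every term to a fixed-time integral against $\mu_N$, using the stationarity relation \eqref{lifthing}, and then to invoke the crucial estimate \eqref{crucial estimate}. By Fubini and \eqref{lifthing},
\[
\int_{Y_k}\|u\|_{L^2_tH^1}^2\,\lambda_N^k(du)=\int_{-k}^{k}\int_{L^2}\|u\|_{H^1}^2\,\mu_N(du)\,dt\le 2kC_7'.
\]
The same computation handles $\lVert e^{\theta|u|^2}\rVert^2_{L^2_tH^1_x}$ and $\lVert e^{\theta|u|^2}u\rVert^2_{L^2_tH^1_x}$, once we have a pointwise (in $u\in E_N$) bound
\[
\lVert e^{\theta|u|^2}\rVert_{H^1}^2+\lVert e^{\theta|u|^2}u\rVert_{H^1}^2\lesssim 1+\sum_{p}\frac{\gamma^p}{p!}\lVert u^{p+1}\rVert_{H^1}^2 .
\]

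To prove this bound, expand in power series. For the $L^2$ parts, $|e^{\theta|u|^2}|^2=e^{2\theta|u|^2}$ and $2\theta<\gamma$, so the $L^2$ norms are controlled by $|M|+\sum_p\frac{\gamma^p}{p!}\lVert u\rVert_{L^{2p+2}}^{2p+2}$. For the gradients, write
\[
\nabla e^{\theta|u|^2}=\theta e^{\theta|u|^2}\nabla|u|^2,\qquad |\nabla e^{\theta|u|^2}|^2\le 4\theta^2|u|^2|\nabla u|^2e^{2\theta|u|^2}=4\theta^2\sum_p\frac{(2\theta)^p}{p!}|u|^{2p+2}|\nabla u|^2 ,
\]
and similarly $|\nabla(e^{\theta|u|^2}u)|^2\lesssim(1+|u|^2)^2|\nabla u|^2e^{2\theta|u|^2}$. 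Each term $|u|^{2q}|\nabla u|^2$ equals $(q+1)^{-2}|\nabla u^{q+1}|^2$. After reindexing, the coefficients look like $\frac{(2\theta)^{p}}{p!}\times\mathrm{poly}(p)$, whereas the dissipation supplies weights $\frac{\gamma^{q}}{q!}$. A shift by one or two indices produces polynomial factors in $p$. These are absorbed because $(2\theta/\gamma)^p\,\mathrm{poly}(p)$ is bounded, using the strict inequality $\theta<\gamma/2$. This term-by-term comparison is the main point to check carefully.

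The $X_k$ norm also requires the time-derivative part, so \eqref{dt in L2} should be established first. Use the Galerkin equation \eqref{Galerkin_Trudinger}: $\partial_t u_N=i(\Delta-1)u_N-iP_N((e^{\beta|u_N|^2}-1)u_N)$. The linear part satisfies $\lVert(\Delta-1)u_N\rVert_{H^{-1}}\le\lVert u_N\rVert_{H^1}$. For the nonlinear part, $\lVert P_N((e^{\beta|u|^2}-1)u)\rVert_{L^2}\le\lVert e^{\beta|u|^2}u\rVert_{L^2}+\lVert u\rVert_{L^2}$, which is controlled because $\beta<\gamma/2$ by the previous paragraph (or directly by the $\|P_N(ue^{\beta|u|^2})\|_{L^2}^2$ term in \eqref{crucial estimate}). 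Squaring, integrating in time and using stationarity again gives $\mathbb{E}\|\partial_tu_N\|^2_{L^2H^{-1}}\le C''k$.

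This yields the decomposition $u_N=X^1+X^2$, where $X^1$ is $u_N(0)$ plus the time integral of the linear part, lying in $H^1([-k,k],H^{-1})$, and $X^2$ is the integral of the nonlinear part, lying in $H^1([-k,k],L^2)$. Their norms are bounded in expectation by $k(1+C_7')$, plus $\mathbb{E}\|u_N(0)\|_{H^1}^2$, which is bounded by \eqref{crucial estimate}. All constants come from \eqref{crucial estimate}, hence are independent of $N$. The only delicate point is the coefficient comparison above; the rest is Fubini plus invariance.
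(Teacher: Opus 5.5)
Your proposal follows the paper's proof essentially step by step. Fubini together with the stationarity relation \eqref{lifthing} reduces every term to \eqref{crucial estimate}. The exponential terms are absorbed thanks to $2\theta<\gamma$: you compare power-series coefficients term by term, whereas the paper uses directly the pointwise bound $|u|^2e^{2\theta|u|^2}\lesssim_{\theta,\gamma}e^{\gamma|u|^2}$. The $X_k$ and $\partial_t$ bounds come, as in the paper, from splitting the Galerkin equation into a linear part in $H^1([-k,k],H^{-1})$ and a nonlinear part in $H^1([-k,k],L^2)$.

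One caveat, shared with the paper, concerns the bookkeeping in $k$. The $L^2_t$ component of the $H^1_t$ norms of the integrals $\int_0^t(\cdot)\,d\tau$ is only bounded by $O(k^3)$ this way, not by $k(1+C_7')$; linear growth can be recovered by localizing the time integrals. This is harmless, since afterwards only the $N$-independence of the constants is used.
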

\begin{proof}
From the estimate (\ref{crucial estimate}), We derive that  
\[
   \mathbb{E}  \| u_N \|_{L^2([-k,k],H^1)}^2 =  \int_{-k}^k \mathbb{E}\| u_N(\tau) \|_{H^1}^2 d\tau \leq 2kC_2.
\]

By writing $u_N^k:=u_N|t\in[-k,k]$ in the integral formulation:
\begin{align}\label{formulation u^k_N}
u_N^{k}(t) = u_{0,N} + i \int_0^t (\Delta-1) u_N^{k}(\tau) d\tau -i\int_0^t P_N ((e^{\beta\abs{u_N^{k}(\tau)}^2}-1)u_N^{k}(\tau))~d\tau
\end{align}

By using the estimate \eqref{crucial estimate}, we have the following estimates
\begin{align*}
\mathbb{E}\left\|\int_0^t    (\Delta-1) u_N^k d\tau \right\|_{H^1([-k,k],H^{- 1})}^2 &\leq C_3 \mathbb{E}\left(\int_{-k}^k\left(\partial_t\int_0^t\lVert(1-\Delta) u^k_N\rVert_{H^{-1}}d\tau\right)^2dt
\right)\leq C_4 \mathbb{E}\left(\int_{-k}^k \| u_N^k \|_{H^1}^2 d\tau   \right)
\leq C_5k 
\end{align*}
\begin{align*}
    \mathbb{E}\left\lVert\int_0^t P_N ((e^{\beta\abs{u_N^k}^2}-1)u_N^k)~d\tau \right\rVert_{H^1([-k,k],L^2)}^2&\leq 2\mathbb{E}\left(\left\lVert\int_0^t u_N^k~d\tau \right\rVert_{H^1([-k,k],L^2)}^2+\left\lVert\int_0^t P_N (e^{\beta\abs{u_N^k}^2}u_N^k)~d\tau \right\rVert_{H^1([-k,k],L^2)}^2   \right)\\
    &\leq 2C_3\mathbb{E}\left(\int_{-k}^k\lVert u^k_N\rVert^2_{L^2}+\lVert P_N(e^{\beta\abs{u_N^k}^2}u_N^k) \rVert^2_{L^2}dt\right)
    \leq C_6k 
\end{align*}
In particular, we obtain \eqref{dt in L2}.

Summarizing the calculations above, we obtain 
\begin{align}\label{estimate on X_k}
\int_{Y_k} \| u \|^2_{X_k} \lambda^k_N(du) \leq Ck.
\end{align}
On the other hand, to control the $L^2_tH^1_x$ norm of $e^{\theta|u|^2}$, we use the fact$\int_{L^2}\sum_{p\in \mathbb{N}}\frac{\gamma^p}{p!}\lVert u^{p+1}\rVert_{H^1}^2\mu_N(du)\leq C,$ contained in \eqref{crucial estimate}.

Therefore, we have, by using this estimate and the Young inequality,
$$\int_{Y_k}\lVert e^{\theta|u|^2}\rVert^2_{L^2_tL^2_x}~d\lambda^k_N\lesssim_{\theta,k,|M|} 1+\int_{Y_k}\int_{-k}^k\int_M e^{\gamma|u|^2}|u|^2dx~dt~d\lambda_N^k \leq C'k.$$
Next,
\begin{align*}
\int_{Y_k}\lVert \nabla \left(e^{\theta|u|^2}\right)\rVert^2_{L^2_tL^2_x}~d\lambda^k_N= &\leq C_{1,\theta}\int_{Y_k}\int_{-k}^k\int_M |\nabla u|^2 e^{\gamma |u|^2}dx~dt~d\lambda_N^k\\
&\leq C_{1,\theta}\int_{-k}^k\int_{L^2}\sum_{p\in \mathbb{N}}\frac{\gamma^p}{p!}\lVert u^{p+1}\rVert_{H^1}^2\leq C_2k.
\end{align*}
In similar way, we can also show that $\int_{Y^k}\lVert e^{\theta|u|^2}u\rVert^2_{L^2_tH^1_x}d\lambda^k_N \leq Ck$.\\
This finishes the proof.
\end{proof}
\begin{remark}\label{2'}
   \begin{itemize}
       \item By using the estimate \eqref{estimate on X_k} and the compactness of $X_k$ in $Y_k$,  we have the existence of a subsequence $(\lambda^k_{N_j})_j$ and the measure $\lambda^k$ such that $\lambda^k_{N_j}\rightharpoonup \lambda^k$ in $P(Y_k)$. Thanks to the Skorokhod representation theorem, there exist $Y_k$ valued random variables $\widetilde{u}_{N_j}^k$ and $\widetilde{u}^k$, all defined in the same probability space that we denote again by $(\Omega,\mathcal{F}, \mathbb{P})$, such that  
\begin{enumerate}   
    \item $\widetilde{u}_{N_j}^k$ and $\widetilde{u}^k$ are distributed by $\lambda^k_{N_j}$ and $\lambda^k$, respectively. \\
    \item $\widetilde{u}_{N_j}^k \to \widetilde{u}^k$ ~~$\mathbb{P}$- almost surely in $ Y_k$ as $j\to \infty$
\end{enumerate}
       \item Similar argument, combined with the estimate \eqref{estimate9}, shows that the sequences $\left\{\int_0^te^{\theta|\widetilde{u}^k_N|^2}\,d\tau\right\}_N$ and $\left\{\int_0^te^{\theta|\widetilde{u}^k_N|^2}\widetilde{u}^k_N\,d\tau\right\}_N$ are compact in $C([-k,k],H^{1^-})$ for all $0<\theta< \gamma/2$, and in particular in the space $C([-k,k],L^p_x)$ for all $p>1$ $\mathbb{P}$- almost surely. As such, up to a subsequence that we can identify with $\widetilde{u}^k_N$, we have the bounds  (for $\theta<\frac{\gamma}{2}$): 
       \begin{align}
           \sup_{t\in[-k,k]}\|\int_0^te^{\theta|\widetilde{u}^k_N|^2}\,d\tau\|_{L^p_x} &\leq C_{p,k}^\omega\quad \forall\, p>1, \label{Bound_CtLp_expterm}\\
           \sup_{t\in[-k,k]}\|\int_0^te^{\theta|\widetilde{u}^k_N|^2}\widetilde{u}^k_N\,d\tau\|_{H^{1-\kappa}_x} &\leq C_{\kappa,k}^\omega\quad \forall\, \kappa>0.\label{Bound_CtLp_expterm2}
       \end{align}
   \end{itemize} 
\end{remark}

It following the estimate \eqref{crucial estimate} that   any limiting process $\tilde{u}^k$ belongs $\mathbb{P}$-a.s to $L^2([-k,k];H^1)\cap L^r([-k,k];L^q)\cap L^{2+\delta}_t([-k,k],W_x^{1,2+\delta})$ $  \forall r,q\in[1,\infty).$\\

Next, we want to show that $\widetilde{u}^k$ is a strong solution of (\ref{Intro_Trudinger})~~$\mathbb{P}$-a.s on $t[-k,k]$. To do this, we pass to the limit in the following integral form of the Galerkin projection 
\begin{align}\label{Integ_formulat}
\widetilde{u}_{N_j}^{k}(t) = \widetilde{u}_{0,N_j} + i \int_0^t (\Delta-1) \widetilde{u}_{N_j}^{k}(\tau) d\tau -i\int_0^t P_{N_j} ((e^{\beta\abs{\widetilde{u}_{N_j}^{k}(\tau)}^2}-1)\widetilde{u}_{N_j}^{k}(\tau))~d\tau.
\end{align}
In the light of the compactness established above, we see that the linear terms in \eqref{Integ_formulat} converges in $C_tH^{(-1)^-}$. Let us establish the convergence of the nonlinear term in $C_tH^{(-1)^-}$. It suffices to establish such convergence in  $C_tL^2_x$.
\begin{lemma}\label{3'}
   Let us denote by $F(u)=ue^{\beta|u|^2}$ and $F_{N_j}=P_{N_j}F$. We have that:\\
   $$\int_0^tF_{N_j}(\widetilde{u}_{N_j}^k)\,d\tau=\int_0^tP_{N_j}\left(\widetilde{u}^k_{N_j}e^{\beta|\widetilde{u}^k_{N_j}|^2}  \right)\,d\tau\to \int_0^t\widetilde{u}^ke^{\beta|\widetilde{u}^k|^2}\,d\tau=\int_0^tF(\widetilde{u}^k)\,d\tau\quad \quad~\text{in~~ $C_tL^2_{x}$}\ \ \mathbb{P}\text{-a.s} ~~\text{as}~~j\to \infty.$$-
\end{lemma}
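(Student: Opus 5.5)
We split the difference into a projection part and a nonlinear convergence part:
\[
\int_0^tF_{N_j}(\widetilde{u}_{N_j}^k)\,d\tau-\int_0^tF(\widetilde{u}^k)\,d\tau
=\int_0^t P_{N_j}\bigl(F(\widetilde{u}^k_{N_j})-F(\widetilde{u}^k)\bigr)\,d\tau-\int_0^t(1-P_{N_j})F(\widetilde{u}^k)\,d\tau=:I_j(t)-II_j(t).
\]
Since $P_{N_j}$ is a contraction on $L^2$, it suffices to show $\sup_t\|\int_0^t (F(\widetilde{u}^k_{N_j})-F(\widetilde{u}^k))d\tau\|_{L^2}\le \|F(\widetilde{u}^k_{N_j})-F(\widetilde{u}^k)\|_{L^1_tL^2_x}\to0$ a.s. For $II_j$ we need $F(\widetilde{u}^k)\in L^1_tL^2_x$ (in fact $L^2_tH^1_x$, passing \eqref{estimate9} to the limit by Fatou/lower semicontinuity along the a.s. convergent sequence). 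Then $\|(1-P_{N_j})F(\widetilde{u}^k)\|_{L^2}\to0$ for a.e. $\tau$, dominated by $\|F(\widetilde{u}^k)\|_{L^2}$, gives $\sup_t\|II_j(t)\|_{L^2}\le\int_{-k}^k\|(1-P_{N_j})F(\widetilde{u}^k)\|_{L^2}d\tau\to0$. There is also an extra H\"older-in-time uniformity available if needed.

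For $I_j$ we use a Vitali-type argument on $[-k,k]\times M$. By Remark \ref{2'}, $\widetilde{u}^k_{N_j}\to\widetilde{u}^k$ in $L^2_tH^{1^-}_x$ a.s., hence in $L^2_tL^2_x$. Passing to a further subsequence, we get convergence a.e. in $(t,x)$, and therefore $F(\widetilde{u}^k_{N_j})\to F(\widetilde{u}^k)$ a.e. by continuity of $F$. To upgrade this to $L^2_{t,x}$ convergence we need uniform integrability of $|F(\widetilde{u}^k_{N_j})|^2=|u|^2e^{2\beta|u|^2}$. The natural source is a bound on a strictly higher exponential: since $\gamma>4\beta$, we may pick $\theta\in(2\beta,\gamma/2)$. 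Then $|u|^2e^{2\beta|u|^2}\lesssim e^{2\theta'|u|^2}$ for some $\theta'\in(\beta,\theta)$, and $e^{2\theta'|u|^2}\le \varepsilon\, e^{2\theta|u|^2}+C_\varepsilon$ on sets where $|u|$ is large. Thus uniform integrability follows from a uniform bound on $\|e^{\theta|\widetilde{u}^k_{N_j}|^2}\|_{L^2_{t,x}}^2$. This is exactly where the hypothesis $\gamma>4\beta$ enters.

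The main obstacle is that \eqref{estimate9} gives this bound only in expectation, uniformly in $N$, not pathwise for fixed $\omega$. Vitali needs an $\omega$-wise uniform bound along the sequence. Two routes are available. The first is to work in $L^1(\Omega)$: show $\mathbb{E}\|F(\widetilde{u}^k_{N_j})-F(\widetilde{u}^k)\|_{L^1_tL^2_x}\to0$ via Vitali on $\Omega\times[-k,k]\times M$, using the uniform moment $\mathbb{E}\|e^{\theta|u|^2}\|^2_{L^2_{t,x}}\le C'k$ with $\theta>2\beta$. We then extract a subsequence converging a.s. in $C_tL^2_x$, which is legitimate since we are free to relabel the subsequence. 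The second, closer to the paper's setup, is to include the quantity $\|e^{\theta|u|^2}\|_{L^2_tH^1_x}$ in the tight family before applying Skorokhod, as in \eqref{Bound_CtLp_expterm}. This makes the bound $\sup_j\|e^{\theta|\widetilde{u}^k_{N_j}|^2}\|_{L^2_tH^1_x}\le C^\omega_k$ hold a.s., after which pathwise Vitali applies directly. I would try the first route, since it only uses \eqref{estimate9} and the equality of laws from Skorokhod.
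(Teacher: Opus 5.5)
Your proof is correct, but it takes a genuinely different route from the paper's in both terms.

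\textbf{The paper's route.} For the projected difference, the paper does not use uniform integrability. It uses the pointwise bound $|F(a)-F(b)|\lesssim\bigl(e^{\beta|a|^2}(1+|a|^2)+e^{\beta|b|^2}(1+|b|^2)\bigr)|a-b|$. It then applies Cauchy--Schwarz in time and an $L^4_x\times L^4_x$ H\"older split in space. The weight is controlled by the pathwise bound \eqref{Bound_CtLp_expterm} with exponent $2\beta^+<\gamma/2$. This gives the estimate $C^\omega_k\|\widetilde{u}^k_{N_j}-\widetilde{u}^k\|_{L^2_tL^4_x}$ pathwise, with an explicit rate. For the tail, the paper uses a reverse Poincar\'e (Bernstein) inequality together with the $C_tH^{1^-}_x$ bound \eqref{Bound_CtLp_expterm2}. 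The cost is that both pathwise bounds rely on the extra compactness and Skorokhod step of Remark \ref{2'}, applied to the time-integrated exponentials. Your second route is essentially that mechanism. Placing the weight in $L^4_x$ is exactly what forces $\gamma>4\beta$ in the paper.

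\textbf{Your route.} You apply Vitali on $\Omega\times[-k,k]\times M$ and then pass to a deterministic subsequence. For the tail, you use dominated convergence once $F(\widetilde{u}^k)\in L^2_{t,x}$ a.s.\ is obtained by Fatou. This needs only the averaged bound \eqref{estimate9} and equality of laws, with no pathwise exponential bounds. It is qualitative: you get a.s.\ convergence only along a further subsequence, and convergence in probability along the full sequence. That is harmless here, since the paper relabels subsequences in the same way.

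\textbf{On the hypothesis $\gamma>4\beta$.} Your argument actually needs only some $\theta\in(\beta,\gamma/2)$. Uniform integrability of $|u|^2e^{2\beta|u|^2}$ already follows from a uniform bound on $\int\!\!\int e^{2\theta|u|^2}$ for any $\theta>\beta$. So for this lemma your route works under $\gamma>2\beta$. Your remark that this is ``exactly where $\gamma>4\beta$ enters'' describes the paper's H\"older route, not your own.
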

\begin{proof}
    Let us write
    \begin{align}
        \int_0^tP_{N_j}e^{\beta|\tilde{u}^k_{N_j}|^2}\tilde{u}^k_{N_j}-e^{\beta|\tilde{u}^k|^2}\tilde{u}^k\,d\tau=P_{N_j}\int_0^te^{\beta|\tilde{u}^k_{N_j}|^2}\tilde{u}^k_{N_j}-e^{\beta|\tilde{u}^k|^2}\tilde{u}^k\,d\tau-(1-P_{N_j})\int_0^te^{\beta|\tilde{u}^k|^2}\tilde{u}^k\,d\tau.
    \end{align}
    
    Now, we use Hölder's inequality repetitively in $t$, and in $x$, to obtain 
 \begin{align}
     \lVert P_{N_j}\int_0^te^{\beta|\tilde{u}^k_{N_j}|^2}\tilde{u}^k_{N_j}-&e^{\beta|\tilde{u}^k|^2}\tilde{u}^k\,d\tau\rVert_{C_tL^2_{x}}\leq\\
     &C\sup_t\lVert\int_0^t \left(e^{\beta|\tilde{u}^k_{N_j}|^2}(|\tilde{u}^k_{N_j}|^2+1)+ e^{\beta|\tilde{u}^k|^2} (|\tilde{u}^k|^2+1)\right)^2\,d\tau\,\rVert_{L^2_{x}}^\frac{1}{2} \lVert \tilde{u}^k_{N_j}-\tilde{u}^k\rVert_{L^2_tL^4_x}\\
     &\leq C'\sup_t\lVert\int_0^t  e^{2\beta^+|\tilde{u}^k_{N_j}|^2}+ e^{2\beta^+|\tilde{u}^k|^2} \,d\tau\,\rVert_{L^2_{x}}^\frac{1}{2} \lVert \tilde{u}^k_{N_j}-\tilde{u}^k\rVert_{L^2_tL^4_x}
 \end{align}
By using the estimate \eqref{Bound_CtLp_expterm} and the condition $2\beta <\frac{\gamma}{2}$
, we have the boundedness of the first term in the RHS. We arrive 
\begin{align}
    \lVert P_{N_j}\int_0^te^{\beta|\tilde{u}^k_{N_j}|^2}\tilde{u}^k_{N_j}-e^{\beta|\tilde{u}^k|^2}\tilde{u}^k\,d\tau\rVert_{C_tL^2_{x}}\leq C_{k}^\omega \lVert \tilde{u}^k_{N_j}-\tilde{u}^k\rVert_{L^2_tL^4_x}
\end{align}
and the convergence follows using the convergence of $\tilde{u}^k_{N_j}\to \tilde{u}^k$ in $L^2_tH^{1^-}_x$.\\
It remains to use the reverse Poincaré inequality and the estimate \eqref{Bound_CtLp_expterm2} to find

\begin{align}
    \|(1-P_{N_j})\int_0^te^{\beta|\tilde{u}^k|^2}\tilde{u}^k\,d\tau\|_{C_tL^2_x}\leq {N_j}^{-1^{-}}\|\int_0^te^{\beta|\tilde{u}^k|^2}\tilde{u}^k\,d\tau\|_{C_tH^{1^{-}}_x}\leq C{N_j}^{^{-1^-}}.
\end{align}
This concludes the proof.
\end{proof}
\begin{remark}
 \begin{enumerate}
     \item Below, we will show that uniqueness holds in the class of the constructed solutions;
     \item Since $k$ was arbitrary, we can use a diagonal argument to obtain global solutions (i.e., existing on the full time interval $(-\infty,+\infty)$) ;
     \item In the following, we will show improved regularity properties of the solutions.
 \end{enumerate}
\end{remark}

\subsection{Invariance law of the solution $u(t)$}
\leavevmode\\
For the invariance of the law of \( u(t) \), let us denote by \( \lambda \) the distribution of the process \( u = (u(t))_{t \in \mathbb{R}} \) then there exist a subsequence $\lambda_{N_j}$ such that $\lambda_{N_j}\to \lambda$. 
For this subsequence in the \( N \)-parameter that produced \( \lambda \), we can extract a subsequence, using the Prokhorov theorem, that produces a measure \( \mu \) as a weak limit point of \( (\mu_N) \). 
Passing to the limit along this subsequence in the relation \eqref{lifthing}, we see that \( \mu = \lambda |_{t = t_0} \) for any \( t_0 \in \mathbb{R} \). 
This establishes that $ \mu $ is an invariant law for $ u $.
\subsection{Regularity of the solution}

From the bound \eqref{crucial estimate}, the weak lower semicontinuity of convex functions of the norms, noticing the compactness of $u_N$ and $|u_N|^q$, for all $q\geq 2$, in $\cap_{p>1}L^p\cap_{\kappa>0}H^{1-\kappa}$, we obtain that
\begin{align}
    \int_{L^2}\mathcal{E}_0(u)\mu(du)=C <+\infty,
\end{align}
and by invariance,
\begin{align}
    \int_{L^2}\int_0^t\mathcal{E}_0(u(\tau,v))d\tau\mu(dv)=Ct<+\infty.
\end{align}
where $u(t,v)$ is the value of the process $u(t)$ knowing that $u(0)=v$.
In particular, $\mathbb{P}$-almost surely the limiting process $u$ satisfies
\begin{align}
    h^\omega(t):=\sum_{p\in \mathbb{N}}\frac{\gamma^p}{p!}\lVert u^\omega(t)^{p+1}\rVert_{H^1}^2 &<\infty\label{estimate uniquenesss}\\
    \int_0^th^\omega(\tau)d\tau=\int_0^t\sum_{p\in \mathbb{N}}\frac{\gamma^p}{p!}\lVert (u^\omega)^{p+1}(t)\rVert_{H^1}^2\,d\tau & \leq C_t<\infty, \label{integ:h}
\end{align}
(where, besides finiteness, the dependence of $C_t$ on $t$ is not known explicitly).\\
Also, by $\mathbb{P}-$ almost sure weak convergence (Estimate \eqref{crucial estimate} + Skorokhod) and the energy and mass conservation in finite-dimension, we obtain
\begin{align}
E({u}^\omega(t)) &\leq \liminf_{j\to\infty}E({u}^\omega_{N_j}(t))=\liminf_{j\to\infty}E({u}^\omega_{N_j}(0)) \\
M({u}^\omega(t)) &= \lim_{j\to\infty}M({u}^\omega_{N_j}(t))=\lim_{j\to\infty}M({u}^\omega_{N_j}(0))=M(u^\omega(0)).
\end{align}

We obtain that for a.a $\omega\in\Omega,  u^\omega\in L^{\infty}_tH^1_x\cap L^{2p+2}_x~~\forall  p\in \mathbb{N}$.\\\\
We want to show now that that $\mathbb{P}$-almost surely $u^\omega\in C(\mathbb{R},H^1).$ For that, we need this following lemma.
\begin{lemma}\label{uniqueness lemma}
    For $\gamma>4\beta$, for $\mathbb{P}$-almost all $\omega$, for any $t>0$, there exists $C_{3,t}^w<\infty$ such that:
 \begin{align}
     \int_0^t\lVert (e^{\beta^+|u^\omega|^2}-1)\rVert_{\dot{H}^1}^2d\tau\leq C_{3,t}^\omega.
 \end{align}
\end{lemma}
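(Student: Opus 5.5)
The plan is to reduce the claim to the dissipation bound \eqref{integ:h}, which already gives $\mathbb{P}$-almost surely $\int_0^t h^\omega(\tau)\,d\tau\le C_t<\infty$. The key observation is that the series in $h^\omega$ controls a weighted gradient:
\[
\sum_{p\in\mathbb{N}}\frac{\gamma^p}{p!}\lVert \nabla (u^{p+1})\rVert_{L^2}^2=\sum_{p\in\mathbb{N}}\frac{\gamma^p}{p!}(p+1)^2\int_M|u|^{2p}|\nabla u|^2\,dx\ \ge\ \int_M e^{\gamma|u|^2}|\nabla u|^2\,dx .
\]
This is the pointwise identity already used in the dissipation analysis, now with the factor $(p+1)^2\ge 1$ kept. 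Thus $\int_0^t\int_M e^{\gamma|u^\omega|^2}|\nabla u^\omega|^2\,dx\,d\tau\le C_t$ almost surely.

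Next I compute the gradient of the quantity in the statement. Writing $\beta^+=\beta'$ with $\beta<\beta'<\gamma/4$, which is possible since $\gamma>4\beta$, we have pointwise
\[
|\nabla(e^{\beta'|u|^2}-1)|=2\beta'\,|\mathrm{Re}(\bar u\nabla u)|\,e^{\beta'|u|^2}\le 2\beta'|u|\,|\nabla u|\,e^{\beta'|u|^2}.
\]
Squaring gives $4\beta'^2|u|^2e^{2\beta'|u|^2}|\nabla u|^2$. Since $2\beta'<\gamma/2$, the elementary bound $|u|^2e^{2\beta'|u|^2}\le C_{\beta',\gamma}\,e^{\gamma|u|^2}$ holds because $x e^{(2\beta'-\gamma)x}$ is bounded on $[0,\infty)$. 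This yields
\[
\lVert e^{\beta'|u|^2}-1\rVert_{\dot H^1}^2\le C_{\beta',\gamma}\int_M e^{\gamma|u|^2}|\nabla u|^2\,dx\le C_{\beta',\gamma}\,h^\omega(\tau).
\]
Integrating in $\tau$ and invoking \eqref{integ:h} gives the claim with $C_{3,t}^\omega=C_{\beta',\gamma}C_t$. The margin $\gamma>4\beta$ is stronger than this step needs. I would keep it as stated because later squaring steps, such as controlling $\nabla((e^{\beta|u|^2}-1)u)$ via Kato--Ponce, or the factor $e^{2\beta^+|u|^2}$ already used in Lemma \ref{3'}, consume the extra room.

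The main obstacle is justifying that \eqref{integ:h} really holds for the limiting process $u^\omega$ in the weighted form above. The estimate \eqref{crucial estimate} is proved at the Galerkin level, so it has to pass through the limit $N_j\to\infty$. I would argue by lower semicontinuity. By Remark \ref{2'}, $\tilde u^k_{N_j}\to\tilde u^k$ in $L^2_tH^{1^-}_x$, and hence, along a subsequence, almost everywhere in $(t,x)$. Meanwhile $e^{\theta|\tilde u_{N_j}|^2}$ and $e^{\theta|\tilde u_{N_j}|^2}\tilde u_{N_j}$ are bounded in $L^2_tH^1_x$ by \eqref{estimate9} for $\theta<\gamma/2$, so they converge weakly there, with limits identified by the almost everywhere convergence. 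Taking $\theta=\beta'$ directly, weak lower semicontinuity of the $L^2_tH^1_x$ norm gives $\int_0^t\lVert e^{\beta'|u|^2}\rVert_{\dot H^1}^2\,d\tau\le\liminf_j(\cdot)$. This bound is finite almost surely after Fatou/Chebyshev in $\omega$ combined with the expectation bound \eqref{estimate9}. So I would present the lemma through this route, which avoids needing pointwise control of the full series for $u^\omega$.
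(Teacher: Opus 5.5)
Your proposal is correct. It shares the paper's skeleton: both arguments establish $\lVert e^{\beta^+|u^\omega|^2}-1\rVert_{\dot H^1}^2\lesssim h^\omega(\tau)$ for each $\tau$ and then integrate using \eqref{integ:h}. The way you reach that bound is different.

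The paper expands $e^{\beta^+|u|^2}-1=\sum_{k\ge1}\frac{(\beta^+)^k}{k!}|u|^{2k}$ and applies the triangle inequality term by term. It bounds each $\lVert\nabla|u|^{2k}\rVert_{L^2}$ by $\sqrt{h^\omega}\sqrt{(2k-1)!/\gamma^{2k-1}}$, which uses only that a single term of $h^\omega$ is at most $h^\omega$. It then needs the ratio test on $\sum_k(\beta^+/\gamma)^k\sqrt{(2k-1)!}/k!$, which converges when $\gamma>2\beta$.

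You instead use the whole series at once. From $|\nabla u^{p+1}|=(p+1)|u|^p|\nabla u|$ you get $h^\omega\ge\int_M e^{\gamma|u|^2}|\nabla u|^2\,dx$. You correctly keep the factor $(p+1)^2$, which the paper's displayed identity drops; only the inequality direction is needed. The chain rule and the boundedness of $xe^{(2\beta'-\gamma)x}$ then finish the argument. This is exactly the pointwise bound the paper itself uses when proving \eqref{estimate9}. Your route is shorter, gives an explicit constant, and shows without any convergence test that only $\gamma>2\beta$ is needed. Both arguments need exactly that threshold, so $\gamma>4\beta$ is more than this lemma uses.

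Your last paragraph is a genuinely different justification. You pass $e^{\beta'|\tilde u_{N_j}|^2}$ to the limit using:
\begin{itemize}
\item the expectation bound \eqref{estimate9} combined with Fatou's lemma in $\omega$;
\item an $\omega$-dependent bounded subsequence;
\item identification of the weak limit through a.e.\ convergence (boundedness in $L^2$ on a finite measure space plus a.e.\ convergence suffices);
\item weak lower semicontinuity.
\end{itemize}
This proves the required bound directly for the limiting process, taking $k\ge t$. It does not rely on pointwise finiteness of $h^\omega$, a step the paper only sketches through lower semicontinuity of the full $\mathcal{E}_0$.
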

\begin{proof}

We have \begin{align*}
 \lVert (e^{\beta^+|u^\omega|^2}-1)\rVert_{\dot{H}^1}=\left\lVert  \nabla\left(\sum_{k\in \mathbb{N}^*}\frac{({\beta^+})^k}{k!}|u^\omega|^{2k}\right)\right\rVert_{L^2}\leq \sum_{k\in \mathbb{N}^*}\frac{(\beta^+)^k}{k!}\left\lVert \nabla |u^\omega|^{2k}\right\rVert_{L^{2}}.
 \end{align*}
Let $2k=p+1$ i.e $p=2k-1$, by using the estimate \eqref{estimate uniquenesss}, 
\begin{align*}
\sum_{p \in \mathbb{N}} \frac{\gamma^p}{p!} \lVert \nabla (u^\omega)^{p+1}(t) \rVert_{L^2}^2 = h^\omega(t)<\infty \quad \mathbb{P}~ a.e ~~,\text{we have}~~~\forall p\in \mathbb{N}, \lVert \nabla (u^\omega)^{p+1}(\tau)\rVert_{L^2}^2\leq \frac{h^\omega(\tau)p!}{\gamma^p} \quad \mathbb{P}~ a.s.
\end{align*}
We then obtain:
\begin{align*}
\left\lVert \nabla |u^\omega|^{2k}\right\rVert_{L^{2}}\lesssim \sqrt{h^\omega(\tau)}\left(\frac{(2k-1)!}{\gamma^{2k-1}}\right)^\frac{1}{2}=\frac{\sqrt{(2k-1)!}\gamma^\frac{1}{2}}{\gamma^k}\sqrt{h^\omega(\tau)}
\end{align*}
And, over all, we obtain:
\begin{align}
 \lVert (e^{\beta^+|u^\omega|^2}-1)\rVert_{\dot{H}^1}\lesssim \sqrt{h^\omega(\tau)}\gamma^\frac{1}{2}\sum_{k\in\mathbb{N^*}}\left(\frac{\beta^+}{\gamma}\right)^k\frac{\sqrt{(2k-1)!}}{k!} . 
\end{align}
Using the basic D'Alembert's convergence criterion, It is easy to see that for $\gamma>2\beta$, the series $\sum_{k\in\mathbb{N^*}}\left(\frac{\beta^+}{\gamma}\right)^k\frac{\sqrt{(2k-1)!}}{k!}$ converges. Since $\gamma>4\beta$, the condition is fulfilled. Then $ \lVert (e^{\beta^+|u^\omega(\tau)|^2}-1)\rVert_{\dot{H}^1}\leq C \sqrt{h^\omega(\tau)} \quad \mathbb{P}$- a.a. $\omega$. It remains to use the integrability of the function $h^\omega$ in \eqref{integ:h}.
\end{proof}

Using Duhamel's formulation $u(t)=e^{it(\Delta-1)}u_0-i\int_0^t e^{i(t-\tau)(\Delta-1)}\left(e^{\beta|u(\tau)|^2}-1)u(\tau)\right)d\tau$, we have the following, by applying Kato's Ponce inequality \text{with}~$ \frac{1}{2}=\frac{1}{2+\delta}+\frac{1}{f(\delta)}=\frac{1}{2}+\frac{1}{\infty}$ and the Sobolev embedding \\ $W^{1,2+\delta}\hookrightarrow L^\infty$ in dimension 2 (without loss of generality, assume that $t<t'$):
\begin{align*}
\lVert u(t)-u(t')\rVert_{H^1}&\leq \lVert (S(t)-S(t'))u_0\rVert_{H^1}+\bigg(C(\delta)+\sqrt{f(\delta)}\bigg)\int_t^{t'} \lVert\nabla(e^{\beta|u|^2}-1)\rVert_{L^2}\lVert u(\tau)\rVert_{W^{1,2+\delta}}d\tau\\
&\leq \lVert (S(t)-S(t'))u_0\rVert_{H^1}+\bigg(C(\delta)+\sqrt{f(\delta)}\bigg)\lVert\nabla(e^{\beta|u|^2}-1)\rVert_{L^2([t,t'];L^2_x)}\|u\|_{L^2([t,t'];W^{1,2+\delta}_x)}.
\end{align*}
Thanks to the strong continuity of $S(\cdot)=e^{i\cdot(\Delta-1)}$ on $H^1$, Lemma \ref{uniqueness lemma} and the fact that $u\in L^{2+\delta}_{loc}W^{1,2+\delta}~\mathbb{P}$ a.e, we arrive at $u^\omega\in C(\mathbb{R},H^1)$. Therefore, we obtain $u\in C(\mathbb{R},H^1)\cap L^{2+\delta}_{loc}W^{1,2+\delta}_x$ for $\mathbb{P}$-a.s.
\subsection{Uniqueness of the solution }\label{1}
\leavevmode\\
Let $u_1, u_2$ be two solutions of \eqref{Intro_Trudinger} starting at $u_0\in \text{Supp}(\mu)$. We have :\\
\begin{equation*}
\begin{cases}
    \partial_t u_1 = i\left((\Delta-1) u_1 - (e^{\beta |u_1|^2} - 1)u_1\right), \\
    \partial_t u_2 = i\left((\Delta-1) u_2 - (e^{\beta |u_2|^2} - 1)u_2\right).
\end{cases}
\end{equation*}
Let $u=u_1-u_2$; by taking the equation in $u$ and performing the inner product with u, {while using Green's formula and the homogeneous Dirichlet boundary condition}, we have :\\
\begin{align*}
    \frac{1}{2}\frac{d}{dt}\lVert u\rVert_{L^2}^2&\leq C_1\left\langle\left((e^{\beta^+|u_1|^2}-1)+(e^{\beta^+|u_2|^2}-1)\right)|u|,|u|\right\rangle
\end{align*}

We observe that, in the spirit of applying Gronwall's lemma, we nearly need to control $L^\infty_tL^\infty_x$ (actually exponential integrability in time of the $L^\infty_x$ norm) of the solution. However, we know that the $ L^\infty_x $ norm is not controlled by $ H^1_x $ in dimension 2, which makes us unable to use our $C_tH^1_x$ property, leaving the uniqueness problem very complicated, since Gronwall's lemma fails in this case. On the other hand, even though our $L^{2+\delta}_tW^{1,2+\delta}_x$ bound can control the $L^{2+\delta}_tL^\infty_x$-norm of the solution, for our matter we are in need of a much more time integrability condition.  However, in dimension 2, all the $ L^p $ norms (for $ p < \infty $) are controlled by $ H^1 $. Thus, following Yudovich's argument, we can establish the uniqueness. To achieve this, we apply the abstract version of the Yudovich argument that we have developed in theorem \eqref{yudowich theorem}. In order of ideas, it suffices to show that $$\forall t_0>0,~~\int_0^{t_0}\left(\lVert \nabla(e^{\beta^+|u_1^\omega|^2}-1)\rVert_{L^2}+\lVert \nabla(e^{\beta^+|u_2^\omega|^2}-1)\rVert_{L^2}\right)\left(1+\lVert u_1^\omega\rVert_{H^1}+\lVert u_2^\omega\rVert_{H^1}\right)d\tau<\infty.$$
Since $u_1^\omega,u_2^\omega\in L^2_{loc}(\mathbb{R},H^1)$ and $\int_0^t\lVert \nabla(e^{\beta^+|u_i^\omega|^2}-1)\rVert_{L^2}^2d\tau \leq C_{3,t}^{\omega,i}$  with $i=1,2$  $\mathbb{P}-a.a $ according to lemma \eqref{uniqueness lemma}, we have the claim by applying theorem \eqref{yudowich theorem}.
\subsection{Continuity of the flow with respect to the initial data in $H^1$}\label{1'}
\leavevmode
Let $u_1, u_2$ be two solutions of \eqref{Intro_Trudinger} starting at $u_1^0, u_2^0\in \text{Supp}(\mu)$. We have :
\begin{equation*}
\begin{cases}
    \partial_t u_1 = i\left((\Delta-1) u_1 - (e^{\beta |u_1|^2} - 1)u_1\right), \\
    \partial_t u_2 = i\left((\Delta-1) u_2 - (e^{\beta |u_2|^2} - 1)u_2\right).
\end{cases}
\end{equation*}
Let $u=u_1-u_2$; by taking the equation in $u$ and performing the inner product with $u$ while using {the Green's formula and the homogeneous Dirichlet boundary condition} and the Yudovich's argument, we have :
\small
\begin{align}\label{estimate_cont}
     \sup_{t\in [T_0,T_1]}\lVert u(t) \rVert_{L^2}^{2} &\lesssim \left(\sqrt{1-\lambda}  \int_{T_0}^{T_1} \left( 1+\lVert u_1\rVert_{{\dot{H}^1}}+\lVert u_2\rVert_{\dot{H}^1}\right) \left(\lVert (e^{\beta^+|u_1|^2}-1)\rVert_{\dot{H}^1}+\lVert (e^{\beta^+|u_2|^2}-1)\rVert_{\dot{H}^1} \right) d\tau +\lVert u(T_0)\rVert_{L^2}^{2(1-\lambda)} \right)^{\frac{1}{1-\lambda}}.
\end{align}
Passing $\lVert u(T_0)\rVert_{L^2}\to 0$ implies for any $\lambda\in (0,1)$\\
$$\lim_{\lVert u(T_0)\rVert_{L^2}\to 0}\sup_{t\in [T_0,T_1]}\lVert u(t) \rVert_{L^2}^{2}\lesssim \left(\sqrt{1-\lambda}  \int_{T_0}^{T_1} \left( 1+\lVert u_1\rVert_{{\dot{H}^1}}+\lVert u_2\rVert_{\dot{H}^1}\right) \left(\lVert (e^{\beta^+|u_1|^2}-1)\rVert_{\dot{H}^1}+\lVert (e^{\beta^+|u_2|^2}-1)\rVert_{\dot{H}^1} \right) d\tau \right)^{\frac{1}{1-\lambda}}.$$
and by letting $\lambda\to 1^- $,we obtain: $$\lim_{\lVert u(T_0)\rVert_{L^2}\to 0}\sup_{t\in [T_0,T_1]}\lVert u(t) \rVert_{L^2}=0.$$
Also by taking the equation in $u$ and performing the inner product with $-\Delta u$, {while using Green's formula and the homogeneous Dirichlet boundary condition}, we have:
\begin{align*}
    \frac{1}{2}\frac{d}{dt}\lVert\nabla u\rVert_{L^2}^2&\lesssim\bigg\langle\left(|\nabla(e^{\beta^+|u_1|^2}-1)|+|\nabla (e^{\beta^+|u_2|^2}-1)| \right)|u|,|\nabla u|\bigg\rangle +\bigg\langle\left((e^{\beta^+|u_1|^2}-1)+ (e^{\beta^+|u_2|^2}-1) \right)|\nabla u|,|\nabla u|\bigg\rangle
\end{align*}
Let $0<\epsilon<\delta$, by applying the Holder's inequality with $1=\frac{1}{2}+\frac{1}{2+\epsilon}+\frac{1}{f(\epsilon)}$ where $f(\epsilon)=\frac{4+2\epsilon}{\epsilon}$ and the Sobolev's embedding with precise constant $H^1\hookrightarrow L^{f(\epsilon)}$, we have:  
\begin{align*}
\frac{d}{dt}\lVert\nabla u\rVert_{L^2}^2
\leq C'\sqrt{f(\epsilon)}\bigg(\lVert \nabla(e^{\beta^+|u_1|^2}-1)\rVert_{L^2}+\lVert \nabla(e^{\beta^+|u_2|^2}-1)\rVert_{L^2}  \bigg)\lVert \nabla u\rVert_{L^2}\lVert \nabla u\rVert_{L^{2+\epsilon}}
\end{align*}
Now, by interpolating $L^{2+\epsilon}$ between $L^2$ and $L^{2+\delta}$, we have $\lVert v\rVert_{L^{2+\epsilon}}\leq \lVert v\rVert_{L^2}^{1-\theta}\lVert v\rVert_{L^{2+\delta}}^{\theta}$ with $\theta=\frac{\epsilon (4+2\delta)}{\delta(4+2\epsilon)}$ and then 
\begin{align*}
\frac{d}{dt}\lVert\nabla u\rVert_{L^2}^2
\leq C'\sqrt{f(\epsilon)}\bigg(\lVert \nabla(e^{\beta^+|u_1|^2}-1)\rVert_{L^2}+\lVert \nabla(e^{\beta^+|u_2|^2}-1)\rVert_{L^2}  \bigg)\lVert \nabla u\rVert_{L^2}^{2-\theta}\lVert \nabla u\rVert_{L^{2+\delta}}^{1+\theta}.
\end{align*}
Since $\epsilon$ is sufficiently close to 0, we have $1+\theta\leq 1+\frac{\delta}{2}$ and by integrating, we obtain
\begin{align*}
  \sup_{t\in [T_0,T_1]}\lVert \nabla u(t) \rVert_{L^2} &\leq \left(C_1\theta \sqrt{f(\epsilon) } \int_{T_0}^{T_1} \left( 1+\lVert u\rVert_{W^{1,2+\delta}}^{1+\frac{\delta}{2}}\right) \left(\lVert (e^{\beta^+|u_1|^2}-1)\rVert_{\dot{H}^1}+\lVert (e^{\beta^+|u_2|^2}-1)\rVert_{\dot{H}^1} \right) d\tau +\lVert \nabla u(T_0)\rVert_{L^2}^{\theta} \right)^{\frac{1}{\theta}} 
\end{align*}
Passing $\lVert \nabla u(T_0)\rVert_{L^2}\to 0$ implies for any $0<\epsilon<\delta$ :
$$\sup_{t\in [T_0,T_1]}\lVert \nabla u(t) \rVert_{L^2} \leq \left(C_1\theta \sqrt{f(\epsilon) } \int_{T_0}^{T_1} \left( 1+\lVert u\rVert_{W^{1,2+\delta}}^{1+\frac{\delta}{2}}\right) \left(\lVert (e^{\beta^+|u_1|^2}-1)\rVert_{\dot{H}^1}+\lVert (e^{\beta^+|u_2|^2}-1)\rVert_{\dot{H}^1} \right) d\tau \right)^{\frac{1}{\theta}} $$
   By letting $\epsilon\to 0$, we have $\theta \to 0$ and $\theta \sqrt{f(\epsilon)}\to 0$, and since $u\in L^{2+\delta}_{loc}W^{1,2+\delta}$ and $\int_0^t\lVert(e^{\beta^+|u|^2}-1)\rVert_{\dot{H}^1}^2\leq C_{3,t}^\omega$ $\mathbb{P}$~a.a according lemma \eqref{uniqueness lemma}, we have:
$$\lim_{\lVert \nabla u(T_0)\rVert_{L^2}\to 0}\sup_{t\in [T_0,T_1]}\lVert \nabla u(t) \rVert_{L^2}=0.$$

\subsection{On the size of data}
Consider the random variables $u^N:\Omega\to L^2$ whose laws are $\mu^N$. We can consider the random variables $\mathcal{K}_N,\, \mathcal{V}_N,\, \mathcal{M}_N :\Omega\to \R$ as
\begin{align}
  \mathcal{K}_N &= \left\langle u_N,P_N\left((1-\Delta)^{-\frac{1}{2}}\left(|(1-\Delta)^{\frac{1}{2}}u_N|^\delta(1-\Delta)^{\frac{1}{2}}u_N    \right)\right)\right\rangle=:\langle u_N,g_{\delta,N}\rangle\\
  \mathcal{V}_N &={Ce^{\gamma C_1\lVert P_N(e^{\beta\abs{u_N}^2}u_N)\rVert_{L^2}^2}}\lVert u_N\rVert_{L^2}^2+{Ce^{\gamma C_1\lVert P_N(e^{\beta\abs{u_N}^2}u_N)\rVert_{L^2}^2}}\sum_{p\in\mathbb{N}}\frac{\beta^p}{p!}\lVert u_N\rVert_{L^{2p+2}}^{2p+2}+C_2\sum_{p\in \mathbb{N}}\frac{\gamma^p}{p!}\lVert u_N\rVert_{L^{2p+2}}^{2p+2}\\
  \mathcal{M}_N &=\mathcal{K}_N+\mathcal{V}_N.
\end{align}
We have already established a.s convergence result of $u_N$ to $u$ as $N\to\infty$. From results established below, we have the convergence of $\mathcal{K}_N,\, \mathcal{V}_N,\, \mathcal{M}_N$ as random variables. Denoting there respective (random variables) limits by $\mathcal{K},\, \mathcal{V},\, \mathcal{M}$, we find the following forms:
\begin{align}
  \mathcal{K} &=\langle u,g_{\delta}\rangle \quad\quad g_\delta:=L^2-\lim_{N\to\infty} g_{\delta,N}\\
  \mathcal{V} &={Ce^{\gamma C_1\lVert e^{\beta\abs{u}^2}u\rVert_{L^2}^2}}\lVert u\rVert_{L^2}^2+{Ce^{\gamma C_1\lVert e^{\beta\abs{u}^2}u\rVert_{L^2}^2}}\sum_{p\in\mathbb{N}}\frac{\beta^p}{p!}\lVert u\rVert_{L^{2p+2}}^{2p+2}+C_2\sum_{p\in \mathbb{N}}\frac{\gamma^p}{p!}\lVert u\rVert_{L^{2p+2}}^{2p+2}\\
  \mathcal{M} &=\mathcal{K}+\mathcal{V}.
\end{align}
Note that although $\mathcal{K}$ could not be explicitly expressed in terms of $u$ due to the lack of compactness in $H^1$, the quantity $\mathcal{V}$ does.
\subsubsection{Nontriviality.}
\begin{proposition} Recall the condition $\gamma>4\beta$. 
 Then we have
\begin{align}
            \Bbb E \left(\mathcal{K}+\mathcal{V}\right)&=\frac{A^0}{2},\label{eq:M}
 \end{align}
 where $\mathcal{K}$ and $\mathcal{V}$ are the modified kinetic and potential energies, defined in \eqref{Modif_Kin_En} and \eqref{Modif_Pot_En}.
\end{proposition}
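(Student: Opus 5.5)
The plan is to pass to the limit $N\to\infty$ in the finite-dimensional identity \eqref{estimate6}, $\int \mathcal{M}\,d\mu_N = A^0_N/2$, written on the Skorokhod space as $\mathbb{E}(\mathcal{K}_N+\mathcal{V}_N)=A^0_N/2$. Since $A^0_N\to A^0$, it suffices to prove $\mathbb{E}\mathcal{K}_N\to\mathbb{E}\mathcal{K}$ and $\mathbb{E}\mathcal{V}_N\to\mathbb{E}\mathcal{V}$ along the subsequence $N_j$ produced in Remark \ref{2'}. The route is Vitali's theorem: obtain almost sure convergence of each piece, then uniform integrability from a moment bound of order strictly larger than one that is uniform in $N$. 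The source of this bound is \eqref{estimate7}, $\int E(u)\mathcal{E}_0(u)\,\mu_N(du)\le C_8$. Because $E\gtrsim \|u\|_{H^1}^2+\sum_p \frac{\beta^p}{p!}\|u\|_{L^{2p+2}}^{2p+2}$, while $\mathcal{E}_0$ dominates every term in $\mathcal{M}$ (up to constants), we get a uniform bound on $\mathbb{E}\,\mathcal{M}_N\cdot(1+\|u_N\|_{H^1}^2)$. This is an improvement by a factor that grows on the tail.

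\textbf{The potential term $\mathcal{V}_N$.} By Remark \ref{2'}, $u_N\to u$ almost surely in $H^{1^-}$, hence in every $L^q$. Convergence of each $\|u_N\|_{L^{2p+2}}^{2p+2}$ alone is not enough for the exponential series, so I would use the $H^{1^-}$-compactness of $e^{\theta|u_N|^2}u_N$ for $\theta<\gamma/2$, as in Lemma \ref{3'}. This gives $\int e^{\gamma'|u_N|^2}|u_N|^2\to\int e^{\gamma'|u|^2}|u|^2$ for $\gamma'<\gamma$ (which covers the $\beta$-series, since $\beta<\gamma/4$). It also gives $P_N(e^{\beta|u_N|^2}u_N)\to e^{\beta|u|^2}u$ in $L^2$, and hence convergence of the prefactor $e^{\gamma C_1\|\cdot\|^2}$. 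The $C_2$-series with the full weight $\gamma$ is handled by convergence in probability plus uniform integrability, and the tail of that series is controlled through $\mathcal{E}_0$. Uniform integrability of $\mathcal{V}_N$ then follows from $\mathbb{E}\,\mathcal{V}_N E(u_N)\le C$ combined with $E(u_N)\to\infty$ on the set where $\mathcal{V}_N$ is large.

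\textbf{The kinetic term $\mathcal{K}_N=\langle u_N,g_{\delta,N}\rangle$ (the main obstacle).} Here $g_{\delta,N}$ involves $|(1-\Delta)^{1/2}u_N|^\delta(1-\Delta)^{1/2}u_N$, and we have no strong $H^1$ convergence of $u_N$. I would first show that $g_{\delta,N}$ is bounded in $L^2(\Omega;H^{1-\delta'})$ for some small $\delta'$, or at least that it is bounded in $H^{-1}$ with some additional compactness. The ingredients are the bound on $\|u_N\|_{W^{1,2+\delta}}^{2+\delta}$ contained in \eqref{crucial estimate}, and the fact that $(1-\Delta)^{-1/2}$ maps $L^{(2+\delta)/(1+\delta)}$ into $L^2$ with a gain. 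This should yield weak $L^2$ convergence of $g_{\delta,N}$ (after a further Skorokhod extraction, with the pair $(u_N,g_{\delta,N})$ taken jointly). If strong $L^2$ convergence is needed, as the paper's definition $g_\delta:=L^2\text{-}\lim g_{\delta,N}$ suggests, I would first try to extract compactness of $g_{\delta,N}$ from the time-regularity of $u_N$ together with \eqref{dt in L2}. Once that is in place, $\langle u_N,g_{\delta,N}\rangle\to\langle u,g_\delta\rangle$ almost surely, since $u_N\to u$ strongly in $L^2$. Uniform integrability again comes from \eqref{estimate7}, via $|\mathcal{K}_N|\le \|u_N\|_{L^2}\|u_N\|_{H^1}^{1+\delta}\lesssim E(u_N)^{1+\delta/2}$, whose moment of order $>1$ is controlled by $\mathbb{E}E\mathcal{E}_0$ because $\mathcal{E}_0\gtrsim\|u\|_{H^1}^2$ and $\delta<1$.

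\textbf{Conclusion.} Summing the two limits gives $\mathbb{E}(\mathcal{K}+\mathcal{V})=\lim_j A^0_{N_j}/2=A^0/2$, which is \eqref{eq:M}. The key point is to avoid mere lower semicontinuity, which would only give $\le$. The reverse inequality, i.e. the absence of mass loss, comes precisely from the uniform integrability supplied by \eqref{estimate7}, together with the tail estimate \eqref{estimate5}, which is uniform in $N$.
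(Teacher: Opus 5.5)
Your overall structure matches the paper's. You start from $\mathbb{E}(\mathcal{K}_N+\mathcal{V}_N)=A^0_N/2$, take almost sure limits via Skorokhod and the compactness of $e^{\beta|u_N|^2}u_N$ and $g_{\delta,N}$, and then rule out loss of mass. Your treatment of $\mathcal{K}_N$ is sound. The bound $|\mathcal{K}_N|\lesssim \|u_N\|_{H^1}^{2+\delta}$, together with $\mathbb{E}\|u_N\|_{H^1}^4\lesssim \mathbb{E}\,E(u_N)\mathcal{E}_0(u_N)$, gives an $L^r(\Omega)$ bound with $r=4/(2+\delta)>1$. This does the job of the paper's Young--Markov step. (No time regularity is needed for $g_{\delta,N}$. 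By duality $L^{(2+\delta)/(1+\delta)}\hookrightarrow H^{-\delta/(2+\delta)}$, so $g_{\delta,N}$ is bounded in $H^{2/(2+\delta)}$ and hence precompact in $L^2$.)

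The gap is the uniform integrability of $\mathcal{V}_N$, which is exactly the hard inequality $\mathbb{E}(\mathcal{K}+\mathcal{V})\ge A^0/2$. You derive it from $\mathbb{E}\,\mathcal{V}_NE(u_N)\le C$ plus the claim that $E(u_N)$ is large wherever $\mathcal{V}_N$ is large. That claim is false uniformly in $N$:
the energy controls only $\|u\|_{H^1}^2$ and $\int_M e^{\beta|u|^2}$, whereas $\mathcal{V}_N$ contains two stronger terms:
\begin{itemize}
\item the prefactor $e^{\gamma C_1X_N}$ with $X_N=\|P_N(e^{\beta|u_N|^2}u_N)\|_{L^2}^2$, which carries the weight $e^{2\beta|u|^2}$;
\item the series $C_2\int_M e^{\gamma|u_N|^2}|u_N|^2$ with $\gamma>4\beta$.
\end{itemize}
Take a Moser-type profile equal to $t$ on a disc of radius $e^{-S}$, with $t^2=cS$, $c\in(1/\beta,2/\beta]$, and a logarithmic tail. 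It has bounded $H^1$ norm and bounded $\int e^{\beta|u|^2}$, yet $X$ and $\int e^{\gamma|u|^2}|u|^2$ blow up as $S\to\infty$, and such profiles are approximated in $E_N$.

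Concentration at bounded energy is precisely how mass could be lost in the limit. Neither \eqref{estimate7} nor \eqref{estimate5} excludes it, since they only see large energy and large $L^2$ norm respectively. The paper uses the $E\,\mathcal{E}_0$ bound only to discard the sets where $\|u_N\|_{L^2}$ or $\|P_N(e^{\beta|u_N|^2}u_N)\|_{L^2}$ exceeds $R$, which also caps the prefactor. On the remaining set it splits $|u_N|^{p+1}=P_M|u_N|^{p+1}+(1-P_M)|u_N|^{p+1}$. It then bounds the high-frequency part by $\lambda_M^{-1}$ times the gradient-weighted terms $\sum_p\frac{\lambda^p}{p!}\langle|\nabla u_N|^2,|u_N|^{2p}\rangle$ of $\mathcal{E}_0$.

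Your Vitali argument needs an input of this kind: the pieces of $\mathcal{E}_0$ that grow superlinearly relative to the corresponding pieces of $\mathcal{V}_N$. Examples are $e^{\gamma C_1X_N}X_N$, and $\int_M e^{\gamma|u_N|^2}|\nabla u_N|^2$ combined, e.g., with Moser--Trudinger on sets of bounded $H^1$ norm. The same input is needed for almost sure convergence of the full-weight $C_2$-series, which your $\theta<\gamma/2$ compactness does not reach. As written, ``convergence in probability plus uniform integrability'' for that term is asserted rather than proved.
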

\begin{proof}
Throughout the proof, $u_N$ is a stationary process distributed by $\mu_N$, in view of the Skorokhod representation. We will rely on the following observations
\begin{itemize}
      \item Due to \eqref{crucial estimate}, $\mu_N$ converges weakly to $\mu$ in $H^{1-\kappa}$ and $L^p$ for any $\kappa>0$ and $p\geq 1$ ;
    \item Their Skorokhod representations satisfy : $u_N$ converges a.s to $u$ in $H^{1-\kappa}$ and in $L^p$ for any $\kappa>0$ and $p\geq 1$.
\end{itemize} 
Let us recall 
\begin{align*}
\mathcal{M}(u_N)&={Ce^{\gamma C_1\lVert P_N(e^{\beta\abs{u_N}^2}u_N)\rVert_{L^2}^2}}\lVert u_N\rVert_{L^2}^2+{Ce^{\gamma C_1\lVert P_N(e^{\beta\abs{u_N}^2}u_N)\rVert_{L^2}^2}}\sum_{p\in\mathbb{N}}\frac{\beta^p}{p!}\lVert u_N\rVert_{L^{2p+2}}^{2p+2}+C_2\sum_{p\in \mathbb{N}}\frac{\gamma^p}{p!}\lVert u_N\rVert_{L^{2p+2}}^{2p+2}\\ &\quad \quad+\left\langle u_N,P_N\left((1-\Delta)^{-\frac{1}{2}}\left(|(1-\Delta)^{\frac{1}{2}}u_N|^\delta(1-\Delta)^{\frac{1}{2}}u_N    \right)\right)\right\rangle\\&=\mathcal{V}_N(u_N)+\mathcal{K}_N(u_N).
\end{align*}
Let us first notice that the process $f_{\beta,N}:=e^{\beta|u_N|^2}u_N$ is compact in $L^2$. Since $\delta<1$ then the condition $\gamma>4\beta$ is taken so that we obtain $2\beta(1+\delta/2)<\gamma$. Therefore,
\begin{align}
    \|\nabla\left(e^{\beta|u_N|^2}u_N\right)\|_{L^{1+\frac{\delta}{2}}}^{1+\frac{\delta}{2}}&\leq C_\delta\int_M|\nabla u_N|^{1+\frac{\delta}{2}}|u_N|^{2+\delta}e^{\beta(1+\delta/2)|u_N|^2} +e^{\beta(1+\delta/2)|u_N|^2}|\nabla u_N|^{1+\frac{\delta}{2}}\\
    &\leq C_{\delta,|M|}+ C^1_\delta\int_M |\nabla u_N|^{2+\delta}+e^{2\beta^+(1+\delta/2)|u_N|^2}(|u_N|^{2}+|\nabla u_N|^2)\,dx \leq C_{\delta,|M|}+ C_\delta^2 \mathcal{E}(u_N).
\end{align}
We invoque \eqref{crucial estimate} to obtain,
\begin{align}
    \mathbb{E} \|\nabla\left(e^{\beta|u_N|^2}u_N\right)\|_{L^{1+\frac{\delta}{2}}}^{1+\frac{\delta}{2}}\leq C_{\delta,|M|}',
\end{align}
where $C_{\delta,|M|}'$ does not depend on $N.$ We use the Rellich-Kondrachov theorem to conclude that $e^{\beta|u_N|^2}u_N$ is compact in $L^{\frac{4+2\delta}{2-\delta}-}\subset L^2$. Since $u_N$ converges to $u$ in $L^2$ $\mathbb{P}$-a.s, we have that for almost all $\omega$, $u_N^\omega$ converges to $u^\omega$ a.s in $x$, up to a subsequence denoted again by $u_N^\omega$. Using the continuity of $z\mapsto e^{\beta|z|^2}z$, we can identify the a.s limit, then the $L^2$-limit, of $e^{\beta|u_N|^2}u_N$ as $e^{\beta|u|^2}u$.\\
 Denoting $g_{\delta,N}=P_N(1-\Delta)^{-\frac{1}{2}}|(1-\Delta)^\frac{1}{2} u_N|^\delta(1-\Delta)^\frac{1}{2} u_N$, we see, using \eqref{crucial estimate}, that 
\begin{align}
    \mathbb{E} \|(1-\Delta)^{\frac{1}{2}} g_{\delta,N}\|_{L^\frac{2+\delta}{1+\delta}}^{\frac{2+\delta}{1+\delta}}\leq \mathbb{E} \|u_N\|_{{W}^{1,2+\delta}}^{2+\delta} \leq \mathbb{E}\mathcal{E}(u_N)\leq C,
\end{align}

which shows compactness of $g_{\delta,N}$ in $L^r$, for $r<1+\frac{4}{\delta}$, by virtue of the  Rellich-Kondrachov theorem. Hence this quantity is compact in $L^2$ since $\delta<1$. We then identify a Skorokhod limit in $L^2$ that we denote $g_\delta$. The quantity $\mathcal{K}(u)$ and $\mathcal{V}(u)$, being the a.s. limits of $\mathcal{K}_N(u_N)$ and $\mathcal{V}_N(u_N)$, 
Fatou's Lemma allows to show the easy side of the needed equality : 
\begin{align}
    \mathbb{E}\left(\mathcal{K}+\mathcal{V}\right)\leq \frac{A^0}{2}. 
\end{align}
The reverse inequality is more difficult. 
 Let $M\leq N$, $R>0$, we have for $\lambda=\beta ~\text{or}~ \gamma$. Using the estimate \eqref{Est_E.Cal_E}, we have that
\begin{align*}
&\mathbb{E} \sum_{p \in \mathbb{N}} \frac{\lambda^p}{p!}\|u_N\|_{L^{2p+2}}^{2p+2}
= \mathbb{E} \sum_{p \in \mathbb{N}}\frac{\lambda^p}{p!} {\||u_N|^{p+1}\|_{L^2}^2}
= \mathbb{E} \sum_{p \in \mathbb{N}} \frac{\lambda^p}{p!}\left( \|P_{ M} |u_N|^{p+1} \|_{L^2}^2
+ \|(1-P_{M}) |u_N|^{p+1} \|_{L^2}^2\right) \\
 &\leq
 \mathbb{E} \sum_{p \in \mathbb{N}} \frac{\lambda^p}{p!}\|P_{ M} |u_N|^{p+1}\|_{L^2}^2 \left(1_{\{\lVert u_N\rVert_{L^2}\leq R\}}+1_{\{\lVert u_N\rVert_{L^2}\geq R\}}  \frac{\|u_N\|_{L^2}^2}{\lVert u_N\rVert_{L^2}^2}  \right)
+M^{-1}\mathbb{E}\sum_{p\in \mathbb{N}}\frac{\lambda^p}{p!}\langle |\nabla u_N|^2,|u_N|^{2p}\rangle \\
&\leq \mathbb{E}1_{\{\lVert u_N\rVert_{L^2}\leq R\}} \sum_{p \in \mathbb{N}} \frac{\lambda^p}{p!}\|P_{ M} |u_N|^{p+1}\|_{L^2}^2  
+ R^{-2} \mathbb{E}1_{\{\lVert u_N\rVert_{L^2}\geq R\}}  E(u_N) \sum_{p \in \mathbb{N}} \frac{\lambda^p}{p!}\|P_{ M} |u_N|^{p+1}\|_{L^2}^2+M^{-1}C_6 \\
&\leq \mathbb{E}1_{\{\lVert u_N\rVert_{L^2}\leq R\}} \sum_{p \in \mathbb{N}} \frac{\lambda^p}{p!}\|P_{ M} |u_N|^{p+1}\|_{L^2}^2  
+ R^{-2} C_8+M^{-1}C_6.
\end{align*}
Let $A={\{\lVert u_N\rVert_{L^2}+\lVert P_N(e^{\beta|u_N|^2}u_N)\rVert_{L^2}\leq R\}}$, recall the estimate \eqref{Est_E.Cal_E}. We also have
\begin{align*}
    \mathbb{E} ~C&e^{\gamma C_1\lVert P_N(e^{\beta\abs{u_N}^2}u_N)\rVert_{L^2}^2}\sum_{p \in \mathbb{N}} \frac{\beta^p}{p!}\|u_N\|_{L^{2p+2}}^{2p+2}\leq M^{-1}C_6+ \mathbb{E}~Ce^{\gamma C_1\lVert P_N(e^{\beta\abs{u_N}^2}u_N)\rVert_{L^2}^2}1_{A} \sum_{p \in \mathbb{N}} \frac{\lambda^p}{p!}\|P_{ M} |u_N|^{p+1}\|_{L^2}^2\\
    & \quad\quad\quad\quad\quad\quad+\mathbb{E}~Ce^{\gamma C_1\lVert P_N(e^{\beta\abs{u_N}^2}u_N)\rVert_{L^2}^2}\left(1_{A}^c \frac{\lVert u_N\rVert_{L^2}^2+\lVert P_N(e^{\beta|u_N|^2}u_N)\rVert_{L^2}^2}{\lVert u_N\rVert_{L^2}^2+\lVert P_N(e^{\beta|u_N|^2}u_N)\rVert_{L^2}^2}\right)\sum_{p \in \mathbb{N}} \frac{\lambda^p}{p!}\|P_{ M} |u_N|^{p+1}\|_{L^2}^2\\
    &\quad\quad\quad\quad\quad\quad\quad\quad\quad \quad\quad\quad\leq M^{-1}C_6+4R^{-2}C_8+\mathbb{E}~Ce^{\gamma C_1\lVert P_N(e^{\beta\abs{u_N}^2}u_N)\rVert_{L^2}^2}1_{A} \sum_{p \in \mathbb{N}} \frac{\lambda^p}{p!}\|P_{ M} |u_N|^{p+1}\|_{L^2}^2.
\end{align*}
Summarizing,
\begin{align*}
     \mathbb{E}\mathcal{V}_N(u_N)&\leq  
    \mathbb{E} ~Ce^{\gamma C_1\lVert P_N(e^{\beta\abs{u_N}^2}u_N)\rVert_{L^2}^2}\left(\lVert u_N\rVert_{L^2}^2+\sum_{p \in \mathbb{N}} \frac{\beta^p}{p!}\|P_{ M} |u_N|^{p+1}\|_{L^2}^2\right)1_{\{\lVert u_N\rVert_{L^2}+\lVert P_N(e^{\beta|u_N|^2}u_N)\rVert_{L^2}\leq R\}}\\
    &\quad\quad+C_2\mathbb{E}\left(\sum_{p\in\mathbb{N}}\frac{\gamma^p}{p!}\|P_{ M} |u_N|^{p+1}\|_{L^2}^2\right)1_{\{\lVert u_N\rVert_{L^2}\leq R\}}+C'(R^{-2}+M^{-1}).
\end{align*}
 Now, recall that $g_{\delta,N}=P_N(1-\Delta)^{-\frac{1}{2}}|(1-\Delta)^\frac{1}{2} u_N|^\delta(1-\Delta)^\frac{1}{2} u_N.$ Let $r>1$ such that $r(1+\delta)<2+\delta$, we have:
 \begin{align*}
   |\langle u_N,g_{\delta,N}\rangle|^r&\lesssim \lVert u_N\rVert_{L^2}^r\lVert |(1-\Delta)^{\frac{1}{2}}u_N|^\delta(1-\Delta)^{\frac{1}{2}} u_N\rVert^r_{H^{-1}}  \\
   &\lesssim  \lVert u_N\rVert_{L^2}^r\lVert |(1-\Delta)^{\frac{1}{2}}u_N|^\delta(1-\Delta)^{\frac{1}{2}} u_N\rVert_{L^{1+\frac{1}{1+\delta}}}^r \\
   &\lesssim  \lVert u_N\rVert_{L^2}^r\lVert (1-\Delta)^{\frac{1}{2}}u_N\rVert_{L^{2+\delta}}^{(1+\delta)r}\\
   &\lesssim  \sum_{p\in \mathbb{N}}\frac{\beta^{p}}{p!}\lVert u_N\rVert_{L^{2p+2}}^{2p+2}+\lVert (1-\Delta)^{\frac{1}{2}}u_N\rVert_{L^{2+\delta}}^{2+\delta}
 \end{align*}
 Recall that $\mathbb{E}\sum_{p\in \mathbb{N}}\frac{\beta^{p}}{p!}\lVert u_N\rVert_{L^{2p+2}}^{2p+2}+\lVert (1-\Delta)^{\frac{1}{2}}u_N\rVert_{L^{2+\delta}}^{2+\delta} \leq C$. Denote $B=\{\|u_N\|_{L^2}+\|g_{\delta,N}\|_{L^2}\leq R\}$. Using the Markov inequality, we arrive at
 \begin{align}
     \Bbb E \langle u, g_{\delta,N}\rangle &=   \Bbb E \langle u, g_{\delta,N}\rangle 1_{B}+\text{Complement}\\
     &\leq \Bbb E \langle u, g_{\delta,N}\rangle 1_{B}+R^{-r}C.
 \end{align}
Overall, we obtain

\begin{align*}
     \mathbb{E}\mathcal{V}_N(u_N) &+\Bbb E\langle u_N, g_{\delta,N}\rangle =\\
     \frac{A^0_N}{2}&\leq  
    \mathbb{E} ~Ce^{\gamma C_1\lVert P_N(e^{\beta\abs{u_N}^2}u_N)\rVert_{L^2}^2}\left(\lVert u_N\rVert_{L^2}^2+\sum_{p \in \mathbb{N}} \frac{\beta^p}{p!}\|P_{ M} |u_N|^{p+1}\|_{L^2}^2\right)1_{A}\\
    &\quad\quad+C_2\mathbb{E}\left(\sum_{p\in\mathbb{N}}\frac{\gamma^p}{p!}\|P_{ M} |u_N|^{p+1}\|_{L^2}^2\right)1_{\{\lVert u_N\rVert_{L^2}\leq R\}}+\Bbb E\langle u_N,g_{\delta,N}\rangle 1_B+ C'(R^{-2}+M^{-1})+CR^{-r}.
\end{align*}

 We can pass to the limit $N\to\infty$, then $M\to\infty$ and $R\to\infty$ to obtain the desired reverse inequality, and then the following equality  $$\int_{L^2}\bigg({Ce^{\gamma C_1\|e^{\beta|u|^2}u\|_{L^2}^2}}\lVert u\rVert_{L^2}^2+{Ce^{\gamma C_1\|e^{\beta|u|^2}u\|_{L^2}^2}}\sum_{p\in\mathbb{N}}\frac{\beta^p}{p!}\lVert u\rVert_{L^{2p+2}}^{2p+2}+C_2\sum_{p\in \mathbb{N}}\frac{\gamma^p}{p!}\lVert u\rVert_{L^{2p+2}}^{2p+2}+\langle u, g_\delta\rangle\bigg)\mu(du)=\frac{A^0}{2},$$
 where $g_\delta$ is an appropriate $\Bbb P$-a.s. limit in $L^2$ of $g_{\delta,N}$, as it can easily be seen that $g_{\delta,N}$ is bounded in $L^{\frac{2+\delta}{2}}(\Omega,W^{1,\frac{2+\delta}{1+\delta}})$ (one can then combine the Prokhorov and Skorokhod  theorems to satisfy the claim).
\end{proof}

\subsubsection{Scaled schemes, redifinition of the measure and size of the data}
We start from the identity established above, that is, 
 \begin{align}\mathbb{E}\mathcal{M}=\frac{A^0}{2}.
\end{align}
Recall that the number $A^0$ was defined from the $L^2$-expansion of the Brownian motion involved in the fluctuation-dissipation scheme as
\begin{align}
    A^0=\lim_{N\to\infty}\sum_{j\leq N}|a_j|^2.
\end{align}
Now, considering scaled versions of this scheme in which the coefficients $a_j$ are all multiplied by a positive number $\lambda$, we will ultimately obtain a family of measures $\{\mu^\lambda\}$ and corresponding modified energies $\{\mathcal{M}^\lambda\}$ satisfying similar properties proven above, with respective identities
\begin{align}
    \mathbb{E}\mathcal{M}^\lambda=\frac{A^0_\lambda}{2}=\frac{\lambda^2A^0}{2}.
\end{align}
Redefine $\mu$ as the following convex combination
\begin{align}
    \mu=\sum_{n=1}^\infty\frac{\mu^{\lambda_n}}{2^n},
\end{align}
for a sequence $\lambda_n\to\infty$ as $n\to\infty$. We see easily that $\mu$ define a stationary process satisfying the equation, and by construction we have the following
\begin{remark}\label{Rmk_size}
  For any $R>0$, for $\lambda_n>>1$, $\mathbb{P}(\mathcal{M}^{\lambda_n}>R)>0$.  
\end{remark}
\section{Further properties of the constructed measure $\mu$}
In this section, we establish the following three properties regarding the non-degeneracy of the measure. We also define a statiscal ensemble (of full measure) invariant under the flow. We assume that the coefficients $(a_m)$ entering the definition of the noise are nonzero and decreasing.

\begin{enumerate}
    \item {\bfseries Property 1: Large modified potential energy.} One drawback of the modified energy is that its kinetic part is not explicit in $u$. Therefore, it does not directly transfer "large size property" of Remark \ref{Rmk_size} above to $u$. Exploiting the fact that the potential part is explicit in $u$, we perform an argument that solves this issue.
    \item {\bfseries Property 2: Non atomicity and absolute continuity of the mass functional.} This property rules atoms and, more generally, positive charge of level sets of the mass.
    \item {\bfseries Property 3: Dimensionality of at least two.} This, in particular, rules out any one dimensional family of solutions.
\end{enumerate}

\subsection{Large (explicit) modified potential energy}
\begin{lemma}
    For almost all $\omega\in\Omega$, there is $N^\omega$ such that
    \begin{align}
        \mathcal{V}^\omega\geq -1+\frac{\mathcal{K}^\omega-1}{C(N^\omega)},
    \end{align}
    where $C(\cdot)$ is a non random function such that $\lim_{x\to\infty}C(x)=\infty$.
\end{lemma}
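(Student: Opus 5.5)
The idea is to compare the kinetic part $\mathcal{K}_N=\langle u_N,g_{\delta,N}\rangle$ with the explicit potential part $\mathcal{V}_N$ at a fixed low frequency level, where all norms are equivalent. We then pass to the limit, using the almost sure convergences obtained in the nontriviality proposition: $u_N\to u$ in $H^{1-\kappa}\cap L^p$, $e^{\beta|u_N|^2}u_N\to e^{\beta|u|^2}u$ in $L^2$, and $g_{\delta,N}\to g_\delta$ in $L^2$. Since $\mathcal{K}^\omega$ and $\mathcal{V}^\omega$ are almost sure limits of $\mathcal{K}_N$ and $\mathcal{V}_N$, it suffices to produce an inequality with an $\omega$-dependent but $N$-independent truncation level.

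\textbf{Step 1: frequency truncation.} Fix a level $L$ (which will become $N^\omega$). Write
\[
\mathcal{K}=\langle P_L u,g_\delta\rangle+\langle (1-P_L)u,g_\delta\rangle .
\]
The high-frequency part is bounded by $\lVert(1-P_L)u\rVert_{L^2}\lVert g_\delta\rVert_{L^2}$. For almost every $\omega$ we have $u^\omega\in H^1$ and $g_\delta^\omega\in L^2$, so $\lVert(1-P_L)u\rVert_{L^2}\lesssim L^{-1/2}\lVert u\rVert_{H^1}\to0$. Hence there is a random $N^\omega$ such that this term is at most $1$ for $L\geq N^\omega$.

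\textbf{Step 2: low frequencies controlled by $\mathcal{V}$.} By Bernstein/Sogge-type bounds on $E_L$, $|\langle P_Lu,g_\delta\rangle|\leq \lVert P_Lu\rVert_{L^2}\lVert g_\delta\rVert_{L^2}$. We need to control $\lVert g_\delta\rVert_{L^2}$ by something explicit. The crude bound $\lVert g_{\delta,N}\rVert_{L^2}\lesssim \lVert u_N\rVert_{H^1}^{1+\delta}$ is not explicit in $\mathcal{V}$. I would instead move the operator onto the truncated factor:
\[
\langle P_Lu,g_{\delta,N}\rangle=\big\langle (1-\Delta)^{-1/2}P_L u,\;|(1-\Delta)^{1/2}u_N|^\delta(1-\Delta)^{1/2}u_N\big\rangle .
\]
This is still not explicit. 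It seems the statement is meant with $\mathcal{K}-1$ dominated by $C(N^\omega)$ times something built from $\lVert u\rVert_{L^2}^2$. So the cleanest route is to take $C(L)\sim(1+\lambda_L)^{(2+\delta)/2}$, a non-random function tending to $\infty$, and use $\lVert P_L u\rVert_{H^1}\leq (1+\lambda_L)^{1/2}\lVert u\rVert_{L^2}$. Together with Young's inequality, this bounds the low-frequency piece by $C(L)\,C\lVert u\rVert_{L^2}^2+1$. Since $\mathcal{V}\geq C\lVert u\rVert_{L^2}^2$ (the prefactor $e^{\gamma C_1\cdots}\geq1$), this gives
\[
\mathcal{K}\leq 1+1+C(N^\omega)\mathcal{V},
\]
which rearranges to the stated $\mathcal{V}^\omega\geq -1+(\mathcal{K}^\omega-1)/C(N^\omega)$ after adjusting constants.

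\textbf{Main obstacle.} The hard step is making Step 2 rigorous at the level of $g_\delta$. This object is only a Skorokhod limit and is not expressed in terms of $u$, so the bound must be derived at the finite level $N$, uniformly in $N\geq L$, and then passed to the limit. I would carry out the low/high splitting on $\langle u_N,g_{\delta,N}\rangle$. The high-frequency part would be controlled using the almost sure bound on $\lVert u_N\rVert_{H^{1-\kappa}}$ from the Skorokhod convergence, so that $N^\omega$ can be chosen uniformly in $N$. For the low-frequency part, I would use $L^2$ convergence of $g_{\delta,N}$ to absorb $\lVert g_{\delta,N}\rVert_{L^2}$ into an $\omega$-dependent constant. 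If that constant cannot be made explicit, I would fold it into the choice of $N^\omega$ through the monotone function $C$, choosing $N^\omega$ large enough that $C(N^\omega)$ also dominates $\sup_N\lVert g_{\delta,N}\rVert_{L^2}^2$.
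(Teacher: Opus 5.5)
There is a genuine gap in your Step 2. The bound of the low-frequency piece by $C(L)\lVert u\rVert_{L^2}^2+1$ does not follow from $\lVert P_Lu\rVert_{H^1}\le(1+\lambda_L)^{1/2}\lVert u\rVert_{L^2}$. Norm equivalence on $E_L$ controls only the factor $P_Lu$. The other factor, $|(1-\Delta)^{1/2}u|^\delta(1-\Delta)^{1/2}u$ (equivalently $g_\delta$), involves all frequencies of $u$ and has size of order $\lVert u\rVert_{W^{1,2+\delta}}^{1+\delta}$, which no function of $\lVert u\rVert_{L^2}$ controls. Moving $(1-\Delta)^{-1/2}$ onto $P_Lu$ does not change this.

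Your fallback (enlarge $N^\omega$ until $C(N^\omega)$ dominates $\sup_N\lVert g_{\delta,N}\rVert_{L^2}$) does produce the displayed inequality, but only tautologically. At that point $|\mathcal{K}|\le\lVert u\rVert_{L^2}\lVert g_\delta\rVert_{L^2}$ alone suffices, and the frequency splitting is irrelevant. Indeed, since $\mathcal{V}\ge 0$, any $N^\omega$ with $C(N^\omega)\ge\max(1,\mathcal{K}^\omega)$ makes the right-hand side negative. With such a choice $N^\omega$ grows with the size of $\mathcal{K}^\omega$. That defeats the purpose the lemma serves in the next proposition, namely forcing $\mathcal{V}^\omega$ to be large on $\{\mathcal{K}^\omega\ge R/2\}$.

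The missing idea is that you do not need an $N$-independent truncation level. The statement lets the constant depend on a random Galerkin level, so you may compare the approximating quantities directly on $E_N$. Since $u_N\in E_N$ and $g_{\delta,N}$ is a function of $u_N$, equivalence of norms on $E_N$ gives, with a deterministic constant,
$\mathcal{K}_N\le C_1(N)\lVert u_N\rVert_{L^2}^{2+\delta}\le C(N)\mathcal{V}_N$.
Here the terms $C\lVert u_N\rVert_{L^2}^2$ and $C\beta\lVert u_N\rVert_{L^4}^4\ge C\beta|M|^{-1}\lVert u_N\rVert_{L^2}^4$ in $\mathcal{V}_N$ absorb $\lVert u_N\rVert_{L^2}^{2+\delta}$.

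The paper then uses the almost sure convergences $\mathcal{K}_N\to\mathcal{K}$ and $\mathcal{V}_N\to\mathcal{V}$ to pick $N^\omega$ beyond which $\mathcal{K}_N\ge\mathcal{K}-1$ and $\mathcal{V}_N\le\mathcal{V}+1$. This gives $\mathcal{K}^\omega-1\le\mathcal{K}^\omega_{N^\omega}\le C(N^\omega)\mathcal{V}^\omega_{N^\omega}\le C(N^\omega)(\mathcal{V}^\omega+1)$. In this argument $C$ is the norm-equivalence constant and $N^\omega$ is only a convergence threshold, not a bound on the size of $g_\delta$. Your finite-level variant in the last paragraph becomes exactly this argument if you take $L=N$ rather than insisting on uniformity in $N\ge L$.
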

\begin{proof}
 On the one hand, write, using finite-dimensionality, that for any $N$
    \begin{align}
        \mathcal{K}^\omega_N=\langle u^N,(1-\Delta)^{-\frac{1}{2}}|(1-\Delta)^{\frac{1}{2}}u^N|^\delta(1-\Delta)^{\frac{1}{2}}u^N\rangle\leq C_1(N)\|u^N\|_{L^2}^2\|u^N\|_{L^2}^\delta \leq C(N)\mathcal{V}^\omega_N.
    \end{align}
    On the other hand, by almost sure convergence of $\mathcal{K}^\omega_N$ to $\mathcal{K}^\omega$, there is $N_0^\omega$ such that for all $N\geq N_0^\omega$,
    \begin{align}
        \mathcal{K}^\omega_N\geq \mathcal{K}^\omega-1.
    \end{align}
    Then, for $N\geq N_0^\omega$,
     \begin{align}
        C(N)\mathcal{V}^\omega_N\geq \mathcal{K}^\omega-1.
    \end{align}
    Thanks to the a.s. convergence $\mathcal{K}^\omega_N\to\mathcal{K}^\omega$ as $N\to\infty$, we can increase $N_0^\omega$ if needed, to have $\mathcal{V}^\omega_N\leq \mathcal{V}^\omega+1$ when $N\geq N_0^\omega$. Whence follows the lemma.
\end{proof}
\begin{proposition}
   The support of $\mu$ contains large $\mathcal{V}(u)$ data.
\end{proposition}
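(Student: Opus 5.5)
We argue by contradiction. Fix $R>0$ and suppose $\mu(\mathcal{V}(u)>R)=0$. Then $\mathcal{V}^\omega\le R$ for almost every $\omega$, for each of the scaled measures $\mu^{\lambda_n}$ entering the convex combination $\mu=\sum_n 2^{-n}\mu^{\lambda_n}$. Indeed, $\mu^{\lambda_n}\le 2^n\mu$, so any $\mu$-null set is $\mu^{\lambda_n}$-null. The idea is to combine this bound with the preceding lemma, which controls the kinetic part by the potential part, and with the identity $\mathbb{E}\mathcal{M}^{\lambda}=\lambda^2A^0/2$. Together these should force a contradiction once $\lambda_n$ is large.

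\textbf{Step 1: bounding $\mathcal{K}$ on a set of large probability.} The lemma gives $\mathcal{K}^\omega\le 1+C(N^\omega)(\mathcal{V}^\omega+1)$. The difficulty is that $N^\omega$ is random and may be unbounded. Pick a deterministic $N_1$ with $\mathbb{P}(N^\omega>N_1)\le\eta$, with $\eta$ to be chosen. On the event $G=\{N^\omega\le N_1\}$ we then have $\mathcal{K}\le 1+C(N_1)(R+1)$, using that $C$ can be taken nondecreasing (replace it by $\sup_{y\le x}C(y)$ if needed). This shows that on $G$ the modified energy $\mathcal{M}=\mathcal{K}+\mathcal{V}$ is bounded by a constant $K_1$ that depends on $R$ and $\eta$, but not on $\lambda$.

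\textbf{Step 2: the tail on $G^c$, which is the main obstacle.} We must show that $\mathbb{E}[\mathcal{M}1_{G^c}]$ cannot carry the growing mass $\lambda^2A^0/2$. By Hölder's inequality,
\[
\mathbb{E}[\mathcal{K}1_{G^c}]\le (\mathbb{E}|\mathcal{K}|^r)^{1/r}\eta^{1-1/r},
\]
where $r>1$ is the exponent from the nontriviality proof. That proof gives $\mathbb{E}|\mathcal{K}|^r\lesssim \mathbb{E}\mathcal{E}_0$ at the level of $\mu_N$, and this bound passes to the limit by Fatou's lemma. The catch is that the bound on $\mathbb{E}\mathcal{E}_0$ grows with $\lambda$, and so does the law of $N^\omega$. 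My first attempt would therefore bypass the random $N^\omega$ and work at fixed finite $N$, where $\mathcal{K}_N\le C(N)\mathcal{V}_N$ holds deterministically. Under the scaled scheme one has $\mathbb{E}(\mathcal{K}_N+\mathcal{V}_N)=\lambda^2A^0_N/2$, and $\mathcal{V}_N\to\mathcal{V}$ almost surely. If $\mathcal{V}\le R$ almost surely, then, using the uniform integrability of $\mathcal{V}_N$ coming from the truncation estimates of the nontriviality proof (the $R^{-2},M^{-1}$ tails), $\mathbb{E}\mathcal{V}_N\to\mathbb{E}\mathcal{V}\le R$. One must then bound $\mathbb{E}\mathcal{K}_N$ through $\mathcal{V}_N$ along a suitable subsequence of $N$.

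\textbf{Step 3: getting $\lambda$ large relative to the constants.} Alternatively, and more robustly, we exploit the scaling in $\lambda$ directly. Choose $n$ so large that $\lambda_n^2A^0/2$ exceeds $K_1$ plus the tail bound of Step 2. This requires those bounds to grow strictly slower than $\lambda^2$. That is plausible because the dissipation is superlinear: $\mathcal{E}_0$ contains $\|u\|_{W^{1,2+\delta}}^{2+\delta}$ and exponential terms, so a Jensen-type argument applied to $\mathbb{E}\mathcal{E}_0\lesssim\lambda^2$ yields moments of $\mathcal{K}$ of order $\lambda^{2(1+\delta)/(2+\delta)}$, which is $o(\lambda^2)$. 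This gives $\mathbb{E}\mathcal{M}^{\lambda_n}<\lambda_n^2A^0/2$, contradicting the identity established above. Hence $\mu(\mathcal{V}>R)>0$ for every $R$.

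Establishing this sub-$\lambda^2$ control of $\mathcal{K}$, uniformly over the random threshold $N^\omega$, is the step I expect to be hardest. If it fails, I would fall back on refining the lemma so that $C(N^\omega)$ has a tail that is uniform in $\lambda$.
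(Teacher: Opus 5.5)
Your argument never closes. The decisive estimate is only called plausible, and the steps you do carry out do not hold as stated.

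\textbf{Step~1.} You assert that $K_1=R+1+C(N_1)(R+1)$ is independent of $\lambda$. But $N^\omega$ comes from the almost sure convergences $\mathcal{K}_N\to\mathcal{K}$ and $\mathcal{V}_N\to\mathcal{V}$ in the Skorokhod representation of each scaled scheme separately. So the deterministic $N_1$ with $\mathbb{P}(N^\omega>N_1)\le\eta$ depends on $\lambda_n$, and $C(N_1)$ may blow up with $n$, as you concede in Step~2. The split along $G=\{N^\omega\le N_1\}$ therefore gives no $\lambda$-uniform information and cannot be played against $\lambda_n^2A^0/2$.

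\textbf{Step~3.} This step rests on $\mathbb{E}\mathcal{E}_0\lesssim\lambda^2$, which does not follow from the paper's energy inequality once the noise is scaled and the dissipation constants (hence $\mathcal{V}$) are kept fixed. The It\^o correction becomes $\frac{\lambda^2}{2}C_M(2\beta+1)A^{\frac12}\,\mathbb{E}\int_M(1+|u|^2)e^{\beta|u|^2}dx$. The paper absorbs this term only because $C$ is large compared with $A^{\frac12}$, and that fails for large $\lambda$.

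\textbf{What is missing.} Under your contradiction hypothesis you need neither the lemma nor $N^\omega$.
\begin{itemize}
\item Since $\mathcal{V}\ge C\|u\|_{L^2}^2$, the bound $\mathcal{V}\le R$ a.s.\ gives $\|u\|_{L^2}^2\le R/C$, hence $|\mathcal{K}|\le\sqrt{R/C}\,\|g_\delta\|_{L^2}$.
\item In dimension two, $L^{\frac{2+\delta}{1+\delta}}\hookrightarrow H^{-1}$, so $\|g_{\delta,N}\|_{L^2}^{\frac{2+\delta}{1+\delta}}\lesssim\|(1-\Delta)^{\frac12}u_N\|_{L^{2+\delta}}^{2+\delta}\le 2\mathcal{E}_0(u_N)$. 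Fatou then gives $\mathbb{E}|\mathcal{K}|\lesssim_R(\sup_N\mathbb{E}\mathcal{E}_0)^{\frac{1+\delta}{2+\delta}}$.
\item Absorb the It\^o term into the $C_2$-part of $\mathcal{E}_0$, which dominates $\frac{C_2}{2}\int_Me^{\gamma|u|^2}|u|^2dx$, instead of into the $C$-term. This costs a constant of order $\lambda^{\frac{2\gamma}{\gamma-\beta}}\log\lambda$ in the bound on $\sup_N\mathbb{E}\mathcal{E}_0$.
\item You then get $\mathbb{E}\mathcal{M}^{\lambda_n}\le R+C_R\lambda_n^{\kappa}\log\lambda_n$ with $\kappa=\frac{2\gamma}{\gamma-\beta}\cdot\frac{1+\delta}{2+\delta}$. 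Because $\gamma>4\beta$ and $\delta<1$, $\kappa<\frac{16}{9}<2$, which contradicts $\mathbb{E}\mathcal{M}^{\lambda_n}=\lambda_n^2A^0/2$ for large $n$.
\end{itemize}
This would be a quantitative route genuinely different from the paper's.

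\textbf{The paper's route.} The paper argues directly. It splits $\{\mathcal{M}\ge R\}$, which has positive probability by the convex combination, into two events. On $\{\mathcal{V}>\mathcal{K}\}$ one has $\mathcal{V}\ge R/2$. On $\{\mathcal{K}\ge\mathcal{V}\}$ the lemma gives $\mathcal{V}\ge-1+\frac{R-2}{2C(N^\omega)}$. It then lets $R\to\infty$ without controlling $C(N^\omega)$ on these events. That is exactly the random-threshold issue you flagged, so your concern is legitimate. But identifying it is not the same as resolving it, and your proposal does not resolve it.
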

\begin{proof}
    Split $\Omega$ into : $\Omega_1=\{\mathcal{V}>\mathcal{K}\}$ and $\Omega_2=\{\mathcal{K}\geq\mathcal{V}\}$. We already have that $\PP(\mathcal{M}\geq R)>0$ for all $R>0$ (Remark \ref{Rmk_size}).
    \begin{enumerate}
        \item Let $\omega\in\Omega_1\cap\{\mathcal{M}\geq R\}$, we have that $2\mathcal{V}^\omega\geq R$.
        \item Let $\omega\in\Omega_2\cap\{\mathcal{M}\geq R\}$, we have that $2\mathcal{K}^\omega\geq R$, and then
        \begin{align}
             \mathcal{V}^\omega\geq -1+\frac{R-2}{2C(N^\omega)}\to\infty,\quad R\to\infty.
        \end{align}
    \end{enumerate}
\end{proof}
\subsection{Absolute continuity property}
\begin{proposition}
    \begin{enumerate}
        \item The measure $\mu$ has no atom at $0$ :
        \begin{align}
            \mu(\|u\|_{L^2}\leq \delta)\leq C\delta \quad\quad \text{as $\delta\to 0$};
        \end{align}
        \item The distribution of the mass has a density with respect to the Lebesgue measure on $\R$.
    \end{enumerate}
\end{proposition}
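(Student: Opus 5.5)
We follow the standard route for IID-limit measures, as in \cite{Sy2019}: apply It\^o's formula at the level of the stationary finite-dimensional problem \eqref{eq3} to a function of the mass only, use stationarity of $\mu_N^\alpha$ to kill the time derivative, and obtain an identity that can be passed through $\alpha\to0$ and $N\to\infty$. Concretely, for a bounded $C^2$ function $\Psi$ with $\Psi(0)=\Psi'(0)=0$, set $F(u)=\Psi(M(u))$ with $M(u)=\frac12\|u\|_{L^2}^2$. The Hamiltonian part does not contribute, since $M'(u;i(\cdots))=0$. Stationarity then gives, after dividing by $\alpha$,
\begin{align*}
\int \Psi'(M(u))\,\mathcal{M}(u)\,\mu_N^\alpha(du)=\frac12\int\Big(\Psi'(M(u))A_N^0+\Psi''(M(u))\sum_{n\le N}a_n^2(u,e_n)^2\Big)\mu_N^\alpha(du).
\end{align*}
We take $\Psi''=\mathbf 1_\Gamma$ for a Borel set $\Gamma\subset[0,\infty)$, by approximation through smooth functions. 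This yields the key bound
\begin{align*}
\int \mathbf 1_{\Gamma}(M(u))\sum_{n}a_n^2(u,e_n)^2\,\mu_N^\alpha(du)\le 2\int |\Psi'(M(u))|\,\mathcal{M}(u)\,\mu_N^\alpha(du)\le 2|\Gamma|\,\frac{A^0}{2},
\end{align*}
because $|\Psi'|\le|\Gamma|$ (Lebesgue measure) and \eqref{estimate3} holds. One point needs care: $\mathcal{M}$ is not positive, because of \eqref{3}. However, $\mathcal{K}_N\ge0$ follows from the definition, since $\langle u,(1-\Delta)^{-1/2}(|w|^\delta w)\rangle=\langle w,|w|^\delta w\rangle\ge0$ with $w=(1-\Delta)^{1/2}u$. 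Hence $\mathcal{M}\ge0$, and the bound above is legitimate.

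The bound passes to $\mu_N$ as $\alpha\to0$ and then to $\mu$ as $N\to\infty$. For $\Gamma$ open we use lower semicontinuity of $u\mapsto \mathbf 1_\Gamma(M(u))\sum a_n^2(u,e_n)^2$ in $L^2$ (weak convergence plus Fatou), and then extend to all Borel sets by outer regularity. We then do the same for each scaled measure $\mu^{\lambda_n}$ and sum with weights $2^{-n}$ in the convex combination defining $\mu$. Since $A^0_\lambda=\lambda^2A^0$, the partial sums still converge provided the sequence $\lambda_n$ grows slowly enough compared with $2^n$, for instance $\lambda_n=n$. The result is
\begin{align*}
\int \mathbf 1_\Gamma(M(u))\,\sum_n a_n^2(u,e_n)^2\,\mu(du)\le C|\Gamma|.
\end{align*}

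The main obstacle is removing the weight $\sum a_n^2(u,e_n)^2$, which can vanish or degenerate where $u$ is small in the low modes. For item (1), with $\Gamma=[0,\delta^2/2]$, we would try a second It\^o identity with the test function $\Psi(M(u))$ chosen so that $\Psi''(s)\sim s^{-1}$ near $0$, a log-type function. The goal is to produce a lower-order term with weight $A^0$ alone rather than $\sum a_n^2(u,e_n)^2$, which yields $\mu(\|u\|_{L^2}\le\delta)\lesssim\delta$. If this fails, we fall back on the decreasing nonzero coefficients, which give $\sum a_n^2(u,e_n)^2\ge a_{K}^2\|P_K u\|^2$. The leftover high modes are controlled through $\|(1-P_K)u\|_{L^2}^2\le \lambda_K^{-1}\|u\|_{H^1}^2$ and the $H^1$ moment in \eqref{crucial estimate}, using Chebyshev.

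For item (2), on $\{M(u)\ge\epsilon\}$ the same splitting shows that the weight is bounded below by $c(\epsilon,K)>0$ outside a set of small measure as $K\to\infty$. This gives $\mu(M(u)\in\Gamma,\,M\ge\epsilon)\le C_\epsilon|\Gamma|+o_K(1)$. Optimizing in $K$ gives absolute continuity of the law of $M$ away from $0$, and item (1) handles the neighborhood of $0$. Combining the two shows that the distribution of the mass has a density with respect to Lebesgue measure.
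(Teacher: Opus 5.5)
\textbf{Item (1) is not proved, and item (2) needs it near $0$.} Your item (2) follows the paper: the identity with $g(x)=x$, the bound $\sum_m a_m^2(u,e_m)^2\ge a_K^2(\|u\|_{L^2}^2-\lambda_K^{-1}\|u\|_{H^1}^2)$, and Chebyshev in $H^1$ away from $0$.

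\emph{Why your fallback fails.} It uses only $\int\mathbf 1_\Gamma(M(u))\,Q(u)\,\mu(du)\le C|\Gamma|$, where $Q=\sum_n a_n^2(u,e_n)^2\le \max_n a_n^2\,\|u\|_{L^2}^2$. The Dirac mass at $u=0$ satisfies this trivially. On shells $\{M\in[s,2s]\}$ it yields at best a density of $M$ of order $1/M$, which is not integrable at $0$. No low/high-mode splitting can help, because the weight itself vanishes at $u=0$. More generally, any identity with $\Psi'(0)=0$ is satisfied by $\delta_0$. What excludes $\delta_0$ is the drift term $\tfrac{A_N^0}{2}\Psi'(M)$, which you discarded, used with $\Psi'$ large near $0$.

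\emph{Why the exponent matters.} With $\Psi''\sim s^{-1}$, as you suggest, one gets at best $\mathbb{E}\,|\log M(u)|<\infty$. That gives only $\mu(\|u\|_{L^2}\le\delta)\lesssim 1/|\log\delta|$, not the linear rate.

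\emph{What the paper does.} It takes $g(x)=\sqrt{x}$, i.e.\ applies It\^o to $\|u\|_{L^2}$ itself, via Shirikyan's local-time identity to handle the singularity at $0$. This gives
$\mathbb{E}\big[\|u\|_{L^2}^{-3}\big(A_N^0\|u\|_{L^2}^2-\sum_m a_m^2(u,e_m)^2\big)\big]\lesssim \mathbb{E}\big[\mathcal{M}_N(u)/\|u\|_{L^2}\big]$.
The bracket is at least $(A_N^0-\max_m a_m^2)\|u\|_{L^2}^2$, and $A_N^0-\max_m a_m^2>0$ once two coefficients are nonzero. Hence $\mathbb{E}\,\|u\|_{L^2}^{-1}\le C$, and so $\mu(\|u\|_{L^2}<\delta)\le C\delta$.

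Your plan also lacks the second ingredient: a bound on $\mathbb{E}[\mathcal{M}_N(u)/\|u\|_{L^2}]$ uniform in $(\alpha,N)$, which is Lemma~\ref{lem:MN_estimate}. Once $\Psi'$ is unbounded near $0$, the identity $\int\mathcal{M}\,d\mu_N^\alpha=A_N^0/2$ is no longer enough. The exponential terms of $\mathcal{M}$, divided by $\|u\|_{L^2}$, must be controlled by $\mathcal{E}_0$.

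\textbf{A smaller but real error: your justification of $\mathcal{M}\ge 0$ is wrong.} With $w=(1-\Delta)^{1/2}u$, one has $\langle u,(1-\Delta)^{-1/2}(|w|^\delta w)\rangle=\langle (1-\Delta)^{-1}w,|w|^\delta w\rangle$, not $\langle w,|w|^\delta w\rangle$. The latter is the corresponding term in the \emph{energy} dissipation. The former has no evident sign, and the paper treats $\mathcal{K}_N$ as sign-indefinite (cf.\ \eqref{3}). This is repairable: bound $\int|\mathcal{M}|\,d\mu_N^\alpha$ uniformly using $|\mathcal{K}_N|\le\|u\|_{L^2}\|u\|_{H^1}^{1+\delta}$ together with the uniform $\mathcal{E}_0$ bounds. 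As written, however, the step is unjustified.
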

\begin{proof}
We follow a local time argument introduced by Shirikyan \cite{Shirikyan2011}. We derive the following identity (see \cite{Shirikyan2011} for details) for $u:=u_N^\alpha$, stationary solution to \eqref{eq3},
\begin{align}
    &\mathbb{E}\int_{\Gamma}1_{(a,\infty)} (g(\|u\|_{L^2}^2))\left\{g'(\|u\|_{L^2}^2)\left(\frac{A_0^N}{2}-\mathcal{M}_N(u)\right)+g''(\|u\|_{L^2}^2)\sum_{|m|\leq N}|a_m|^2|(u,e_m)|^2\right\}da \label{Identity_Shir1}\\
    &+\sum_{|m|\leq N}a_m^2\mathbb{E}\left(1_\Gamma(g(\|u\|_{L^2}^2))|g'(\|u\|_{L^2}^2)|^2|(u,e_m)|^2\right)=0.\nonumber
\end{align}
We then obtain
\begin{align}
    \mathbb{E}\int_{\Gamma}1_{(a,\infty)} (g(\|u\|_{L^2}^2))\left\{g'(\|u\|_{L^2}^2)\left(\frac{A_0^N}{2}-\mathcal{M}_N(u)\right)+g''(\|u\|_{L^2}^2)\sum_{|m|\leq N}|a_m|^2|(u,e_m)|^2\right\}da
   \leq 0.\label{Id_Shir2}
\end{align}
For $a>b>0$, $\Gamma=[a,b]$, choose $g$ in $C^2(\R)$ such that $g(x)=\sqrt{x}$ for $x\geq a$ and $g(x)=0$ for $x\leq 0$. We obtain
\begin{align}
    \mathbb{E}\int_a^b1_{(a,\infty)} (\|u\|_{L^2})\left\{ \frac{A_0^N-2\mathcal{M}_N(u)}{4\|u\|_{L^2}}-\frac{1}{4\|u\|_{L^2}^3}\sum_{|m|\leq N}|a_m|^2|(u,e_m)|^2\right\}da\leq 0.
\end{align}
We find tha
\begin{align}
 \mathbb{E}\int_a^b \frac{1_{(a,\infty)} (\|u\|_{L^2})}{\|u\|_{L^2}^3}\left(A_N\|u\|_{L^2}^2-\sum_{|m|\leq N}a_m^2|(u,e_m)|^2\right)da \leq 2(b-a)\mathbb{E}\left(\frac{\mathcal{M}_N(u)}{\|u\|_{L^2}}\right).
\end{align}
Since all $a_m$ are nonzero and decreasing, set $\theta^N=A_0^N-|a_0|^2$. Note that $|a_1|^2<\theta^N<A_0$ for all $N\geq 2$. We obtain the lower bound
\begin{align}
   A_N\|u\|_{L^2}^2-\sum_{|m|\leq N}a_m^2|(u,e_m)|^2\geq \theta^N\|u\|_{L^2}^2\geq |a_1|^2\|u\|_{L^2}^2. 
\end{align}
We arrive at
\begin{align}
    \mathbb{E}\int_a^b \frac{1_{(a,\infty)} (\|u\|_{L^2})}{\|u\|_{L^2}}da \leq \frac{2}{|a_1|^2}(b-a)\mathbb{E}\left(\frac{\mathcal{M}_N(u)}{\|u\|_{L^2}}\right).
\end{align}
Let $\delta>0$, we find
\begin{align}
    \delta^{-1}\mathbb{E}\int_a^b 1_{(a,\delta)} (\|u\|_{L^2})da\leq \mathbb{E}\int_a^b \frac{1_{(a,\delta)} (\|u\|_{L^2})}{\|u\|_{L^2}}da \leq \frac{2}{|a_1|^2}(b-a)\mathbb{E}\left(\frac{\mathcal{M}_N(u)}{\|u\|_{L^2}}\right).
\end{align}
Hence
\begin{align}
    \frac{1}{b-a}\int_a^b \mathbb{P}(a<\|u\|_{L^2}<\delta)da \leq \frac{2\delta}{|a_1|^2}\mathbb{E}\left(\frac{\mathcal{M}_N(u)}{\|u\|_{L^2}}\right).
\end{align}
Pass to the limit $a\to 0$, then $b\to 0$, we obtain
\begin{align}
    \mathbb{P}(0<\|u\|_{L^2}<\delta) \leq \frac{2\delta}{|a_1|^2}\mathbb{E}\left(\frac{\mathcal{M}_N(u)}{\|u\|_{L^2}}\right).
\end{align}
Now, due to the nondegeneracy of the noise, a routine argument show that the noise is non degenerate $\PP(u=0)=0$. We arrive at 
\begin{align}
    \mathbb{P}(0\leq\|u\|_{L^2}<\delta) \leq \frac{2\delta}{|a_1|^2}\mathbb{E}\left(\frac{\mathcal{M}_N(u)}{\|u\|_{L^2}}\right).
\end{align}
Invoking Lemma \ref{lem:MN_estimate} (see below), we arrive at the claimed non atomicity at $0$ : 
\begin{align}
    \mathbb{P}(0\leq\|u\|_{L^2}<\delta) \leq C\delta.
\end{align}
We now turn to the proof of absolute continuity of $\|u\|_{L^2}$. Let's take $g(x)=x$ in the identity \eqref{Identity_Shir1} :
\begin{align}
    \mathbb{E}\left(1_\Gamma(\|u\|_{L^2}^2)\sum_{|m|\leq N}a_m^2|(u,e_m)|^2\right)\leq A_0^N\ell(\Gamma)\leq A_0\ell(\Gamma).
\end{align}
Now, let $M<N$
\begin{align}
    \sum_{|m|\leq N}a_m^2|(u,e_m)|^2\geq\sum_{|m|\leq M}a_m^2|(u,e_m)|^2 &\geq a_M^2\sum_{|m|\leq M}|(u,e_m)|^2\\
    &=a_M^2\left(\sum_{|m|\leq N}|(u,e_m)|^2-\sum_{M<|m|\leq N}|(u,e_m)|^2\right)\\
    &\geq a_M^2\left(\|u\|_{L^2}^2-\lambda_M^{-1}\|u\|_{H^1}^2\right).
\end{align}
Consider the set $S_\epsilon=\{\|u\|_{L^2}\geq\epsilon,\ \ \|u\|_{H^1}\leq \epsilon^{-1}\}$. Therefore, if $\mu$ is conditioned on the set $S_\epsilon$, we obtain
\begin{align}
    a_M^2(\epsilon-\lambda_M^{-1}\epsilon^{-1})\mu(\|u\|_{L^2}^2\in\Gamma)\leq A_0\ell(\Gamma).
\end{align}
Now, for all $\epsilon>0$, we can find $N>>1$ such that there is $M<N$ satisfying $\lambda_M^{-1}\leq \frac{\epsilon^2}{2}$; so that $\epsilon-\lambda_M^{-1}\epsilon^{-1}\geq \frac{\epsilon}{2}$. We then obtain that : for all $\epsilon>0$, there are $N_0$ and $M$ such that for all $N>N_0$, 
\begin{align}
   \mu_N^\alpha(\{\|u\|_{L^2}^2\in\Gamma\}\cap S_\epsilon)\leq \frac{2A_0}{\epsilon a_M^2}\ell(\Gamma).
\end{align}
Passing to the limits $\alpha\to 0,\ \ N\to\infty$:
\begin{align}
   \mu_N^\alpha(\{\|u\|_{L^2}^2\in\Gamma\}\cap S_\epsilon)\leq \frac{2A_0}{\epsilon a_M^2}\ell(\Gamma).
\end{align}
On the other hand, using the non-atomicity bound above and Chebyshev inequality,  we have that
\begin{align}
    \mu(S_\epsilon^c)\leq \mu(\|u\|_{L^2}\leq \epsilon)+\mu(\|u\|_{H^1}\geq \epsilon^{-1})\leq C\epsilon.
\end{align}
Overall, for all $\epsilon>0$
\begin{align}
   \mu(\|u\|_{L^2}^2\in\Gamma)\leq C\epsilon+\frac{2A_0}{\epsilon}\ell(\Gamma).
\end{align}
This shows the absolute continuity property w.r.t. Lebesgue measure on $\R$.
\end{proof}
\begin{lemma}\label{lem:MN_estimate}
Let $u:=u^\alpha_N$ be a stationary solution to \eqref{eq3}. We have the following estimate :
\begin{equation}
\mathbb{E}\left[\frac{\mathcal{M}_N(u)}{\|u\|_{L^2}}\right]
\leq
C_1\,\mathbb{E}\mathcal{E}_0(u)+C_2\leq C,
\end{equation}
where $C$ is independent of $(\alpha,N)$.
\end{lemma}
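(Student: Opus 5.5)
The plan is to bound $\mathcal{M}_N(u)/\|u\|_{L^2}$ pointwise by splitting into the regions $\{\|u\|_{L^2}\geq 1\}$ and $\{\|u\|_{L^2}<1\}$. Then we integrate against the stationary law $\mu_N^\alpha$ and invoke the uniform bound $\int \mathcal{E}_0\,d\mu_N^\alpha\le C$ from \eqref{estimate4}. The same bound holds for $\mathcal{E}(u)\geq \mathcal{E}_0(u)-\mathcal{M}(u)-\tilde C$ combined with \eqref{estimate3}.

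On $\{\|u\|_{L^2}\geq 1\}$ we simply have $\mathcal{M}_N(u)/\|u\|_{L^2}\leq |\mathcal{M}_N(u)|$. Each term of $\mathcal{V}_N$ is dominated by a term of $\mathcal{E}_0$, since $\|u\|_{L^{2p+2}}^{2p+2}=\|u^{p+1}\|_{L^2}^2\le\|u^{p+1}\|_{H^1}^2$. The kinetic part satisfies $|\mathcal{K}_N|\leq \|u\|_{L^2}\|u\|_{W^{1,2+\delta}}^{1+\delta}\lesssim 1+\mathcal{E}_0(u)$ by Young's inequality. This piece is therefore bounded by $C_1\mathcal{E}_0+C_2$.

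On $\{\|u\|_{L^2}<1\}$ the idea is to factor out one power of $\|u\|_{L^2}$ from every term.
\begin{itemize}
\item For the first term of $\mathcal{V}_N$ this is immediate: $\|u\|_{L^2}^2/\|u\|_{L^2}=\|u\|_{L^2}$.
\item For the power terms, use Hölder or Gagliardo–Nirenberg: $\|u\|_{L^{2p+2}}^{2p+2}\leq \|u\|_{L^2}\,\|u\|_{L^{4p+2}}^{2p+1}$, where $\|u\|_{L^{4p+2}}^{2p+1}=\|u^{p+1}\|_{L^{(4p+2)/(p+1)}}^{(2p+1)/(p+1)}$. This last quantity is controlled by $\|u^{p+1}\|_{H^1}^{2}$ up to a constant of size at most $C^p$ times lower order, via the Sobolev embedding with the precise constant $\|v\|_{L^q}\lesssim\sqrt q\|v\|_{H^1}$ (here $q\le 4$).
\item Summing with weights $\gamma^p/p!$ (respectively $\beta^p/p!$), a slightly smaller exponent appears, since $\gamma>4\beta$ leaves room. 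The result is dominated by $\sum\frac{\gamma^p}{p!}\|u^{p+1}\|_{H^1}^2+C$.
\item The exponential prefactor $e^{\gamma C_1\|P_N(e^{\beta|u|^2}u)\|^2}$ is kept in place, because $\mathcal{E}_0$ contains it multiplying $\|u\|_{H^1}^2$ and the $\beta$-sums.
\item The kinetic part gives $|\mathcal{K}_N|/\|u\|_{L^2}\le\|u\|_{H^1}^{1+\delta}\lesssim 1+\|u\|_{H^1}^2$.
\end{itemize}

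The main obstacle is making the $\beta$-term estimate uniform in $p$ while keeping the exponential prefactor. The factor $\|u\|_{L^{4p+2}}^{2p+1}$ must be absorbed by $\|u^{p+1}\|_{H^1}^2$ without a $p$-dependent loss beyond geometric growth. I would first try the bound $\|u\|_{L^{4p+2}}^{2p+1}\leq 1+\|u\|_{L^{4p+2}}^{2p+2}$ together with the argument already used in the paper, namely $\|u\|_{L^{4p+2}}^{2p+2}\le C_3\|\nabla u^{p+1}\|_{L^2}^2$ with $C_3$ independent of $p$. This directly produces terms of $\mathcal{E}_0$, plus the constant $\sum\beta^p/p!=e^\beta$ multiplied by the exponential prefactor, which is itself controlled by $\mathcal{E}_0$ through its $\|u\|_{H^1}^2$ term only if we also use $\|u\|_{H^1}\gtrsim$. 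Failing that, I would bound the prefactor times a constant by $\mathcal{E}(u)$ using \eqref{Est_E.Cal_E}-type moment bounds on $e^{\gamma C_1\|P_N(e^{\beta|u|^2}u)\|^2}\|P_N(e^{\beta|u|^2}u)\|^2$. Integrating then gives the claim with $C$ independent of $\alpha$ and $N$.
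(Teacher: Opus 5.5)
Your route is essentially the paper's. You bound $\mathcal{M}_N(u)/\|u\|_{L^2}$ pointwise by $C_1\mathcal{E}_0(u)+C_2$ and handle the power sums by $\|u\|_{L^{2p+2}}^{2p+2}\le \|u\|_{L^2}\|u\|_{L^{4p+2}}^{2p+1}$ plus a Sobolev bound uniform in $p$. The kinetic term goes through $L^{2/(1+\delta)}\hookrightarrow H^{-1}$, and you finish by integrating against $\mu_N^\alpha$. Your extra split at $\|u\|_{L^2}=1$ is harmless, since $\mathcal{V}_N\le 2\mathcal{E}_0$ pointwise.

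\textbf{The gap.} The gap is the one you flag yourself and do not close: controlling the bare factor $e^{\gamma C_1X^2}$, where $X=\|P_N(e^{\beta|u|^2}u)\|_{L^2}$. On $\{\|u\|_{L^2}<1\}$ it appears in two places.
\begin{enumerate}
\item In the first term of $\mathcal{V}_N$, which after division is $Ce^{\gamma C_1X^2}\|u\|_{L^2}$. This is not ``immediate'', because small mass says nothing about $X$.
\item In the term $Ce^{\beta}e^{\gamma C_1X^2}$, which your bound $\|u\|_{L^{4p+2}}^{2p+1}\le 1+\|u\|_{L^{4p+2}}^{2p+2}$ produces from the $\beta$-sum.
\end{enumerate}
As you note, the $e^{\gamma C_1X^2}\|u\|_{H^1}^2$ part of $\mathcal{E}_0$ cannot absorb this factor when $u$ is small. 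Your fallback via \eqref{Est_E.Cal_E} is not the relevant estimate, since that bound concerns $E(u)\mathcal{E}_0(u)$.

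\textbf{The fix.} The missing step is elementary. Split into $X<1$ and $X\ge 1$:
\[
e^{\gamma C_1X^2}\le e^{\gamma C_1}+X^2e^{\gamma C_1X^2}\le e^{\gamma C_1}+\tfrac{2}{C}\,\mathcal{E}_0(u),
\]
because $\frac{C}{2}X^2e^{\gamma C_1X^2}$ is a summand of $\mathcal{E}_0(u)$. The paper obtains the same domination through the Young inequality for the Legendre pair $\Phi_\lambda(y)=\lambda^{-1}y\log y$, $\lambda=\gamma C_1$, using $\Phi_\lambda(e^{\lambda X^2})=X^2e^{\lambda X^2}$. A large part of its proof is spent on exactly this term, for both $e^{\gamma C_1X^2}\|u\|_{L^2}$ and $e^{\gamma C_1X^2}\cdot 1$. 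With this inequality inserted, your argument closes, and it is somewhat shorter than the paper's because it avoids computing $\Phi_\lambda^*$.

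\textbf{A smaller point.} You suggest the Sobolev constant might grow like $C^p$ and be compensated by $\gamma>4\beta$. That would be fatal for the $\gamma$-sum, where $\mathcal{E}_0$ offers no larger weight. No such loss occurs: $(4p+2)/(p+1)<4$ keeps the Sobolev constant uniform in $p$, and your last paragraph correctly uses this $p$-uniform constant $C_3$.
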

\begin{proof}
    Let us recall that \begin{align*} 
   \mathcal{M}_N(u_N)&={Ce^{\gamma C_1\lVert P_N(e^{\beta\abs{u_N}^2}u_N)\rVert_{L^2}^2}}\lVert u_N\rVert_{L^2}^2+{Ce^{\gamma C_1\lVert P_N(e^{\beta\abs{u_N}^2}u_N)\rVert_{L^2}^2}}\sum_{p\in\mathbb{N}}\frac{\beta^p}{p!}\lVert u_N\rVert_{L^{2p+2}}^{2p+2}+C_2\sum_{p\in \mathbb{N}}\frac{\gamma^p}{p!}\lVert u_N\rVert_{L^{2p+2}}^{2p+2}\\ &\quad \quad+\left\langle u_N,P_N\left((1-\Delta)^{-\frac{1}{2}}\left(|(1-\Delta)^{\frac{1}{2}}u_N|^\delta(1-\Delta)^{\frac{1}{2}}u_N    \right)\right)\right\rangle.
\end{align*}
When we divide by $\lVert u_N\rVert_{L^2}^2$, the first term will produce a term ${Ce^{\gamma C_1\lVert P_N(e^{\beta\abs{u_N}^2}u_N)\rVert_{L^2}^2}}\lVert u_N\rVert_{L^2}$ which can be control by using the Legendre transformation with the strict convex function such that $\Phi_\lambda(e^{\lambda x^2})=e^{\lambda x^2}x^2$. Indeed the generalized Young inequality yields:
\begin{align*}
    {e^{\gamma C_1\lVert P_N(e^{\beta\abs{u_N}^2}u_N)\rVert_{L^2}^2}}\lVert u_N\rVert_{L^2}&\leq \Phi_\lambda({e^{\gamma C_1\lVert P_N(e^{\beta\abs{u_N}^2}u_N)\rVert_{L^2}^2}})+\Phi_\lambda^*(\lVert u_N\rVert_{L^2})\\
    &\leq {e^{\gamma C_1\lVert P_N(e^{\beta\abs{u_N}^2}u_N)\rVert_{L^2}^2}}\lVert P_N(e^{\beta\abs{u_N}^2}u_N)\rVert_{L^2}^2+\Phi_\lambda^*(\lVert u_N\rVert_{L^2})
\end{align*}
 Let us consider
\[
\Phi_\lambda(y)=\frac{1}{\lambda}y\log y,\qquad y\geq 1,
\]
where $\lambda>0$ will be chosen later. Notice that
\[
\Phi_\lambda(e^{\lambda x^2})
=
\frac{1}{\lambda}e^{\lambda x^2}\lambda x^2
=
x^2e^{\lambda x^2}.
\]
We first compute the Legendre transform of $\Phi_\lambda$. By definition,
\[
\Phi_\lambda^*(s)
=
\sup_{y\geq 1}
\left\{
sy-\frac{1}{\lambda}y\log y
\right\}.
\]
Let
\[
F_s(y)=sy-\frac{1}{\lambda}y\log y .
\]
A direct computation gives
\[
F_s'(y)
=
s-\frac{1+\log y}{\lambda}.
\]
Hence, the critical point satisfies
\[
\log y=\lambda s-1,
\]
which gives
\[
y_s=e^{\lambda s-1}.
\]
For $s\geq \lambda^{-1}$, we have $y_s\geq1$, and therefore the supremum is
attained at $y_s$. Consequently,
\[
\begin{aligned}
\Phi_\lambda^*(s)
&=
s e^{\lambda s-1}
-\frac{1}{\lambda}e^{\lambda s-1}(\lambda s-1)\\
&=
\frac{1}{\lambda}e^{\lambda s-1}.
\end{aligned}
\]
For $0\leq s<\lambda^{-1}$, the maximum is attained at the boundary point
$y=1$, which yields $\Phi_\lambda^*(s)=s$. Therefore,
\[
\Phi_\lambda^*(s)
=
\begin{cases}
s, & 0\leq s\leq \lambda^{-1},\\[2mm]
\dfrac{1}{\lambda}e^{\lambda s-1},
& s\geq \lambda^{-1}.
\end{cases}
\]
In particular, using the elementary inequality
\[
\lambda s\leq \varepsilon s^2+\frac{\lambda^2}{4\varepsilon},
\qquad \varepsilon>0,
\]
we obtain
\[
\Phi_\lambda^*(s)
\lesssim_{\lambda,\varepsilon}
e^{\varepsilon s^2}.
\]
We now choose $\varepsilon>0$ such that
\[
\varepsilon<\gamma .
\]
Hence,
\[
\Phi_\lambda^*(\|u_N\|_{L^2})
\lesssim_{\lambda,\varepsilon}
e^{\varepsilon\|u_N\|_{L^2}^{2}}.
\]

It remains to relate this quantity to the potential energy. Expanding the
exponential, we have
\[
e^{\varepsilon\|u_N\|_{L^2}^{2}}
=
\sum_{p=0}^{\infty}
\frac{\varepsilon^p}{p!}
\|u_N\|_{L^2}^{2p}.
\]
Since $M$ is compact, Hölder's inequality gives, for every $p\geq0$,
\[
\|u_N\|_{L^2}^{2p+2}
\leq
|M|^p
\|u_N\|_{L^{2p+2}}^{2p+2}.
\]
Therefore,
\[
\begin{aligned}
e^{\varepsilon\|u_N\|_{L^2}^{2}}
&=
\sum_{p=0}^{\infty}
\frac{\varepsilon^p}{p!}
\|u_N\|_{L^2}^{2p}\lesssim e^\varepsilon+\sum_{p=0}^\infty\frac{(\varepsilon |M|)^p}{p!}\lVert u_N\rVert_{L^{2p+2}}^{2p+2}.
\end{aligned}
\]
Choosing $\varepsilon$ sufficiently small so that
\[
\varepsilon |M|\leq \gamma ,
\]
we finally obtain
\[
{e^{\gamma C_1\lVert P_N(e^{\beta\abs{u_N}^2}u_N)\rVert_{L^2}^2}}\lVert u_N\rVert_{L^2}
\lesssim {e^{\gamma C_1\lVert P_N(e^{\beta\abs{u_N}^2}u_N)\rVert_{L^2}^2}}\lVert P_N(e^{\beta\abs{u_N}^2}u_N)\rVert_{L^2}^2
+\sum_{p=0}^{\infty}
\frac{\gamma^p}{p!}
\|u_N\|_{L^{2p+2}}^{2p+2}\lesssim \mathcal{E}_0(u).
\]
for the second term, we have to control the term 
\begin{align*}
    {Ce^{\gamma C_1\lVert P_N(e^{\beta\abs{u_N}^2}u_N)\rVert_{L^2}^2}}{\sum_{p\in\mathbb{N}}\frac{\beta^p}{p! \lVert u_N\rVert_{L^2} }\lVert u_N\rVert_{L^{2p+2}}^{2p+2}}
\end{align*}
Let us first remark that 
\begin{align*}
    \lVert u_N\rVert_{L^{2p+2}}\lesssim\lVert u\rVert_{L^2}^\frac{1}{2p+2}\lVert u\rVert_{L^{2(2p+1)}}^{1-\frac{1}{2p+2}}
\end{align*}
Thus 
\begin{align*}
    \lVert u_N\rVert_{L^{2p+2}}^{2p+2}&\lesssim\lVert u_N\rVert_{L^2}\lVert u_N\rVert_{L^{2(2p+1)}}^{2p+1}=\lVert u_N \rVert_{L^2}\lVert u^{p+1}\rVert_{L^{\frac{2(2p+1)}{p+1}}}^\frac{2p+1}{p+1}\\
    &\lesssim \lVert u_N\rVert_{L^2}\left(\frac{2(2p+1)}{p+1}\right)^\frac{2p+1}{2p+2}\lVert \nabla u^{p+1}\rVert_{L^2}^\frac{2p+1}{p+1}\\
    &\lesssim  \lVert u_N \rVert_{L^2}\left( (\sqrt{2})^{4p+2}+\lVert \nabla u^{p+1}\rVert_{L^2}^2 \right)
\end{align*}
Therefore 
\begin{align*}
  {e^{\gamma C_1\lVert P_N(e^{\beta\abs{u_N}^2}u_N)\rVert_{L^2}^2}}{\sum_{p\in\mathbb{N}}\frac{\beta^p}{p! \lVert u_N\rVert_{L^2} }\lVert u_N\rVert_{L^{2p+2}}^{2p+2}}&\lesssim \Phi({e^{\gamma C_1\lVert P_N(e^{\beta\abs{u_N}^2}u_N)\rVert_{L^2}^2}})+\Phi^*(1) + {e^{\gamma C_1\lVert P_N(e^{\beta\abs{u_N}^2}u_N)\rVert_{L^2}^2}} \lVert \nabla u_N^{p+1}\rVert_{L^2}^2\\
  &\lesssim \Phi^*(1)+ {e^{\gamma C_1\lVert P_N(e^{\beta\abs{u_N}^2}u_N)\rVert_{L^2}^2}}\lVert P_N(e^{\beta\abs{u_N}^2}u_N)\rVert_{L^2}^2\\
  &\qquad+{e^{\gamma C_1\lVert P_N(e^{\beta\abs{u_N}^2}u_N)\rVert_{L^2}^2}} \lVert \nabla u_N^{p+1}\rVert_{L^2}^2\\
  &\lesssim \Phi^*(1)+\mathcal{E}_0(u)
\end{align*}
Also for the third term, we have
\begin{align*}
    {\sum_{p\in\mathbb{N}}\frac{\gamma^p}{p! \lVert u_N\rVert_{L^2} }\lVert u_N\rVert_{L^{2p+2}}^{2p+2}} \lesssim C+\sum_{p\in \mathbb{N}}\frac{\gamma^p}{p!}\lVert \nabla u_N^{p+1}\rVert_{L^2}^2\lesssim C+\mathcal{E}_0(u)
\end{align*}
For the last term, since we are in two-space dimension, $L^{1+\frac{1-\delta}{1+\delta}}\hookrightarrow H^{-1}$, We have:
\begin{align*}
  \left\langle u_N,P_N\left((1-\Delta)^{-\frac{1}{2}}\left(|(1-\Delta)^{\frac{1}{2}}u_N|^\delta(1-\Delta)^{\frac{1}{2}}u_N    \right)\right)\right\rangle&\leq \lVert u_N\rVert_{L^2}\lVert u_N\rVert_{H^1}^{1+\delta} \leq \mathcal{E}_0(u)+C. 
\end{align*}
\end{proof}

\subsection{Dimensionality of the measure }
\begin{proposition}\label{prop:dimension_measure}
 The measure $\mu$ is of at least two-dimension in the sense that
\[
\mu(K)=0
\]
for every compact set $K\subset H^{1}(\mathbb{T})$ 
with Hausdorff dimension  $\dim_{H}(K)<2.$
\end{proposition}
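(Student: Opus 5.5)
The plan is to follow the classical Kuksin--Shirikyan dimension argument, which upgrades the one-dimensional absolute continuity of the mass (established above) to a two-dimensional statement. I would use a second conserved functional, namely the energy $E$, or a functional built from it. The key step is to show that the pushforward of $\mu$ under a map $F=(F_1,F_2):H^1\to\mathbb{R}^2$ is absolutely continuous with respect to Lebesgue measure on $\mathbb{R}^2$. I would take $F_1(u)=\|u\|_{L^2}^2$, and $F_2$ either the energy $E(u)$ or a smooth regularized version such as $F_2(u)=\|(1-\Delta)^{1/2}P_Mu\|_{L^2}^2$, which is Lipschitz on bounded sets of $H^1$. Given absolute continuity of $F_*\mu$, the conclusion is immediate. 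If $K\subset H^1$ is compact with $\dim_H K<2$, then, since $F$ is locally Lipschitz on $H^1$, $\dim_H F(K)\le \dim_H K<2$. Hence $F(K)$ has two-dimensional Lebesgue measure zero, and $\mu(K)\le \mu(F^{-1}(F(K)))=0$.

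To obtain absolute continuity of the joint law, I would apply the Itô formula to $G(F(u))$, with $u=u_N^\alpha$ the stationary solution of \eqref{eq3} and $G\in C^2(\mathbb{R}^2)$ arbitrary. Stationarity kills the time derivative and the martingale term, and we get
\begin{align*}
\mathbb{E}\Big[\sum_{i}\partial_iG(F(u))\,\mathcal{D}_i(u)+\tfrac12\sum_{i,j}\partial_{ij}G(F(u))\,\sigma_{ij}(u)\Big]=0,
\end{align*}
where $\mathcal{D}_i$ collects the drift contributions $-\mathcal{M}$, $-\mathcal{E}$ and the Itô corrections. Here $\sigma_{ij}(u)=\sum_m a_m^2 F_i'(u;e_m)F_j'(u;e_m)$ is the diffusion matrix. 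By Krylov's estimate for two-dimensional Itô processes (as in Kuksin--Shirikyan, Theorem 5.2.12 type results), this identity yields
\begin{align*}
\mathbb{E}\big[\mathbf 1_\Gamma(F(u))\,(\det\sigma(u))^{1/2}\big]\le C\,\ell_2(\Gamma)^{1/2}\,\mathbb{E}\big(|\mathcal D_1|+|\mathcal D_2|+\operatorname{tr}\sigma\big).
\end{align*}
The right-hand side is bounded uniformly in $(\alpha,N)$ by \eqref{estimate3}, \eqref{estimate4} and \eqref{crucial estimate}, and possibly also \eqref{Est_E.Cal_E} for the Itô corrections of $E$. One then passes to the limits $\alpha\to0$ and $N\to\infty$ by weak convergence, first approximating $\mathbf 1_\Gamma$ for open $\Gamma$ and then using regularity of the measure.

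The main obstacle is the non-degeneracy of $\det\sigma(u)$, that is, a lower bound on the Gram determinant of the vectors $(a_mF_1'(u;e_m))_m$ and $(a_mF_2'(u;e_m))_m$ in $\ell^2$. For $F_1$ the vector is $2a_m(u,e_m)$. For $F_2$ with the regularized choice it is $2a_m(1+\lambda_m)(u,e_m)$ for $m\le M$. The determinant therefore vanishes only when the spectral weights of $u$ concentrate on a single eigenvalue cluster of $-\Delta$. By a Cauchy--Schwarz variance identity, $\det\sigma$ is bounded below by $c\,a_M^4$ times the variance of $\lambda_m$ under the weights $|(u,e_m)|^2$ restricted to $m\le M$. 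I would localize to the set
\begin{align*}
S_\epsilon=\{\det\sigma(u)\ge\epsilon,\ \|u\|_{H^1}\le\epsilon^{-1}\},
\end{align*}
as was done for the mass. The hard part is then showing $\mu(S_\epsilon^c)\to0$, i.e.\ that $\mu$ gives no mass to ``spectrally one-cluster'' functions; this set is closed but not obviously null. I would first try to prove, via a localized analogue of the Shirikyan local-time identity \eqref{Identity_Shir1} applied to the functional $\sum_m\lambda_m|(u,e_m)|^2-\bar\lambda\|u\|^2$ near the degenerate set, that the law of this degeneracy functional has no atom at $0$, using that all $a_m\neq0$. If that fails, a fallback is to restrict to $K\cap S_\epsilon$ and note that $K\setminus\bigcup_\epsilon S_\epsilon$ lies in a finite union of finite-dimensional eigenspace sets, each of which would need separate handling.
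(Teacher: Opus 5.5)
Your overall architecture is the same as the paper's. You use a Krylov estimate for a two-component functional of the stationary Galerkin process $u_N^\alpha$ of \eqref{eq3}, bounds uniform in $(\alpha,N)$ and weak limits. You then observe that a locally Lipschitz image of a compact set of Hausdorff dimension $<2$ is Lebesgue-null in $\mathbb{R}^2$.

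The gap lies in the choice of the second functional and in the nondegeneracy step. That step is the heart of the proof, and you leave it open.

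First, the regularized option $F_2(u)=\|(1-\Delta)^{1/2}P_Mu\|_{L^2}^2$ cannot be used, because it is not conserved by the Galerkin Hamiltonian flow. In Itô's formula the linear part $i(\Delta-1)u$ drops out, but the nonlinearity contributes the drift
\[
-2\big\langle (1-\Delta)P_Mu,\; i\,(e^{\beta|u|^2}-1)u\big\rangle .
\]
This term carries no factor $\alpha$ and is nonzero in general, whereas the actual diffusion matrix of the Itô process is $\alpha\sigma(u)$. Krylov's estimate then only gives $\mathbb{E}\big[\mathbf{1}_\Gamma(F(u))(\det\sigma(u))^{1/2}\big]\lesssim\alpha^{-1}\ell_2(\Gamma)^{1/2}$, which is destroyed by the inviscid limit. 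Your displayed inequality, with $\mathcal D_i$ built only from $-\mathcal M$, $-\mathcal E$ and Itô corrections, holds only when both components are annihilated by the Hamiltonian vector field, as $M$ and $E$ are.

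The same objection rules out your proposed local-time identity for $\sum_{m\le M}\lambda_m|(u,e_m)|^2-\bar\lambda\|P_Mu\|_{L^2}^2$, which is not conserved either. In addition, the degenerate set of the regularized pair contains the infinite-dimensional set $\{P_Mu=0\}$. So your fallback, which treats that set as finitely many finite-dimensional pieces, is inaccurate.

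You are therefore forced to take $F_2=E$, and for that choice you give no nondegeneracy argument. The idea the paper uses is algebraic rather than probabilistic:
\begin{itemize}
\item If the Gram matrix of $(a_m\langle u,e_m\rangle)_m$ and $(a_m\langle E'(u),e_m\rangle)_m$ is singular, then $\gamma_1u+\gamma_2\big(-\Delta u+(e^{\beta|u|^2}-1)u\big)=0$ for some $(\gamma_1,\gamma_2)\neq0$.
\item Pairing this relation with $i|u|^2u$ kills the $\gamma_1$-term and the nonlinear term pointwise. When $\gamma_2\neq0$ this leaves $\langle i\Delta u,|u|^2u\rangle=0$, so degeneracy is confined to $O=\{\langle i\Delta u,|u|^2u\rangle=0\}$.
\item On $O$ the paper replaces $E$ by $F(u)=\frac14\|u\|_{L^4}^4$. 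Its Hamiltonian derivative vanishes there, so the drift stays of order $\alpha$, and its degeneracy relation $\gamma_1u+\gamma_2|u|^2u=0$ is very rigid.
\item The region where $X(u)$ is small is absorbed by the bound $\mathbb{P}(M(u)\le\epsilon)\lesssim\sqrt\epsilon$, proved earlier for the mass.
\end{itemize}

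The paper itself only sketches this step, so be careful if you follow it:
\begin{itemize}
\item the reduction is done on the unprojected relation rather than on $P_N(\gamma_1u+\gamma_2E'(u))=0$;
\item $\gamma_1u+\gamma_2|u|^2u=0$ forces $|u|$ to be constant, not $u$;
\item the lower bound on the determinant, uniform in $N$, is asserted from continuity alone.
\end{itemize}
A complete proof along your lines must supply exactly this quantitative nondegeneracy, for a pair of functionals annihilated by the Hamiltonian part.
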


Let us recall the Krylov estimate for stochastic processes (see Section $7.9$ \cite{KuksinShirikyan2012}). Consider the following $d$-dimensional process :
\begin{align}
    dx =b_tdt+\sigma_tdB_t
\end{align}
where $\sigma$ is a $d\times m$-matrix and $B$ is a $m$-dimensional Brownian motion. For all bounded measurable function $f: \R^d\to\R$, we have 
\begin{align}
    \mathbb{E}\int_0^1f(x_t)(\text{det}(\sigma\sigma^T))^\frac{1}{d}dt\leq C_d\|f\|_{L^d}\mathbb{E}\int_0^1|b_t|dt.\label{Krylov_estimate}
\end{align}
We will work on the Galerkin fluctuation-dissipation problem \eqref{eq3} in order to obtain uniform bounds in $(\alpha,N)$ on the relevant estimate. Then, the passage to the limits follows usual arguments. To simplify the notation, we write $u:=u^\alpha_N$.
We define the set 
\begin{align}
    O=\{u\in L^2\ :\ (i\Delta u,|u|^2u)=0\}.
\end{align}
Consider the following vector
\begin{align}
    X(u)=\begin{cases}
        \left(\begin{array}{c}
     M(u)\\ 
     F(u)
\end{array}\right) \quad \text{if $u\in O$} \\ \\
\left(\begin{array}{c}
     M(u)\\
     E(u)
\end{array}\right) \quad \text{if $u\in O^c$}
    \end{cases}
\end{align}
where $M(u)$ and $E(u)$ are the mass and energy functionals, and 
\begin{align}
    F(u)=\frac{1}{4}\int_M |u|^4dx.
\end{align}
Set 
\begin{align}
    \mathcal{F}(u)=\langle\nabla_{\bar u}F,\mathcal{L}u\rangle.
\end{align}
We note that $\mathcal{F}(u)\lesssim \mathcal{E}(u)$.

We wish to apply the Krylov estimate to the vector $X=X(u)$. We will split it following $O$ and $O^c$.

\begin{itemize}
    \item \textbf{On the set $O$}. First remark that 
\begin{align}
    \langle\nabla_{\bar u}F,i(\Delta -e^{\beta|u|^2}+1)u\rangle_{|_{O}}=\langle |u|^2u,i(\Delta -e^{\beta|u|^2}+1)u\rangle_{|_{O}}=0.
\end{align}
Hence
\begin{align}
    dX &=\alpha\left(\begin{array}{c}
        -\mathcal{M}(u)+\frac{A_0^N}{2} \\
         -\mathcal{F}(u)+\frac{1}{2}\sum_{|m|\leq N}|a_m|^2\langle u^2,e_m^2\rangle
    \end{array}\right)dt
    +\sqrt{\alpha}\sum_{|m|\leq N}a_m
    \left(\begin{array}{c}
        \langle u,e_m \rangle \\
         \langle |u|^2u,e_m \rangle
    \end{array}\right)d\beta_m \\
    &=: \alpha b_Xdt+\sigma_X\cdot (d\beta_1\cdots d\beta_m)^T.
\end{align}
In order to apply Krylov's estimate, we easily observe that 
\begin{align}
    \mathbb{E}|b_X|< C,
\end{align}
where $C$ is independent of $(\alpha,N)$.
We then are left the analysis of the determinant of the matrix 
\begin{align}
    \sigma_X\sigma_X^*=\alpha\left(\begin{array}{cc}
       \sum_{|m|\leq N}|a_m|^2|\langle u,e_m\rangle|^2  & \sum_{|m|\leq N}|a_m|^2\langle u,e_m\rangle\langle |u|^2u,e_m\rangle \\
        \sum_{|m|\leq N}|a_m|^2\langle u,e_m\rangle\langle |u|^2u,e_m\rangle & \sum_{|m|\leq N}|a_m|^2|\langle |u|^2u,e_m\rangle|^2
    \end{array}\right).
\end{align}
Remark that this matrix is real. Let $\gamma=(\gamma_1\ \gamma_2)\in\R^2$. We have that
\begin{align}
    \gamma \sigma_X\sigma_X^*\gamma^T=\alpha\sum_{|m|\leq N}|a_m|^2\left(\gamma_1u+\gamma_2|u|^2u,e_m\right)^2.
\end{align}
Hence, since all $a_m$ are nonzero, the matrix is singular if and only if 
\begin{align}
    \gamma_1u(x)+\gamma_2|u(x)|^2u(x)=0\quad \forall x.
\end{align}
This situation holds only if : $(\gamma_1=\gamma_2=0)$ or $u=\text{Const}$.
    \item \textbf{On the set $O^c$}. In this setting, on one hand we have, from estimate \eqref{crucial estimate},  that
    \begin{align}
        \mathbb{E}|b_X| \leq C,
    \end{align}
    where $C$ is independent of $(\alpha,N).$ On the other hand, the matrix is given by
    \begin{align}
        \sigma_X\sigma_X^*=\alpha\left(\begin{array}{cc}
       \sum_{|m|\leq N}|a_m|^2|\langle u,e_m\rangle|^2  & \sum_{|m|\leq N}|a_m|^2\langle u,e_m\rangle\langle -\Delta u+(e^{\beta|u|^2}-1)u,e_m\rangle \\
        \sum_{|m|\leq N}|a_m|^2\langle u,e_m\rangle\langle -\Delta u+(e^{\beta|u|^2}-1)u,e_m\rangle & \sum_{|m|\leq N}|a_m|^2|\langle -\Delta u+(e^{\beta|u|^2}-1)u,e_m\rangle|^2
    \end{array}\right).
    \end{align}
    Similarly to the analysis above, singularity of the matrix leads to the equation:
    \begin{align}
        \gamma_1u(x)+\gamma_2( -\Delta u(x)+(e^{\beta|u(x)|^2}-1)u(x))=0\quad \forall x,
    \end{align}
    $(\gamma_1,\gamma_2)\in\R^2$.
    Let us consider the complementary two cases :
    \begin{enumerate}
        \item {\bfseries case 1: $\gamma_2=0$.} 
This implies : $(\gamma_2=0=\gamma_1)$ or $u=\text{Const}$.
        \item {\bfseries case 2: $\gamma_2\neq 0$.}
        Taking the (real) scalar product against $i|u|^2u$, we find
        \begin{align}
            \langle |u|^2u,i\Delta u\rangle =0,
        \end{align}
        which is impossible since $u$ belongs to $O^c.$
    \end{enumerate}
    Summary : on $O^c$, we also have that : $(\gamma_1=\gamma_2=0)$ or $u=\text{Const}$.
\end{itemize}
Overall, we arrive at the following 
\begin{lemma}\label{Lem:nondeg:det}
    \begin{align}
        \text{det}(\sigma_X\sigma_X^*)=0 \Leftrightarrow u=\text{Const}.
    \end{align}
\end{lemma}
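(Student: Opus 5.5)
The plan is to show that the nondegeneracy of $\sigma_X\sigma_X^*$ reduces to pointwise linear independence of two real functions on $M$, and that these functions can be dependent only when $u$ is constant. Since $\sigma_X\sigma_X^*$ is a real symmetric positive semi-definite $2\times2$ matrix, its determinant vanishes if and only if there is a nonzero $\gamma=(\gamma_1,\gamma_2)\in\R^2$ with $\gamma\sigma_X\sigma_X^*\gamma^T=0$. We will use the quadratic form identity already computed above,
\[
\gamma\sigma_X\sigma_X^*\gamma^T=\alpha\sum_{|m|\leq N}|a_m|^2\langle \gamma_1 u+\gamma_2 w(u),e_m\rangle^2 ,
\]
where $w(u)=|u|^2u$ on $O$ and $w(u)=-\Delta u+(e^{\beta|u|^2}-1)u$ on $O^c$. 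Because every $a_m\neq0$, vanishing of this form is equivalent to $P_N(\gamma_1u+\gamma_2w(u))=0$. For $u\in E_N$ we read this as the identity $\gamma_1u+\gamma_2w(u)=0$, which is the form used in the bullet analysis. We then treat the two sets separately.

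\textbf{On $O$.} If $\gamma_2=0$, then $\gamma_1u=0$, so either $\gamma_1=0$ (excluded, since $\gamma\neq0$) or $u=0$, which is a constant. If $\gamma_2\neq0$, then $|u|^2u=cu$ with $c=-\gamma_1/\gamma_2$. Hence $|u(x)|^2=c$ wherever $u(x)\neq0$. For $u\in E_N$, which is smooth and real-analytic in the torus or generic setting, this forces $|u|$ to be constant on $M$. From there we must get from $|u|$ constant to $u$ constant. The plan is to test $u\in O$ directly: $\langle i\Delta u,|u|^2u\rangle=c\langle i\Delta u,u\rangle=0$ holds automatically, so the membership condition gives no extra information. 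Instead we will use the structure of $E_N$ under the Dirichlet or zero-mean constraints: a function with $|u|$ constant and $u|_{\partial M}=0$ must vanish. On a closed manifold we interpret ``Const'' as ``constant modulus''. I expect this to be the main obstacle; if it fails, I would enlarge the exceptional set in the statement to $\{|u|=\text{Const}\}$, which is still a lower-dimensional set and is harmless for the Krylov argument.

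\textbf{On $O^c$.} If $\gamma_2=0$, we conclude as before. If $\gamma_2\neq0$, then $-\Delta u+(e^{\beta|u|^2}-1)u=cu$. Taking the real scalar product with $i|u|^2u$ gives two facts: $\langle (e^{\beta|u|^2}-1-c)u,i|u|^2u\rangle=0$, since a real multiple of $u$ is orthogonal to $iu$ times a real function; and therefore $\langle -\Delta u,i|u|^2u\rangle=0$, i.e. $\langle i\Delta u,|u|^2u\rangle=0$. The second fact contradicts $u\in O^c$. So on $O^c$ singularity forces $\gamma=0$ or $u$ constant.

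For the converse direction, if $u$ is constant then $u$, $|u|^2u$ and $-\Delta u+(e^{\beta|u|^2}-1)u$ are all proportional to $u$, so some nonzero $\gamma$ annihilates the form and the determinant vanishes. This completes the equivalence.
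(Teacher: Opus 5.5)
Your route is the paper's own: the quadratic form with all $a_m\neq0$, the split into $O$ and $O^c$, the cases $\gamma_2=0$ and $\gamma_2\neq0$, and pairing with $i|u|^2u$ to contradict $u\in O^c$. The obstacle you flag on $O$ is real, and it is exactly where the paper's one-line assertion ``$u=\text{Const}$'' is unjustified. From $\gamma_1u+\gamma_2|u|^2u=0$ with $\gamma_2\neq0$ you only get $|u|^2=-\gamma_1/\gamma_2$ on $\{u\neq0\}$. Hence $|u|$ is constant; continuity of $|u|$ and connectedness of $M$ suffice, and no analyticity is needed. On a closed surface this cannot be upgraded to $u$ constant. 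On $\mathbb{T}^2$, take $u=e^{ik\cdot x}\in E_N$ with $k\neq0$: then $|u|^2u=u$, $u$ lies in $O$, and $\gamma=(1,-1)$ annihilates the form exactly (here $|u|^2u\in E_N$, so no projection issue arises). So the lemma as stated is false on closed surfaces, and your obstacle cannot be overcome. A zero-mean constraint does not help, since $e^{ik\cdot x}$ has mean zero; only the Dirichlet case gives $u\equiv0$.

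Under the pointwise reading, the correct statement is your fallback, and it is a genuine equivalence. Every $u$ with $|u|\equiv\rho$ lies in $O$ and is annihilated by $\gamma=(\rho^2,-1)$, while on $O^c$ the determinant never vanishes. What is not justified is that this is ``harmless for the Krylov argument''. The paper uses the lemma to get $\det(\sigma_X\sigma_X^*)\geq c_\epsilon$ on $\{|X(u)|\geq\epsilon\}$. But $\rho e^{ik\cdot x}$, and already the large constants allowed by the paper's own statement, are degenerate points with $|X(u)|$ arbitrarily large. You would therefore need a quantitative bound, uniform in $(\alpha,N)$, on the $\mu_N^\alpha$-mass near the degenerate set, not just the fact that this set is lower-dimensional.

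The step you take on faith is a genuine gap, shared with the paper: passing from vanishing of the form to the pointwise identity $\gamma_1u+\gamma_2w(u)=0$. Since $|u|^2u$ and $(e^{\beta|u|^2}-1)u$ are not in $E_N$, the form only sees their projections. The actual condition is therefore $\gamma_1u+\gamma_2P_N(|u|^2u)=0$ (resp.\ $\gamma_1u+\gamma_2(-\Delta u+P_N((e^{\beta|u|^2}-1)u))=0$). This is a nonlinear equation on $E_N$ whose solutions need not have constant modulus. Your $O^c$ cancellation $\langle(e^{\beta|u|^2}-1-c)u,i|u|^2u\rangle=0$ relies on the nonlinearity being a real function times $u$ pointwise, which $P_N$ destroys.

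Even the equivalence you state with $P_N(\gamma_1u+\gamma_2w(u))=0$ fails with the paper's real pairing $\langle u,v\rangle=\mathrm{Re}\int_M u\bar v$ and $N$ real Brownian motions. If the $e_m$ are real, every purely imaginary $u\in E_N$ lies in $O$ and makes all entries $\langle u,e_m\rangle$ and $\langle |u|^2u,e_m\rangle$ vanish, so $\sigma_X\sigma_X^*=0$ there. Any correct version of the lemma must be formulated for the projected finite-dimensional condition, with a noise acting on both real and imaginary parts. The pointwise identity used in both your proposal and the paper is not the right object.
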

\begin{proposition}  Let $u_N^\alpha$ be distributed by $(\mu_N^\alpha$. We have that
    \begin{align}
        \PP(X(u_N^\alpha)\in \Gamma)\leq C \ell_2(\Gamma)\quad \forall \Gamma\in\text{Bor}(\R^2),
    \end{align}
    where $C$ is independent of $(\alpha,N)$ and $\ell_2$ is the Lebesgue measure on $\R^2$. 
\end{proposition}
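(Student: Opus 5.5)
The plan is to apply the Krylov estimate \eqref{Krylov_estimate} with $d=2$ to the two-dimensional semimartingale $X_t=X(u_N^\alpha(t))$, where $u_N^\alpha$ is the stationary solution of \eqref{eq3} with law $\mu_N^\alpha$. I would take $f=1_\Gamma$, split according to $O$ and $O^c$ as in the computation preceding Lemma \ref{Lem:nondeg:det}, and then use stationarity to remove the time integral.

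\textbf{Step 1 (scaling in $\alpha$).} Both the drift $\alpha b_X$ and the diffusion matrix carry a factor of $\alpha$: one has $\sigma_X\sigma_X^*=\alpha\,\Sigma(u)$, where $\Sigma$ is the $\alpha$-free Gram matrix written above. Hence $(\det \sigma_X\sigma_X^*)^{1/2}=\alpha\,(\det\Sigma(u))^{1/2}$. In \eqref{Krylov_estimate} the factor $\alpha$ therefore appears on both sides and cancels. Stationarity then turns the left-hand side into $\mathbb{E}\big[1_\Gamma(X)\,(\det\Sigma(u))^{1/2}\big]$, where the expectation is taken under $\mu_N^\alpha$.

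\textbf{Step 2 (bound on the drift).} Next I would bound $\mathbb{E}|b_X|$ uniformly in $(\alpha,N)$. For the $M$-component this follows from \eqref{estimate3}. For the $E$- and $F$-components I would use $\mathcal{F}\lesssim\mathcal{E}$ together with \eqref{estimate4} and \eqref{crucial estimate}. The Itô correction terms $\sum_m|a_m|^2\langle u^2,e_m^2\rangle$ and the $E''$-terms are handled by Sogge's bound exactly as in \eqref{1}--\eqref{2}, using $A^1<\infty$. This yields
\begin{align*}
\mathbb{E}\big[1_\Gamma(X)(\det\Sigma)^{1/2}\big]\le C\|1_\Gamma\|_{L^2(\R^2)},
\end{align*}
uniformly in $(\alpha,N)$.

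\textbf{Step 3 (removing the weight).} I would write
\begin{align*}
\PP(X\in\Gamma)\le \eta^{-1/2}\,\mathbb{E}\big[1_\Gamma(X)(\det\Sigma)^{1/2}\big]+\PP(\det\Sigma<\eta)
\end{align*}
and show that $\PP(\det\Sigma<\eta)$ is small uniformly in $(\alpha,N)$. Lemma \ref{Lem:nondeg:det} identifies the zero set of $\det\Sigma$ as the constants. I would upgrade this to a quantitative lower bound on $\{\|u\|_{L^2}\ge\epsilon,\ \|u-\bar u\|_{H^1}\ge\epsilon,\ \|u\|_{H^1}\le\epsilon^{-1}\}$, by the same truncation to modes $|m|\le M$ with $a_M^2$ and $\lambda_M^{-1}$ used in the absolute-continuity proof. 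Concretely, $\gamma\Sigma\gamma^T\ge a_M^2\|P_M(\gamma_1u+\gamma_2 G(u))\|_{L^2}^2$, where $G(u)$ denotes the second gradient, and compactness in $H^{1^-}$ (coming from \eqref{crucial estimate}) supplies a uniform modulus.

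\textbf{Step 4 (the set near constants).} The remaining mass near constants is controlled as follows. The small-$\|u\|_{L^2}$ part is handled by the non-atomicity bound. For the part near nonzero constants, the one-dimensional curve of constants has zero two-dimensional image measure, so I would treat it by a separate one-dimensional Shirikyan-type identity applied to the mass.

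\textbf{Main obstacle.} The exponent is the hardest point. Krylov's estimate only gives $\|1_\Gamma\|_{L^2}=\ell_2(\Gamma)^{1/2}$, not $\ell_2(\Gamma)$. To reach the claimed linear bound I would not use \eqref{Krylov_estimate} as a black box. Instead I would run the underlying Itô argument directly, in the spirit of the identity \eqref{Identity_Shir1}. I would apply Itô's formula to $\Psi(X_t)$, where $\Psi=\Psi_\Gamma$ solves the elliptic problem $\mathrm{tr}(\Sigma D^2\Psi)=1_\Gamma$ in frozen-coefficient form. Stationarity then kills $\mathbb{E}\,d\Psi$, giving
\begin{align*}
\mathbb{E}\,1_\Gamma(X)\le \|\nabla\Psi_\Gamma\|_{L^\infty}\,\mathbb{E}|b_X|.
\end{align*}
The crux is obtaining $\|\nabla\Psi_\Gamma\|_{L^\infty}\lesssim\ell_2(\Gamma)$ on the nondegenerate region. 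I would first try Schauder-type bounds with the determinant lower bound from Step 3. I would be prepared to find that only $\ell_2(\Gamma)^{1/2}$ survives there, since the two-dimensional Green kernel has gradient of size $|x|^{-1}$, which is exactly $L^{2,\infty}$. In that case I expect the bound to require an additional a priori $L^\infty$ bound on the density of the mass-component, obtained from the one-dimensional identity.
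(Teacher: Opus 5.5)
Your Steps 1--2 and the Krylov part of Step 3 are, in substance, the paper's own proof. Both use Krylov's estimate \eqref{Krylov_estimate} with $d=2$. Both rely on the cancellation of $\alpha$ between $(\det\sigma_X\sigma_X^*)^{1/2}=\alpha(\det\Sigma)^{1/2}$ and the drift $\alpha b_X$, which the paper leaves implicit. Both need an $(\alpha,N)$-uniform bound on $\mathbb{E}|b_X|$, and both remove a degenerate region. The paper takes that region to be $\{|X|<\epsilon\}$ and controls it by $\PP(M(u)\le\epsilon)\le C\sqrt\epsilon$.

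An argument of this type can only give $\PP(X\in\Gamma)\le \PP(\text{degenerate region})+C_\eta\,\ell_2(\Gamma)^{1/2}$, and that is exactly where the paper's proof ends. It obtains $\PP(X\in\Gamma)\le C\sqrt\epsilon+C_\epsilon\ell_2(\Gamma)$; you are right that \eqref{Krylov_estimate} actually yields $\|1_\Gamma\|_{L^2}=\ell_2(\Gamma)^{1/2}$ there. It then concludes only $\PP(X\in\Gamma)\le\rho(\ell_2(\Gamma))$ for a non-decreasing $\rho$ with $\rho(0)=0$. That modulus bound is all that Corollary \ref{Cor:dim} and Proposition \ref{prop:dimension_measure} need. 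The linear bound $C\ell_2(\Gamma)$ in the statement is not what the paper establishes.

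The genuine gap in your proposal is the attempt to reach that linear bound, and the routes you suggest cannot close it.

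In Step 3, making $C\eta^{-1/2}\ell_2(\Gamma)^{1/2}$ of order $\ell_2(\Gamma)$ forces $\eta\gtrsim\ell_2(\Gamma)^{-1}\to\infty$. But then $\PP(\det\Sigma<\eta)\to1$.

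The elliptic route fails twice. First, your Gram matrix $\Sigma$ depends on $u$, not only on $X$. So no $\Psi$ on $\R^2$ satisfies $\mathrm{tr}(\Sigma(u)D^2\Psi(X))=1_\Gamma(X)$ along the process; this is precisely why one uses the determinant-weighted Krylov estimate. Second, even for the Laplacian with $\Gamma$ a disc of radius $r$, one has $\sup|\nabla\Psi_\Gamma|=r/2\sim\ell_2(\Gamma)^{1/2}$.

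A bounded density for the law of $M(u)$ does not help either. Take $\Gamma=[a,b]\times[s,s+h]$ with $h\to0$: then $\ell_2(\Gamma)\to0$, but the mass marginal only gives $\PP(X\in\Gamma)\le C(b-a)$.

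So the right target is the modulus $\rho$. For it, your Steps 1--3 suffice once you know that $\sup_{\alpha,N}\PP(\det\Sigma(u^\alpha_N)<\eta)\to0$ as $\eta\to0$.

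Your Step 4 does not supply this. The difficulty is not that $X$ might hit the curve $\{X(c):c\in\C\}$, which has zero area. It is that $u$ close to a nonzero constant has small $\det\Sigma$, while $X(u)$ lies merely near that curve. An identity for the mass alone gives no bound on the probability that $u$ is close to a nonzero constant, and that is what you need, uniformly in $(\alpha,N)$.

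The paper does not provide such a bound either. It asserts that $\det(\sigma_X\sigma_X^*)\ge c_\epsilon$ whenever $|X|\ge\epsilon$. On a closed surface this is contradicted by large nonzero constants: they lie in $O$, have $|X|\ge\epsilon$, and by Lemma \ref{Lem:nondeg:det} have zero determinant. The paper also passes over the loss of compactness that your $H^{1^-}$ truncation is designed to handle.
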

\begin{proof}
    Let us write
    \begin{align}
       \PP(X(u_N^\alpha)\in \Gamma) = \PP(X(u_N^\alpha)\in \Gamma\cap B(0,\epsilon))+\PP(X(u_N^\alpha)\in \Gamma\cap(\R^2\backslash B(0,\epsilon))) =: (1)+(2).
    \end{align}
    We have that
    \begin{align}
        (1) \leq \PP(M(u_N^\alpha)\leq \epsilon)\leq C\sqrt{\epsilon}.
    \end{align}
    Now, for $\min(M(u)^2+F(u)^2,M(u)^2+E(u)^2)\geq \epsilon^2$, we have by Lemma \ref{Lem:nondeg:det} and the continuity of the determinant that $\text{det}(\sigma_X\sigma_X^*)\geq c_\epsilon>0$. Taking $f=1_{\Gamma\cap(\R^2\backslash B(0,\epsilon))}$ in Krylov's estimate \eqref{Krylov_estimate}, we find
    \begin{align}
       c_\epsilon \PP(X(u_N^\alpha)\in \Gamma\cap(\R^2\backslash B(0,\epsilon)))\leq \mathbb{E}1_{\Gamma\cap(\R^2\backslash B(0,\epsilon))}(X(u_N^\alpha))(\text{det}(\sigma\sigma^T))^\frac{1}{d}dt\leq C\ell_2(\Gamma).
    \end{align}
    Hence,
    \begin{align}
        \PP(X(u_N^\alpha)\in \Gamma\cap(\epsilon,\infty))\leq C_\epsilon\ell_2(\Gamma).
    \end{align}

    Overall,
    \begin{align}
        \PP(X(u_N^\alpha)\in \Gamma)\leq C\sqrt{\epsilon}+C_\epsilon\ell_2(\Gamma),
    \end{align}
    where $C$ and $C_\epsilon$ do not depend on $(\alpha,N)$. Therefore, one finds a non-decreasing function $\rho:\R_+\to\R_+$ such that $\rho(0)=0$ satisfying
    \begin{align}
        \PP(X(u_N^\alpha)\in \Gamma)\leq \rho(\ell_2(\Gamma)).
    \end{align}
\end{proof}
Using the Portmanteau theorem and Ulam's regularity theorem, we obtain that
\begin{corollary}\label{Cor:dim}  Let $u$ be distributed by $\mu$. We have that
    \begin{align}
        \PP(X(u)\in \Gamma)\leq \rho(\ell_2(\Gamma))\quad \forall \Gamma\in\text{Bor}(\R^2).
    \end{align}
\end{corollary}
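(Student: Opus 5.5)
Corollary~\ref{Cor:dim} follows from the preceding Proposition by passing to the limits $\alpha\to 0$ and then $N\to\infty$, transferring the uniform bound $\PP(X(u_N^\alpha)\in\Gamma)\le\rho(\ell_2(\Gamma))$ to the limit measure $\mu$. Since $\rho$ and the constants are independent of $(\alpha,N)$, this is a weak-convergence argument in which lower semicontinuity of open-set probabilities does the work. Regularity of Borel measures on $\R^2$ then extends the bound from open sets to arbitrary Borel sets. Throughout, I will assume $\rho$ is continuous at $0$ and replace it if necessary by its right-continuous regularization $\tilde\rho(s)=\inf_{s'>s}\rho(s')$, which still vanishes at $0$ by construction ($\rho(s)\le C\sqrt{\epsilon}+C_\epsilon s$ for every $\epsilon$).

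\textbf{Step 1 (continuity of $X$).} We need $X$ to be continuous along the topology in which $\mu_N^\alpha\rightharpoonup\mu_N\rightharpoonup\mu$. The mass $M$ is continuous on $L^2$. The functional $F$ is continuous on $L^4$, hence on $H^{1-\kappa}$, which is where the Skorokhod representatives converge almost surely. The energy $E$ is only lower semicontinuous in that topology. For the energy component I would use the almost sure convergence $u_N\to u$ in $H^{1-\kappa}\cap L^p$ together with the energy bound \eqref{estimate7}, and argue along the Skorokhod coupling rather than through abstract weak convergence. The piecewise definition of $X$ according to $O$ or $O^c$ is a further issue: $O$ is closed in $H^1$, so on $O^c$ the map $X$ is continuous wherever $E$ is. I expect this to be the main obstacle. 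If $E(u_N)\to E(u)$ fails, I would weaken the conclusion to a bound on the $(M,F)$ components and on $(M,E)$ through a liminf inequality.

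\textbf{Step 2 (open sets).} Fix an open set $U\subset\R^2$. Then $X^{-1}(U)$ is open, or at least its indicator is lower semicontinuous along almost surely convergent sequences. By the Portmanteau theorem, or by Fatou's lemma applied on the Skorokhod space,
\[
\PP(X(u)\in U)\le\liminf_{N}\liminf_{\alpha}\PP(X(u_N^\alpha)\in U)\le\rho(\ell_2(U)).
\]

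\textbf{Step 3 (Borel sets).} For a Borel set $\Gamma$, outer regularity of $\ell_2$ gives open sets $U_k\supset\Gamma$ with $\ell_2(U_k)\downarrow\ell_2(\Gamma)$. Then $\PP(X(u)\in\Gamma)\le\PP(X(u)\in U_k)\le\tilde\rho(\ell_2(U_k))$, and letting $k\to\infty$ and using right continuity of $\tilde\rho$ gives the claim. Ulam's theorem, which says the law of $X(u)$ is tight and regular, guarantees that the sets and limits above are well defined.
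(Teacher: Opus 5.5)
Your route is the paper's route. The paper's entire proof is the sentence ``Using the Portmanteau theorem and Ulam's regularity theorem'': pass the open-set inequality through $\alpha\to0$ and $N\to\infty$, then use regularity to reach Borel sets. Your Steps 2--3 are fine \emph{given} Step 1, but Step 1 is a genuine gap, and your proposed remedy does not close it.

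For open $U$, the inequality $\PP(X(u)\in U)\le\liminf_N\PP(X(u_N)\in U)$ requires $X$ to be continuous, off a set of limit-measure zero, in the topology where the Skorokhod representatives converge. That topology is $H^{1-\kappa}\cap L^p$, which gives only weak convergence in $H^1$. In it, $E$ is merely lower semicontinuous, because there is no compactness in $H^1$. The paper uses exactly this to explain why $\mathcal{K}$ cannot be written in terms of $u$, and it only records $E(u)\le\liminf_j E(u_{N_j})$.

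The bound \eqref{estimate7} gives uniform integrability of $E(u_N)$, not convergence. Along a subsequence you get $E(u_N)\to\mathcal{E}^{*}$ a.s.\ with $\mathcal{E}^{*}\ge E(u)$. The inequality can be strict, because $\|\nabla u_N\|_{L^2}$ can lose mass through oscillations. The density bound therefore transfers to the law of $(M(u),\mathcal{E}^{*})$, not to that of $(M(u),E(u))$. No liminf inequality repairs this. If $E(u)$ were a function of $M(u)$ and $\mathcal{E}^{*}-E(u)$ an independent, continuously distributed defect, then $(M(u),\mathcal{E}^{*})$ would satisfy the bound while $(M(u),E(u))$ would be carried by a curve. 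So your fallback does not yield the corollary on $O^c$.

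The piecewise definition of $X$ creates a second problem that you leave open. $X$ jumps from $(M,E)$ to $(M,F)$ at every point of $O$ that is a limit of points of $O^c$. The functional $u\mapsto\langle i\Delta u,|u|^2u\rangle$ that defines $O$ is quadratic in $\nabla u$, so it is not continuous in $H^{1-\kappa}\cap L^p$. Hence $O$ is not closed in the relevant topology, and your remark that $O$ is closed in $H^1$ does not help.

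The continuous mapping theorem needs the discontinuity set of $X$ to be $\mu_N$-null at the $\alpha\to0$ step and $\mu$-null at the $N\to\infty$ step. Nothing controls this: $\mu_N$ is an inviscid limit and need not be absolutely continuous on $E_N$. The paper's one-line proof passes over both issues, so it does not supply the missing ingredient either.

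A complete argument needs $\mu$-a.e.\ continuity of $X$ along the convergence. For example, this would follow from a.s.\ convergence $\|\nabla u_N\|_{L^2}\to\|\nabla u\|_{L^2}$ together with negligibility of the switching set at $O$ for $\mu_N$ and $\mu$. Alternatively, $X$ could be redefined so that it has this property. What does pass to the limit as you describe is $M$ (continuous on $L^2$) and $F$ (continuous on $L^4$) as functionals; the difficulty lies entirely in $E$ and in the switch.
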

To finalize the proof of Proposition \ref{prop:dimension_measure}, we invoke Corollary \ref{Cor:dim}; it is clear that $X$ is locally Lipshitz, and since such maps do not increase Hausdorff dimension of compact sets, we obtain the claim.  
\subsection{Statistical ensemble}
 \begin{proposition}
We can construct a flow $\phi_t$ on $\Sigma$ ie $\phi_t:\Sigma \to \Sigma \quad u_0\mapsto u^\omega(t)$    
\end{proposition}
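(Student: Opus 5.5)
We will build $\Sigma$ as the set of initial data lying on good trajectories of the stochastic process $u$ from the main theorem. Let $\Omega_0\subset\Omega$ be the full-measure event on which all the almost sure properties established above hold:
\begin{itemize}
\item $u^\omega\in C(\mathbb{R},H^1)\cap L^{2+\delta}_{loc}W^{1,2+\delta}$;
\item $u^\omega$ solves \eqref{equation without zero frequency} in the integral sense;
\item the Yudovich quantity of Lemma~\ref{uniqueness lemma} satisfies $\int_{-T}^T\|e^{\beta^+|u^\omega|^2}-1\|_{\dot H^1}^2\,d\tau<\infty$ for every $T$;
\item $h^\omega\in L^1_{loc}$, where $h^\omega$ is the function in \eqref{estimate uniquenesss}.
\end{itemize}
We then set
\[
\Sigma:=\{u^\omega(t):\ \omega\in\Omega_0,\ t\in\mathbb{R}\}.
\]

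Full measure comes from invariance. For each fixed $t$, $\mu(\{u^\omega(t):\omega\in\Omega_0\})\ge \mathbb{P}(\Omega_0)=1$, since the law of $u(t)$ is $\mu$. Measurability is the only subtlety here. We plan to handle it by passing to a Borel (or universally measurable) subset, for instance the image of $\Omega_0$ under $\omega\mapsto u^\omega(0)$. This image is analytic, being the continuous image of a Polish-valued random variable after completing $\mathbb{P}$, so it is $\mu$-measurable with measure one. Since $\Sigma$ contains this image, $\mu(\Sigma)=1$ once we pass to the completion of $\mu$.

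The flow is then defined by $\phi_t(u_0):=u^\omega(s+t)$ whenever $u_0=u^\omega(s)$ with $\omega\in\Omega_0$. We must check three things.
\begin{enumerate}
\item \emph{Well-definedness.} Suppose $u^\omega(s)=u^{\omega'}(s')$. The time-shifted path $\tilde u(\tau):=u^{\omega'}(s'-s+\tau)$ is again a solution in the same regularity class, and the Yudovich integrability conditions are translation invariant. Hence the abstract Theorem~\ref{yudowich theorem}, run both forward and backward in time (the equation is time reversible under $u\mapsto\bar u(-t)$), gives $u^\omega(s+t)=u^{\omega'}(s'+t)$ for all $t$.
\item \emph{Invariance of $\Sigma$.} By construction $\phi_t(u^\omega(s))=u^\omega(s+t)\in\Sigma$.
\item \emph{Group property.} The relation $\phi_{t+r}=\phi_t\circ\phi_r$ is immediate from the definition.
\end{enumerate}

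$H^1$-continuity on $[-T,T]$ follows from the stability estimates of Section~\ref{1'} applied to the two representing paths. The main obstacle is that those estimates involve path-dependent constants. The quantities that must be finite are
\[
\int\|u_i\|_{W^{1,2+\delta}}^{2+\delta}\,d\tau \qquad\text{and}\qquad \int\|e^{\beta^+|u_i|^2}-1\|_{\dot H^1}^2\,d\tau,
\]
and we cannot require uniformity in $v_0$. The plan is to note that the stated limit only needs these quantities to be finite for each fixed pair of paths. The factor $\sqrt{1-\lambda}$ (respectively $\theta\sqrt{f(\epsilon)}$) is then sent to zero after $\|u(T_0)\|\to0$, and this order of limits works for each fixed path representing $u_0$. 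If genuine continuity at $u_0$ is required with constants uniform over nearby $v_0$, we would restrict $\Sigma$ to the increasing union of sets $\Sigma_R$. On $\Sigma_R$ the above integrals over $[-T,T]$ are bounded by $R$, which is a property invariant under time shifts up to an enlargement of $T$. Each $\Sigma_R$ has measure tending to one by Chebyshev's inequality together with \eqref{crucial estimate} and invariance. Continuity on each $\Sigma_R$ is then uniform, which yields the statement.
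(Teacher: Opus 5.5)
Your construction of the flow is correct in substance, but it is organized differently from the paper's.

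\textbf{The paper's route.} It follows the Kuksin--Shirikyan scheme on path space.
\begin{itemize}
\item It introduces the complete metric space $K$ of paths in $C(\mathbb{R},H^1)\cap L^{2+\delta}_{loc}W^{1,2+\delta}$ with $e^{c|u|^2}-1\in L^2_{loc}H^1$ ($c$ slightly below $\gamma/2$).
\item It takes $X$ to be the closure in $K$ of limits of Galerkin trajectories, and checks by the argument of Lemma~\ref{3'} that $K$-limits of solutions are again solutions.
\item It gets $\lambda(X)=1$ from the Skorokhod construction.
\item It sets $\Sigma=\Pi(X)$ and $\phi_t=\Pi\circ\theta_t\circ\Pi^{-1}$, where $\Pi u=u(0)$ is injective by uniqueness.
\end{itemize}

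\textbf{Your route.} You take all values of almost surely good trajectories and shift along them. Well-definedness comes from Yudovich uniqueness run forwards and, via $u\mapsto\bar u(-t)$, backwards; the backward direction is a point the paper leaves implicit. Your route skips the closure step and makes invariance of $\Sigma$ and the group law tautological, whereas the paper has to assert $\theta_tX\subset X$. In exchange, the paper's setup makes $\Sigma$ Borel: $X$ is a closed subset of the separable complete metric space $K$ and $\Pi$ is continuous and injective, so the Lusin--Souslin theorem applies, although the paper does not say so.

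\textbf{Measurability.} This is exactly where your write-up is loose. $\Omega$ is an abstract probability space and $\omega\mapsto u^\omega(0)$ is only measurable. So the image of a full-measure event need not be analytic, nor even belong to the $\mu$-completion. Work on path space instead:
\begin{itemize}
\item Let $G\subset C(\mathbb{R},H^1)$ be the set of solutions satisfying your local integrability conditions. It is Borel: the integral equation is a closed condition, and the integrability requirements are countable unions and intersections of sublevel sets of lower semicontinuous functionals.
\item Note $\lambda(G)=1$ for the law $\lambda$ of the process.
\item Put $\Sigma=\{\gamma(t):\gamma\in G,\ t\in\mathbb{R}\}$. This is a continuous image of $G\times\mathbb{R}$, hence analytic and universally measurable, and $\mu(\Sigma)\geq\lambda(G)=1$.
\end{itemize}
Alternatively, injectivity of $\gamma\mapsto\gamma(0)$ on $G$ gives a Borel set, as in the paper.

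\textbf{Continuity.} Your continuity paragraph overclaims, though this goes beyond the proposition itself. The constants in the bounds of Section~\ref{1'} depend on the trajectory through $v_0$. These are $\int(1+\|u_1\|_{H^1}+\|u_2\|_{H^1})(\|e^{\beta^+|u_1|^2}-1\|_{\dot H^1}+\|e^{\beta^+|u_2|^2}-1\|_{\dot H^1})\,d\tau$ and its $W^{1,2+\delta}$ analogue. The limit $v_0\to u_0$ runs over infinitely many trajectories, so finiteness for each fixed pair yields nothing.

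Your $\Sigma_R$ device gives continuity of each restriction $\phi_t|_{\Sigma_R}$. That does not imply continuity on $\Sigma=\bigcup_R\Sigma_R$, since a point of $\Sigma_R$ can be approached by data whose integrals are unbounded. The paper's ``group of homeomorphisms'' rests on the same non-uniform estimate, so it does not fill this gap either. You should simply not claim that your argument gives the $H^1$-continuity statement on all of $\Sigma$.
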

\begin{proof}
  First, we closely follow the construction of the Kuskin–Shirikyan dynamic flow within the framework of 2D Euler dynamics \cite{KuksinShirikyan2012}.   Let
\begin{align*}
K=\left\{
u\in C(\mathbb{R},H^1)
\cap
L^{2+\delta}_{\mathrm{loc}}(\mathbb{R};W^{1,2+\delta})
\;:\;
\left(e^{\left(\frac{\gamma}{2}-\right)|u|^2}-1\right)
\in L^2_{\mathrm{loc}}(\mathbb{R};H^1)
\right\}.
\end{align*}

We can endowed $K$ with a metric \(d_K\)
such that \((K,d_K)\) is a complete metric space. Let us define
\[
d_K(u,v)
:=
d_{K_1}(u,v)
+
\sum_{T=1}^{\infty}2^{-T}
\frac{
\left\|
e^{c|u|^2}
-
e^{c|v|^2}
\right\|_{L^2([-T,T];H^1)}
}{
1+
\left\|
e^{c|u|^2}
-
e^{c|v|^2}
\right\|_{L^2([-T,T];H^1)}
},
\]
where $c=\frac{\gamma}{2}-$ with $\gamma>4\beta$ and 
$$d_{K_1}(u,v)=\sum_{T=1}^{\infty}2^{-T}
\frac{
\displaystyle
\sup_{|t|\le T}\|u(t)-v(t)\|_{H^1}
+
\|u-v\|_{L^{2+\delta}([-T,T];W^{1,2+\delta})}
}{
\displaystyle
1+
\sup_{|t|\le T}\|u(t)-v(t)\|_{H^1}
+
\|u-v\|_{L^{2+\delta}([-T,T];W^{1,2+\delta})}
}$$
Let $(u_n)_{n\in \mathbb{N}}$ a cauchy sequence in $K$, It is easy to see that $\forall T\geq 1, u_n$ converge to $u$ in $C([-T,T];H^1)\cap L^{2+\delta}([-T,T];W^{1,2+\delta})$ and $e^{c|u_n|^2}$ converge to $g$ in $L^2([-T,T];H^1)$. Since $u_n$ converge to $u$ in $C([-T,T];H^1)$ and $H^1\hookrightarrow L^2$, then
the $L^2$- convergence implies the existence of an almost everywhere convergent subsequence, we may extract a subsequence  denoted by \(u_{n_k}\) such that
\[
u_{n_k}(t,x)\to u(t,x)
\qquad\text{a.e. on }[-T,T]\times M.
\]
Consequently,
\[
e^{c|u_{n_k}(t,x)|^2}\to e^{c|u(t,x)|^2}
\qquad\text{a.e.}
\]
On the other hand, since
\[
e^{c|u_{n_k}|^2}\to g
\quad\text{in }L^2([-T,T];H^1),
\]
we may assume, after extraction of a further subsequence, that
\[
e^{c|u_{n_k}(t,x)|^2}\to g(t,x)
\qquad\text{a.e.}
\]
Hence,
\[
g=e^{c|u|^2}
\qquad\text{a.e.}
\]
Let us recall the equation
\begin{equation}\label{E}
\left\{
\begin{aligned}
\partial_t u
&=
i\Big((\Delta-1)u
+\big(e^{\beta |u|^2}-1\big)u\Big),\\
u(0)&=u_0.
\end{aligned}
\right.
\end{equation}
Let us define 
\begin{align}
    X=\overline{\{u\in K, \exists \{u_{N_k}\}_k ~\text{satisfying} \eqref{Integ_formulat} ~~\text{such that}~~ u_{N_k}\sim \lambda_{N_k}~~\text{and} ~~u_{N_k}\to u ~~\text{in}~~Y\}}
\end{align}
where $\lambda_{N_k}$ is the lifted measure defined in \eqref{lifthing} and $Y $ is defined in \eqref{space Y}. It should be noted that any element of $X$ is solution of \eqref{E}: for the case $u\in X \setminus \partial X$, (see subsection 4.1) for the passage to the limit as $N_k\to \infty.$
Now, consider the case $u\in \partial X$: it can be checked that any sequence $\{u_n\}_n$ of  solution of \eqref{E} that converge to an element $u$ in $K$, then $u$ is also solution of \eqref{E}.
Indeed we have
\[
u_n(t)
=
u_{0,n}
+
\int_0^t
i(\Delta-1)u_n(\tau)\,d\tau
+
\int_0^t
i\big(e^{\beta |u_n(\tau)|^2}-1\big)
u_n(\tau)\,d\tau .
\]

Since \(u_n\to u\) in \(K\), by following the same arguments as in Lemma \eqref{3'}, we obtain the convergence of
\[
\int_0^t
\big(e^{\beta |u_n(\tau)|^2}-1\big)
u_n(\tau)\,d\tau
\longrightarrow
\int_0^t
\big(e^{\beta |u(\tau)|^2}-1\big)
u(\tau)\,d\tau  \qquad \text{in}\quad C_tH^{{-1}-}.
\]
which gives the claim.\\
Using Remark \ref{2'} and Skorokhod representation theorem, we have $\lambda(X)=1$.\\
Let us define linear continuous operator
\begin{align}
    \Pi:X&\to H^1\\
    u&\mapsto u(0)\nonumber
\end{align}
This operator is injective by the uniqueness property of the solution (Subsection \ref{1}), thus the operator 
\begin{align}
    \Pi: X&\to \Pi(X) \qquad \text{is bijective}\\
    u&\mapsto u(0)\nonumber
\end{align}
Since the solution $u$ is global in time, we can define:
\begin{align*}
    \theta_t: X\to X \quad u(\cdot)\mapsto u(\cdot+t)
\end{align*}
Let us set $\Sigma=\Pi(X)$, by the $H^1$ continuity with respect to the initial data (Subsection \ref{1'}),  we can then define the group of homeomorphisms:
\begin{align*}
    \phi_t=\Pi\circ\theta_t\circ\Pi^{-1}: \Sigma\to \Sigma
\end{align*}
such that $\{\phi_tu_0, t\in \mathbb{R}\}$ is the a unique global solution of \eqref{E}. It follows that $\mu(\Sigma)=1$ (and, by construction, the set $\Sigma$ is a invariant by $\phi_t$.)
\end{proof}
\section*{Acknowledgments}
The research of M. Sy is funded by the Alexander von Humboldt foundation under the “German Research Chair programme” financed by the Federal Ministry of Education and Research (BMBF). The research of F. G. Longmou-Moffo is funded by the German Academic Exchange Service (DAAD) through a research cooperation AIMS-Rwanda/Bielefeld University. 
\bibliography{ref}
\bibliographystyle{abbrv}
\end{document}